\numberwithin{equation}{section}
\newtheorem{theorem}           {Theorem}      [section]
\newtheorem{definition+} {Definition}      [section]
\newtheorem{lemma}  {Lemma}  [section]
\newtheorem{corollary}  {Corollary} [section]
\newtheorem{proposition+}  {Proposition} [section]
\newtheorem{example+}  {Example}  [section]
\newtheorem{remark+}  {Remark}  [section]
\newtheorem{problem+}  {Problem}  [section]
\newenvironment{definition}{\begin{definition+}\rm}{\end{definition+}\rm}
\newenvironment{example}{\begin{example+}\rm}{\end{example+}\rm}
\newenvironment{remark}{\begin{remark+}\rm}{\end{remark+}\rm}
\makeatletter\renewcommand{\section}{\@startsection{section}{1}{0pt}%
{-3.5ex plus -1ex minus-.2ex}{2.3ex plus .2ex}%
{\normalfont\bfseries}}\makeatother
\makeatletter\renewcommand{\subsection}{\@startsection{subsection}{1}{0pt}%
{-3.25ex plus -1ex minus-.2ex}{1.5ex plus .2ex}%
{\normalfont\bfseries}}\makeatother
\begin{document}

\begin{center}
{\large\bf MINIMIZING MEASURES ON CONDENSERS\\ OF~INFINITELY MANY
PLATES}\makeatletter{\renewcommand*{\@makefnmark}{}
\footnotetext{{\it {\rm 2000} Mathematics Subject Classification}:
31C15.}\makeatother}\makeatletter{\renewcommand*{\@makefnmark}{}
\footnotetext{{\it Key words and phrases:} Minimal energy problems,
interior capacities of condensers, minimizing measures, consistent
and perfect kernels, completeness theorem for signed Radon
measures.}\makeatother}
\end{center}

\begin{center}
{\it Natalia Zorii}
\end{center}

\begin{abstract}
The study deals with the interior capacities of condensers in a
locally compact space, a condenser being treated here as a
countable, locally finite collection of closed sets~$A_i$, $i\in I$,
with the sign~$+1$ or~$-1$ prescribed such that the oppositely
signed sets are mutually disjoint. We are concerned with the minimal
energy problem over the class of linear combinations $\sum_{i\in
I}\,({\rm sign}\,A_i)\,\mu^i$, where $\mu^i$ is a nonnegative Radon
measure supported by~$A_i$ and normalized by $\int g_i\,d\mu^i=a_i$,
$g_i$ and $a_i$ being given. For all positive definite kernels
satisfying Fuglede's condition of consistency between the strong and
vague ($={}$weak$*$) topologies, we establish sufficient conditions
for the existence of minimizers and analyze properties of their
uniqueness, compactness, and continuity.\end{abstract}

\section{Introduction}The present work is devoted to further development of
the theory of interior capacities of condensers in a locally compact
space. A condenser will be treated here as a countable, locally
finite collection~$\mathbf A$ of closed sets $A_i$, $i\in I$, with
the sign~$+1$ or~$-1$ prescribed such that the oppositely signed
sets are mutually disjoint.

For a background of the theory for condensers
of finitely many plates we refer the reader to~\cite{GR0,GR},
\cite[Chap.~VIII]{ST}, and \cite{Z2}--\cite{Z6}; see
also~\cite[Chap.~5]{NS} and~\cite{O}, where the condensers were
additionally assumed to be compact.

The reader is expected to be familiar with the principal notions and
results of the theory of measures and integration on a locally
compact space; its exposition can be found in~\cite{B2,B3,E2} (see
also~\cite{F1,Z3} for a brief survey).

In all that follows, $\mathrm X$ denotes a locally compact Hausdorff
space and $\mathfrak M=\mathfrak M(\mathrm X)$ the linear space of
all real-valued Radon measures~$\nu$ on~$\mathrm X$ equipped with
the {\it vague\/} ($={}${\it weak}$*$) topology, i.\,e., the
topology of pointwise convergence on the class $\mathrm C_0(\mathrm
X)$ of all real-valued continuous functions~$\varphi$ on~$\mathrm X$
with compact support.

A {\it kernel\/}~$\kappa$ on $\mathrm X$ is meant to be an element
from $\mathrm\Phi(\mathrm X\times\mathrm X)$, where
$\mathrm\Phi(\mathrm Y)$ consist of all lower semicontinuous
functions~$\psi:\mathrm Y\to(-\infty,\infty]$ such that
$\psi\geqslant0$ unless $\mathrm Y$ is compact. Given
$\nu,\,\nu_1\in\mathfrak M$, the {\it mutual energy\/} and the {\it
potential\/} with respect to a kernel~$\kappa$ are defined
respectively by
$$\kappa(\nu,\nu_1):=\int\kappa(x,y)\,d(\nu\otimes\nu_1)(x,y)\quad\mbox{and}\quad
\kappa(\,\cdot\,,\nu):=\int\kappa(\,\cdot\,,y)\,d\nu(y).$$ (Here and
in the sequel, when introducing notation, we always tacitly assume
the corresponding object on the right to be well defined.) For
$\nu=\nu_1$ the mutual energy $\kappa(\nu,\nu_1)$ defines the {\it
energy\/} of~$\nu$. Let $\mathcal E$ consist of all $\nu\in\mathfrak
M$ with $-\infty<\kappa(\nu,\nu)<\infty$.

We shall be mainly concerned with a {\it positive definite\/}
kernel~$\kappa$, which means that it is symmetric (i.\,e.,
$\kappa(x,y)=\kappa(y,x)$ for all $x,\,y\in\mathrm X$) and the
energy $\kappa(\nu,\nu)$, $\nu\in\mathfrak M$, is nonnegative
whenever defined. Then $\mathcal E$ forms a pre-Hil\-bert space with
the scalar product $\kappa(\nu,\nu_1)$ and the seminorm
$\|\nu\|_\mathcal E:=\sqrt{\kappa(\nu,\nu)}$ (see~\cite{F1}). A
positive definite kernel~$\kappa$ is called {\it strictly positive
definite\/} if the seminorm $\|\,\cdot\,\|_\mathcal E$ is a norm.

Let $\mathfrak M^+(E)$ consist of all nonnegative measures
$\nu\in\mathfrak M$ supported by~$E$, where $E$ is a given closed
subset of~$\mathrm X$, and let $\mathcal E^+(E):=\mathfrak
M^+(E)\cap\mathcal E$. We also write $\mathfrak M^+:=\mathfrak
M^+(\mathrm X)$ and $\mathcal E^+:=\mathcal E^+(\mathrm X)$.

Given a condenser $\mathbf{A}=(A_i)_{i\in I}$, we consider the class
$\mathfrak M(\mathbf{A})$ of all {\it linear combinations\/}
$\mu=\sum_{i\in I}\,\alpha_i\mu^i$, where $\alpha_i:={\rm
sign}\,A_i$ and $\mu^i\in\mathfrak M^+(A_i)$, equipped with a
relation of identity and a topology so that it becomes {\it
homeomorphic\/} to the product space $\prod_{i\in I}\,\mathfrak
M^+(A_i)$ (where every $\mathfrak M^+(A_i)$ is endowed with the
vague topology). We call the corresponding topology on $\mathfrak
M(\mathbf{A})$ the $\mathbf{A}$-{\it vague\/} topology.

To introduce a proper notion of {\it energy\/} $\kappa(\mu,\mu)$ of
$\mu\in\mathfrak M(\mathbf{A})$, we observe that, due to the local
finiteness of a condenser, there is a unique Radon measure
$R\mu\in\mathfrak M$ such that $R\mu(\varphi)=\sum_{i\in
I}\,\alpha_i\mu^i(\varphi)$ for all $\varphi\in\mathrm C_0(\mathrm
X)$. Therefore, it is reasonable to set
$\kappa(\mu,\mu):=\kappa(R\mu,R\mu)$. This notion can {\it
equivalently\/} be defined also by
\begin{equation*}\label{vectoren}\kappa(\mu,\mu)=\sum_{i,j\in
I}\,\alpha_i\alpha_j\,\kappa(\mu^i,\mu^j)\end{equation*}
(see~Sec.~\ref{sec:ensum}), which is in agreement with an
electrostatic interpretation of a condenser. Let $\mathcal
E(\mathbf{A})$ consist of all $\mu\in\mathfrak M(\mathbf{A})$ whose
energy is finite.

Having fixed a vector-valued function $\mathbf{g}=(g_i)_{i\in I}$,
where all $g_i:\mathrm X\to(0,\infty)$ are continuous, and a
numerical vector $\mathbf{a}=(a_i)_{i\in I}$ with $a_i>0$, we next
define the {\it interior capacity\/} of a condenser~$\mathbf{A}$
(with respect to~$\kappa$, $\mathbf{a}$, and~$\mathbf{g}$) as
$1\bigl/\inf\,\kappa(\mu,\mu)$, the infimum being taken over all
$\mu\in\mathcal E(\mathbf{A})$ normalized by $\int g_i\,d\mu^i=a_i$,
\mbox{$i\in I$}. Along with its electrostatic interpretation, such a
notion has found various important applications to approximation
theory, geometrical function theory, and potential theory itself
(see~the books~\cite{H,NS,ST} and the references cited therein).

The main question we shall be interested in is whether
minimizers~$\lambda_{\mathbf{A}}$ in the above minimal energy
problem exist. If $\mathbf{A}$ is finite, all $A_i$ are compact,
while $\kappa(x,y)$ is continuous on $A_i\times A_j$ whenever
$\alpha_i\ne\alpha_j$, then the existence of
those~$\lambda_{\mathbf{A}}$ can be easily established by exploiting
the $\mathbf{A}$-vague topology only (see~\cite[Th.~2.30]{O};
cf.~also~\cite{GR0,GR,NS,ST}, related to the logarithmic kernel in
the plane). However, the question becomes rather nontrivial if any
of these three assumptions is dropped.

To solve the problem on the existence of
minimizers~$\lambda_{\mathbf{A}}$ in the general case where
$\mathbf{A}$ is infinite and (or) noncompact, we restrict ourselves
to positive definite kernels and work out an approach based on the
following arguments.

The set $\mathcal E(\mathbf{A})$ forms a semimetric space with the
semimetric \[\|\mu_1-\mu_2\|_{\mathcal
E(\mathbf{A})}:=\|R\mu_1-R\mu_2\|_\mathcal E=\Bigl[\sum_{i,j\in
I}\,\alpha_i\alpha_j\,\kappa(\mu^i_1-\mu^i_2,\mu^j_1-\mu^j_2)\Bigr]^{1/2},\]
and, for rather general $\kappa$, $\mathbf{g}$, and~$\mathbf{a}$,
the topological subspace of $\mathcal E(\mathbf{A})$ consisting of
all $\mu$ with $\int g_i\,d\mu^i\leqslant a_i$, $i\in I$, is {\it
complete\/} (see Sec.~\ref{sec:strong}).

Using these arguments, we obtain sufficient conditions for the
existence of minimizers~$\lambda_{\mathbf{A}}$ and establish
statements on their uniqueness, $\mathbf{A}$-vague compactness, and
continuity under exhaustion of~$\mathbf{A}$ by~$\mathbf{K}$ with
compact~$K_i$, $i\in I$. See~Sec.~\ref{sec:5}.

The results obtained and the approach applied develop and generalize
the corresponding ones from the author's articles~\cite{Z3,Z4,Z6},
related to the condensers of finitely many plates.

\section{Preliminaries: topologies, consistent and perfect
kernels}\label{sec:2}

From now on, the kernel under consideration is always assumed to be
positive definite. In addition to the {\it strong\/} topology
on~$\mathcal E$, determined by the seminorm
$\|\cdot\|:=\|\cdot\|_\mathcal E$, it is often useful to consider
the {\it weak\/} topology on~$\mathcal E$, defined by means of the
seminorms $\nu\mapsto|\kappa(\nu,\mu)|$, $\mu\in\mathcal E$
(see~\cite{F1}). The Cauchy-Schwarz inequality
\begin{equation*}
|\kappa(\nu,\mu)|\leqslant\|\nu\|\,\|\mu\|,\quad\mbox{where
$\nu,\,\mu\in\mathcal E$,}\end{equation*} implies immediately that
the strong topology on $\mathcal E$ is finer than the weak one.

In~\cite{F1,F2}, B.~Fuglede introduced the following two {\it
equivalent\/} properties of consistency between the induced strong,
weak, and vague topologies on~$\mathcal E^+$:
\begin{itemize}
\item[\rm(C$_1$)] {\it Every strong Cauchy net in
$\mathcal E^+$ converges strongly to every its vague cluster point;}
\item[\rm(C$_2$)] {\it Every strongly bounded and vaguely convergent net in
$\mathcal E^+$ converges weakly to the vague limit.}
\end{itemize}

\begin{definition}
Following Fuglede~\cite{F1}, we call a kernel~$\kappa$ {\it
consistent} if it satisfies either of the properties~(C$_1$)
and~(C$_2$), and {\it perfect\/} if, in addition, it is strictly
positive definite.\end{definition}

\begin{remark} One has to consider {\it nets\/} or {\it filters\/}
in~$\mathfrak M^+$ instead of sequences, since the vague topology in
general does not satisfy the first axiom of countability. We follow
Moore's and Smith's theory of convergence, based on the concept of
nets (see~\cite{MS}; cf.~also~\cite[Chap.~0]{E2} and
\cite[Chap.~2]{K}). However, if $\mathrm X$ is metrizable and
countable at infinity, then $\mathfrak M^+$ satisfies the first
axiom of countability (see~\cite[Lemma~1.2.1]{F1}) and the use of
nets may be avoided.\end{remark}

\begin{theorem}[Fuglede \cite{F1}]\label{th:1} A kernel $\kappa$ is perfect if and
only if $\mathcal E^+$ is strongly complete and the strong topology
on~$\mathcal E^+$ is finer than the vague one.\end{theorem}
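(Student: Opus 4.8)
The plan is to prove the two implications separately. The substantive direction is that a perfect kernel has the two stated properties, and the only nontrivial ingredient it requires is a vague relative compactness lemma: \emph{if $\kappa$ is perfect, then every strongly bounded subset $H\subseteq\mathcal E^+$ is vaguely relatively compact in $\mathfrak M^+$.} I would prove this by contradiction, using the standard criterion that $H\subseteq\mathfrak M^+$ is vaguely relatively compact precisely when $\sup_{\nu\in H}\int\varphi\,d\nu<\infty$ for every $\varphi\in\mathrm C_0^+(\mathrm X)$. If this failed for some $\varphi\not\equiv0$ with compact support $K$, one could pick $\nu_n\in H$ with $\nu_n(K)\to\infty$. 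When $\mathrm X$ is noncompact, so that $\kappa\geqslant0$, the restrictions $\mu_n:=\nu_n|_K$ again lie in $\mathcal E^+$ and satisfy $\|\mu_n\|\leqslant\|\nu_n\|$ (monotonicity of energy in $0\leqslant\mu\leqslant\nu$), hence $p_n:=\mu_n/\mu_n(K)$ is a probability measure carried by the compact set $K$ with $\|p_n\|\leqslant\|\nu_n\|/\mu_n(K)\to0$; when $\mathrm X$ is compact one argues identically with $p_n:=\nu_n/\nu_n(\mathrm X)$, no passage to restrictions being needed. Since the probability measures carried by the fixed compact $K$ form a vaguely compact family, $(p_n)$ has a vague cluster point $p$ with $p(\mathrm X)=1$, in particular $p\neq0$. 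On the other hand, by the vague lower semicontinuity of $\nu\mapsto\kappa(\nu,\nu)$ on $\mathfrak M^+$ (a standard consequence of $\kappa$ being lower semicontinuous and bounded below) one gets $\|p\|^2\leqslant\liminf\|p_n\|^2=0$, so $p=0$ by strict positive definiteness --- a contradiction. (Property (C$_2$) may be used in place of the lower semicontinuity here.)

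Granting the lemma, assume $\kappa$ is perfect. To see that $\mathcal E^+$ is strongly complete, let $(\nu_\alpha)\subseteq\mathcal E^+$ be strongly Cauchy; it is strongly bounded, hence by the lemma has a vague cluster point $\nu$, which lies in $\mathcal E^+$ (its energy being finite by vague lower semicontinuity), and then (C$_1$) yields $\nu_\alpha\to\nu$ strongly. To see that the strong topology on $\mathcal E^+$ is finer than the vague one, suppose $\nu_\alpha\to\nu$ strongly but not vaguely; passing to a subnet I may fix $\varphi\in\mathrm C_0(\mathrm X)$ and $\varepsilon>0$ with $|\nu_\beta(\varphi)-\nu(\varphi)|\geqslant\varepsilon$ for all $\beta$. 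This subnet is still strongly Cauchy, hence strongly bounded, and so by the lemma has a vague cluster point $\nu'$; along the corresponding sub-subnet $(\nu_\gamma)$ one has $\nu_\gamma\to\nu'$ vaguely and $\nu_\gamma\to\nu$ strongly, whence (C$_1$) forces $\nu_\gamma\to\nu'$ strongly, and uniqueness of strong limits (strict positive definiteness) gives $\nu'=\nu$, contradicting $|\nu_\gamma(\varphi)-\nu(\varphi)|\geqslant\varepsilon$.

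For the converse, assume $\mathcal E^+$ is strongly complete and its strong topology is finer than the vague one. Being finer than a Hausdorff topology, the strong topology on $\mathcal E^+$ is itself Hausdorff, so $\|\cdot\|$ separates the points of $\mathcal E^+$; writing $\nu\in\mathcal E$ as $\nu^+-\nu^-$ with $\nu^\pm\in\mathcal E^+$, the relation $\|\nu\|=0$ then forces $\nu^+=\nu^-$, i.e.\ $\nu=0$, so $\kappa$ is strictly positive definite. For consistency, let $(\nu_\alpha)\subseteq\mathcal E^+$ be strongly Cauchy; by completeness $\nu_\alpha\to\nu$ strongly for some $\nu\in\mathcal E^+$, hence $\nu_\alpha\to\nu$ vaguely, and since the vague topology is Hausdorff $\nu$ is the only vague cluster point of the net, so (C$_1$) holds. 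Thus $\kappa$ is perfect.

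The main difficulty is concentrated in the lemma, specifically in forcing a non-vaguely-bounded family down to probability measures on a \emph{fixed} compact set; this is what the restriction $\nu_n|_K$, together with the monotonicity $\|\mu\|\leqslant\|\nu\|$ for $0\leqslant\mu\leqslant\nu$, accomplishes when $\mathrm X$ is noncompact (and a plain normalization does when $\mathrm X$ is compact), after which strict positive definiteness, consistency, and the vague compactness of the space of probability measures on $K$ finish the argument. Everything else reduces to routine manipulation of Cauchy nets in a semimetric space.
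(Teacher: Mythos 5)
Note first that the paper does not prove Theorem~\ref{th:1}: it is quoted from Fuglede \cite{F1} without proof, so there is no in-paper argument to compare yours with, and I judge your proposal on its own terms. It is essentially correct, and it is in substance a reconstruction of the classical argument. The whole weight rests on your lemma that, for a perfect kernel, every strongly bounded subset of $\mathcal E^+$ is vaguely bounded (hence vaguely relatively compact); your contradiction argument amounts to re-proving that $C(K)<\infty$ for every compact $K$ when $\kappa$ is strictly positive definite (the fact recorded in the footnote to Theorem~\ref{exist}), and the restriction-plus-normalization device, the monotonicity of energy under $0\leqslant\mu\leqslant\nu$ for $\kappa\geqslant0$, and the dichotomy ``either $\mathrm X$ is compact or $\kappa\geqslant0$'' are all sound. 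Granting the lemma, the deduction of strong completeness and of ``strong finer than vague'' from (C$_1$) plus strict positive definiteness is correct, and so is the converse: a topology finer than a Hausdorff one is Hausdorff, which through the Jordan decomposition gives that $\|\cdot\|$ is a norm, while completeness plus ``finer'' gives (C$_1$) because a vaguely convergent net has its vague limit as unique vague cluster point.

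Three places should be tightened, none structural. (i) A strong Cauchy \emph{net} need not be strongly bounded; only a tail $\{\alpha:\alpha\geqslant\alpha_0\}$ is, so pass to such a tail before invoking your lemma (Cauchyness then transfers strong convergence of the tail back to the whole net); the paper performs the same reduction explicitly in the proof of Theorem~\ref{th:strong}, and the same remark applies to the subnet in your ``finer'' argument. (ii) The parenthetical claim that $\nu\mapsto\kappa(\nu,\nu)$ is vaguely lower semicontinuous on $\mathfrak M^+$ merely because $\kappa$ is l.s.c.\ and bounded below is not valid on a noncompact space; it is valid in the two cases you actually use --- $\kappa\geqslant0$, via Lemma~\ref{lemma:lower} applied on $\mathrm X\times\mathrm X$, and $\mathrm X$ compact, where one must also use that total mass is vaguely continuous --- and in either case one needs $p_\gamma\otimes p_\gamma\to p\otimes p$ vaguely, which holds here because all the $p_n$ are probability measures carried by one fixed compact set. (iii) In the converse direction, the step $\nu\in\mathcal E\Rightarrow\nu^\pm\in\mathcal E^+$ deserves a word: well-definedness and finiteness of $\kappa(\nu,\nu)$ force the mutual energies $\kappa(\nu^+,\nu^+)$, $\kappa(\nu^-,\nu^-)$, $\kappa(\nu^+,\nu^-)$ to be finite, and only then does the Hausdorffness of the strong topology on $\mathcal E^+$ yield strict positive definiteness. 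With these glosses supplied, your proof is complete.
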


\begin{example} In $\mathbb R^n$, $n\geqslant 3$, the
Newtonian kernel $|x-y|^{2-n}$ is perfect~\cite{Car}. So are the
Riesz kernel $|x-y|^{\alpha-n}$, $0<\alpha<n$, in~$\mathbb R^n$,
$n\geqslant2$~\cite{D1, D2}, and the restriction of the logarithmic
kernel $-\log\,|x-y|$ in $\mathbb R^2$ to an open unit ball
(see~\cite{L}). Furthermore, if $D$ is an open set in~$\mathbb R^n$,
$n\geqslant 2$, and its generalized Green function~$g_D$ exists
(see, e.\,g.,~\cite[Th.~5.24]{HK}), then $g_D$ is perfect as
well~\cite{E1}.\end{example}

\begin{remark} As is seen from the above definitions and Theorem~\ref{th:1}, the concept of consistent
or perfect kernels is an efficient tool in minimal energy problems
over {\it nonnegative\/} Radon measures with finite energy. Indeed,
the theory of capacities of {\it sets\/} has been developed
in~\cite{F1} exactly for those kernels. We shall show that this
concept is efficient, as well, in minimal energy problems over
(finite or infinite) {\it linear combinations\/} $\mu\in\mathcal
E(\mathbf{A})$, $\mathbf{A}$ being a condenser. This is guaranteed
by the completeness of proper topological subspaces of~$\mathcal
E(\mathbf{A})$, established in Sec.~\ref{sec:strong}
below.\end{remark}

\section{Condensers, associated measures, energies and
potentials}\label{sec:3} We start by introducing and discussing
basic concepts of the theory of interior capacities of condensers,
some of which have already been briefly mentioned in the
Introduction (cf.~also~\cite{Z.arh}).

\subsection{Condensers}
Let $I^+$ and $I^-$ be countable (finite or infinite) disjoint sets
of indices~$i\in\mathbb N$, where the latter is allowed to be empty,
and let $I$ denote their union. Assume that to every $i\in I$ there
corresponds a nonempty, closed set~$A_i\subset\mathrm X$.

\begin{definition} A collection $\mathbf{A}=(A_i)_{i\in I}$ is
called an $(I^+,I^-)$-{\it condenser\/} (or simply a {\it
condenser\/}) in~$\mathrm X$ if every compact subset of~$\mathrm X$
intersects with at most finitely many~$A_i$ and
\begin{equation}
A_i\cap A_j=\varnothing\quad\mbox{for all \ } i\in I^+, \ j\in I^-.
\label{non}
\end{equation}\end{definition}

The sets $A_i$, $i\in I^+$, and $A_j$, $j\in I^-$, are called the
{\it positive\/} and, respectively, {\it negative plates\/} of the
con\-den\-ser $\mathbf A$. Note that any two equally sign\-ed plates
can intersect each other. Given $I^+$ and $I^-$, let $\mathfrak
C=\mathfrak C(I^+,I^-)$ be the class of all
$(I^+,I^-)$-con\-den\-sers in~$\mathrm X$. A condenser
$\mathbf{A}\in\mathfrak C$ is said to be {\it compact\/} if so are
all~$A_i$, $i\in I$, and {\it finite\/} if $I$ is finite. Also the
following notation will be used:
\begin{equation*}
A^+:=\bigcup_{i\in I^+}\,A_i,\qquad A^-:=\bigcup_{i\in I^-}\,A_i.
\end{equation*} Observe that $A^+$
and~$A^-$ might both be noncompact even for a compact~$\mathbf{A}$.

\subsection{Measures associated with a condenser. $\mathbf{A}$-vague topology}
With the preceding notation, write
\begin{equation*}
\alpha_i:=\left\{
\begin{array}{rll} +1 & \mbox{if} & i\in I^+,\\ -1 & \mbox{if} & i\in
I^-.\\ \end{array} \right. \end{equation*} Given
$\mathbf{A}\in\mathfrak C$, let $\mathfrak M(\mathbf{A})$ consist of
all (finite or infinite) {\it linear combinations\/}
\begin{equation*}\mu:=\sum_{i\in I}\,\alpha_i\mu^i,\quad\mbox{where
\ }\mu^i\in\mathfrak M^+(A_i).\end{equation*} Any two $\mu_1$ and
$\mu_2$ in $\mathfrak M(\mathbf{A})$ are regarded to be {\it
identical\/} ($\mu_1\equiv\mu_2$) if and only if $\mu_1^i=\mu_2^i$
for all $i\in I$. Then, under the relation of identity thus defined,
the following correspondence is {\it one-to-one\/}:
\begin{equation*}\mathfrak M(\mathbf{A})\ni\mu\mapsto(\mu^i)_{i\in
I}\in \prod_{i\in I}\mathfrak M^+(A_i).\end{equation*} We call
$\mu\in\mathfrak M(\mathbf{A})$ a {\it measure associated
with\/}~$\mathbf A$, and $\mu^i$ its $i$-{\it coordinate}.

For measures associated with a condenser, it is therefore natural to
introduce the following concept of convergence, actually
corresponding to the vague convergence by coordinates. Let $S$
denote a directed set of indices, and let $\mu_s$, $s\in S$, and
$\mu_0$ be given elements of the class~$\mathfrak M(\mathbf A)$.

\begin{definition} A net $(\mu_s)_{s\in S}$ is said to converge to
$\mu_0$ $\mathbf A$-{\it va\-gue\-ly\/} if
\begin{equation*}\mu^i_s\to\mu_0^i\quad\mbox{vaguely for all \ }i\in
I.\end{equation*}\end{definition}

Then $\mathfrak M(\mathbf A)$, equipped with the topology of
$\mathbf A$-vague convergence, becomes {\it homeomorphic\/} to the
product space $\prod_{i\in I}\,\mathfrak M^+(A_i)$, where every
$\mathfrak M^+(A_i)$ is endowed with the vague topology. Since
$\mathfrak M(\mathrm X)$ is Hausdorff, so are both the spaces
$\mathfrak M(\mathbf A)$ and $\prod_{i\in I}\mathfrak M^+(A_i)$
(see, e.\,g.,~\cite[Chap.~3, Th.~5]{K}).

Similarly, a set $\mathfrak F\subset\mathfrak M(\mathbf A)$ is
called $\mathbf A$-{\it vaguely bounded\/} if all its
$i$-projections are vaguely bounded~--- that is, if for every
$\varphi\in\mathrm C_0(\mathrm X)$ and every $i\in I$,
\[\sup_{\mu\in\mathfrak F}\,|\mu^i(\varphi)|<\infty.\]

\begin{lemma}\label{lem:vaguecomp} Any $\mathbf A$-vaguely bounded part of $\mathfrak
M(\mathbf A)$ is $\mathbf A$-vaguely relatively compact.\end{lemma}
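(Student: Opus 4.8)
The plan is to transfer the statement, through the homeomorphism $\mathfrak M(\mathbf A)\cong\prod_{i\in I}\mathfrak M^+(A_i)$ recorded above, to the one-factor situation and then to reassemble by Tychonoff's theorem.

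First I would recall the classical fact that a vaguely bounded subset of $\mathfrak M^+$ is vaguely relatively compact (the counterpart of the Banach--Alaoglu theorem for Radon measures; see, e.g., \cite{B2} or \cite{F1}). Granting this, let $\mathfrak F\subset\mathfrak M(\mathbf A)$ be $\mathbf A$-vaguely bounded and, for each $i\in I$, write $\mathfrak F_i:=\{\mu^i:\mu\in\mathfrak F\}\subset\mathfrak M^+(A_i)$ for its $i$-projection. By the very definition of $\mathbf A$-vague boundedness, each $\mathfrak F_i$ is vaguely bounded in $\mathfrak M^+$, hence vaguely relatively compact there. Since $A_i$ is closed, $\mathfrak M^+(A_i)$ is a vaguely closed subset of $\mathfrak M^+$ (a vague limit of measures carried by $A_i$ is again carried by $A_i$), so the vague closure $K_i:=\overline{\mathfrak F_i}$ is a vaguely compact subset of $\mathfrak M^+(A_i)$.

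Next, by Tychonoff's theorem the product $K:=\prod_{i\in I}K_i$ is compact in $\prod_{i\in I}\mathfrak M^+(A_i)$, hence, under the homeomorphism onto $\mathfrak M(\mathbf A)$, corresponds to an $\mathbf A$-vaguely compact subset of $\mathfrak M(\mathbf A)$. Because $\mu^i\in\mathfrak F_i\subset K_i$ for every $\mu\in\mathfrak F$ and every $i\in I$, we have $\mathfrak F\subset K$; and since $\mathfrak M(\mathbf A)$ is Hausdorff, the $\mathbf A$-vague closure of $\mathfrak F$ is a closed subset of the compact set $K$, therefore itself compact. Thus $\mathfrak F$ is $\mathbf A$-vaguely relatively compact, as claimed.

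The only substantive ingredient is the single-factor statement quoted at the start --- that vague boundedness of a family of nonnegative Radon measures already forces vague relative compactness; the rest is a formal consequence of Tychonoff's theorem and of the product description of the $\mathbf A$-vague topology. Accordingly I do not expect any real obstacle beyond invoking that classical fact and noting the routine vague closedness of $\mathfrak M^+(A_i)$ in $\mathfrak M^+$.
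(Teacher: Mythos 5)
Your proof is correct and follows essentially the same route as the paper: the classical fact that vaguely bounded (closed) sets of Radon measures are vaguely (relatively) compact, combined with Tychonoff's theorem via the product description of the $\mathbf A$-vague topology. The paper merely states this more tersely, citing Bourbaki for the one-factor compactness, while you spell out the projections, the vague closedness of $\mathfrak M^+(A_i)$, and the passage to the product of closures.
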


\begin{proof} Since by~\cite[Chap.~III, \S~2, Prop.~9]{B2} any
vaguely bounded and closed part of~$\mathfrak M$ is vaguely compact,
the lemma follows from Tychonoff's theorem on the product of compact
spaces (see, e.\,g.,~\cite[Chap.~5, Th.~13]{K}).\end{proof}

\subsection{Mapping $R:\mathfrak M(\mathbf{A})\to\mathfrak M$.
Relation of equivalency on $\mathfrak M(\mathbf{A})$}
Since each compact subset of~$\mathrm X$ intersects with at most
finitely many~$A_i$, for every $\varphi\in\mathrm C_0(\mathrm X)$
only a finite number of~$\mu^i(\varphi)$ (where $\mu\in\mathfrak
M(\mathbf{A})$ is given), are nonzero. This yields that to every
$\mu\in\mathfrak M(\mathbf{A})$ there corresponds a unique Radon
measure~$R\mu$ such that
\begin{equation*}
R\mu(\varphi)=\sum_{i\in I}\,\alpha_i\mu^i(\varphi)\quad\mbox{for
all \ }\varphi\in\mathrm C_0(\mathrm X);
\end{equation*}
due to (\ref{non}), positive and negative parts in Jordan's
decomposition of~$R\mu$ can respectively be written in the form
\begin{equation*} R\mu^+=\sum_{i\in I^+}\,\mu^i\quad\mbox{and}\quad
R\mu^-=\sum_{i\in I^-}\,\mu^i.\end{equation*}

Of course, the mapping~$R:\mathfrak M(\mathbf{A})\to\mathfrak M$
thus defined is in general non-injective, i.\,e., one may choose
$\mu_1,\,\mu_2\in\mathfrak M(\mathbf{A})$ so that
$\mu_1\not\equiv\mu_2$, while $R\mu_1=R\mu_2$. We call
$\mu_1,\,\mu_2\in\mathfrak M(\mathbf{A})$ {\it equivalent
in\/}~$\mathfrak M(\mathbf{A})$, and write $\mu_1\cong\mu_2$, if
their $R$-images coincide~--- or, which is equivalent, whenever
$\sum_{i\in I}\,\mu_1^i=\sum_{i\in I}\,\mu_2^i$.

Observe that the relation of equivalency in $\mathfrak
M(\mathbf{A})$ implies that of identity (and, hence, these two
relations on~$\mathfrak M(\mathbf{A})$ are actually equivalent) if
and only if all~$A_i$, $i\in I$, are mutually disjoint.

\begin{lemma}\label{lem:vague} The $\mathbf A$-vague convergence of
$(\mu_s)_{s\in S}$ to~$\mu_0$ implies the vague convergence of
$(R\mu_s)_{s\in S}$ to~$R\mu_0$.\end{lemma}

\begin{proof} This is obvious since the support of any
$\varphi\in\mathrm C_0(\mathrm X)$ might have points in common with
only a finite number of~$A_i$.\end{proof}

\begin{remark} Lemma~\ref{lem:vague} in general can not be
inverted. However, if all $A_i$, $i\in I$, are mutually disjoint,
then the vague convergence of $(R\mu_s)_{s\in S}$ to~$R\mu_0$
implies the $\mathbf A$-vague convergence of $(\mu_s)_{s\in S}$
to~$\mu_0$. This can be seen by using the Tietze-Urysohn extension
theorem (see, e.\,g.,~\cite[Th.~0.2.13]{E2}).\end{remark}

\subsection{Energies and potentials of measures associated with a
condenser}\label{sec:ensum} To introduce energies and potentials of
linear combinations $\mu\in\mathfrak M(\mathbf{A})$, we start with
the following two lemmas, the former one being well known (see,
e.\,g., \cite{F1}).

\begin{lemma}\label{lemma:lower}
Let $\mathrm Y$ be a locally compact Hausdorff space. If
$\psi\in\mathrm\Phi(\mathrm Y)$ is given, then the map
$\nu\mapsto\int\psi\,d\nu$ is vaguely lower semicontinuous on
$\mathfrak M^+(\mathrm Y)$.\end{lemma}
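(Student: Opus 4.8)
The plan is to reduce the assertion to the standard description of the integral of a lower semicontinuous function against a Radon measure, combined with the elementary fact that an upper envelope of continuous functions is lower semicontinuous. I would first treat the case $\psi\geqslant0$, which covers every situation except when $\mathrm Y$ is compact. Here the key classical observation is that such a $\psi\in\mathrm\Phi(\mathrm Y)$ is the pointwise supremum of the family $\Phi_\psi:=\{\varphi\in\mathrm C_0(\mathrm Y):0\leqslant\varphi\leqslant\psi\}$. This family is upward directed, since $\max(\varphi_1,\varphi_2)\in\Phi_\psi$ whenever $\varphi_1,\varphi_2\in\Phi_\psi$; and $\psi(x)=\sup_{\varphi\in\Phi_\psi}\varphi(x)$ for every $x\in\mathrm Y$, because for any $c$ with $0\leqslant c<\psi(x)$ lower semicontinuity makes $\{\psi>c\}$ an open neighbourhood of~$x$, while local compactness together with Urysohn's lemma yields a $\varphi\in\mathrm C_0(\mathrm Y)$ with $0\leqslant\varphi\leqslant c$, $\varphi(x)=c$, and $\mathrm{supp}\,\varphi\subset\{\psi>c\}$, whence $\varphi\leqslant\psi$ on all of~$\mathrm Y$. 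Consequently, for every $\nu\in\mathfrak M^+(\mathrm Y)$,
\[\int\psi\,d\nu=\sup_{\varphi\in\Phi_\psi}\int\varphi\,d\nu,\]
this being precisely the way the integral of a nonnegative lower semicontinuous function with respect to a Radon measure is defined.

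For each fixed $\varphi\in\mathrm C_0(\mathrm Y)$ the functional $\nu\mapsto\int\varphi\,d\nu=\nu(\varphi)$ is vaguely continuous on $\mathfrak M^+(\mathrm Y)$ by the very definition of the vague topology. Since the pointwise supremum of an arbitrary family of continuous real-valued functions is lower semicontinuous, the displayed identity shows that $\nu\mapsto\int\psi\,d\nu$ is vaguely lower semicontinuous on $\mathfrak M^+(\mathrm Y)$, which settles the case $\psi\geqslant0$.

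It remains to handle the exceptional case in which $\mathrm Y$ is compact, where $\psi$ need not be nonnegative; being lower semicontinuous on a compact space it is nevertheless bounded below, say $\psi\geqslant-M$ for some $M>0$. Applying the previous two paragraphs to the nonnegative function $\psi+M\in\mathrm\Phi(\mathrm Y)$ shows that $\nu\mapsto\int(\psi+M)\,d\nu$ is vaguely lower semicontinuous; and since $1\in\mathrm C_0(\mathrm Y)$ when $\mathrm Y$ is compact, the map $\nu\mapsto\nu(\mathrm Y)=\int1\,d\nu$ is vaguely continuous. Hence $\nu\mapsto\int\psi\,d\nu=\int(\psi+M)\,d\nu-M\,\nu(\mathrm Y)$ is the sum of a vaguely lower semicontinuous function and a vaguely continuous one, and therefore vaguely lower semicontinuous. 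The argument carries no genuine difficulty; the only points deserving attention are the verification that $\int\psi\,d\nu$ indeed equals $\sup_{\varphi\in\Phi_\psi}\int\varphi\,d\nu$ (a property of Radon integration of lower semicontinuous functions) and the separate treatment of the compact case, where the shift by the constant $M$ is needed to reach the nonnegative situation.
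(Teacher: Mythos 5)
Your proof is correct. The paper itself gives no argument for this lemma --- it is stated as well known with a reference to Fuglede~\cite{F1} --- and what you have written is precisely the standard proof underlying that citation: for $\psi\geqslant0$ the integral is the upper envelope $\sup_{\varphi}\nu(\varphi)$ over $\varphi\in\mathrm C_0(\mathrm Y)$ with $0\leqslant\varphi\leqslant\psi$ (which is how the Radon integral of a nonnegative lower semicontinuous function is defined in the Bourbaki framework), and a supremum of vaguely continuous functionals is vaguely lower semicontinuous. Your separate treatment of the compact case is the right way to respect the definition of $\mathrm\Phi(\mathrm Y)$, where $\psi$ may be negative but is bounded below: the shift by $M$ is harmless because $1\in\mathrm C_0(\mathrm Y)$ makes $\nu\mapsto\nu(\mathrm Y)$ vaguely continuous and finite, so no $\infty-\infty$ issue arises in the decomposition $\int\psi\,d\nu=\int(\psi+M)\,d\nu-M\nu(\mathrm Y)$. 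In short, you have supplied a complete and correct proof of a statement the paper delegates to the literature.
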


\begin{lemma}\label{integral} Fix $\mu\in\mathfrak M(\mathbf{A})$ and $\psi\in\mathrm\Phi(\mathrm X)$.
If $\int\psi\,d
R\mu$ is well defined, then
\begin{equation}
\int\psi\,dR\mu=\sum_{i\in
I}\,\alpha_i\int\psi\,d\mu^i,\label{lemma11}
\end{equation}
and $\int\psi\,d R\mu$ is finite if and only if the series on the
right converges absolutely.\end{lemma}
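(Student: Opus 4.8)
Lemma~\ref{integral} asks us to show that for $\mu=\sum_{i\in I}\alpha_i\mu^i\in\mathfrak M(\mathbf A)$ and $\psi\in\mathrm\Phi(\mathrm X)$, if $\int\psi\,dR\mu$ is well defined then it equals $\sum_{i\in I}\alpha_i\int\psi\,d\mu^i$, with finiteness equivalent to absolute convergence of that series. The plan is to reduce everything to the corresponding classical additivity statement for a \emph{finite} family of measures together with an exhaustion argument that exploits the local finiteness of the condenser. First I would recall that, by definition of $R\mu$, we have the Jordan decomposition $R\mu^+=\sum_{i\in I^+}\mu^i$ and $R\mu^-=\sum_{i\in I^-}\mu^i$, where the disjointness hypothesis~(\ref{non}) guarantees these are genuinely the positive and negative parts. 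Saying that $\int\psi\,dR\mu$ is well defined means that not both $\int\psi\,dR\mu^+$ and $\int\psi\,d(R\mu)^-$ are $+\infty$ (recall $\psi$ is bounded below, or $\geqslant0$ in the noncompact case, so the negative contributions cause no trouble); hence it suffices to prove, separately for the nonnegative measure $R\mu^+=\sum_{i\in I^+}\mu^i$ and for $R\mu^-=\sum_{i\in I^-}\mu^i$, the identity $\int\psi\,d\bigl(\sum_i\mu^i\bigr)=\sum_i\int\psi\,d\mu^i$ (all terms now $\geqslant$ some fixed real lower bound for $\psi$, so the right-hand series is well defined in $[-\infty,\infty]$ after a harmless shift), and then subtract.

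So the core is: for a countable, locally finite family $(\mu^i)_{i\in J}$ of nonnegative Radon measures and $\psi\in\mathrm\Phi(\mathrm X)$ bounded below (WLOG $\psi\geqslant0$, replacing $\psi$ by $\psi$ plus a constant times a suitable function if $\mathrm X$ is compact, or just using $\psi\geqslant0$ in the noncompact case), one has $\sum_{i\in J}\mu^i=:\sigma$ is a well-defined Radon measure and $\int\psi\,d\sigma=\sum_{i\in J}\int\psi\,d\mu^i$. I would prove this by writing $\psi=\sup_n\varphi_n$ as an increasing limit of functions $\varphi_n\in\mathrm C_0(\mathrm X)$ with $0\leqslant\varphi_n\leqslant\psi$ (possible since $\psi$ is l.s.c.\ and $\geqslant0$: this is the standard approximation, e.g.\ \cite{B3} or \cite{F1}), applying the monotone convergence / Beppo Levi theorem for positive measures to pass $\int\psi\,d\sigma=\sup_n\int\varphi_n\,d\sigma$ and likewise $\int\psi\,d\mu^i=\sup_n\int\varphi_n\,d\mu^i$, and for each fixed $\varphi_n\in\mathrm C_0(\mathrm X)$ using that its compact support meets only finitely many $A_i$, so $\int\varphi_n\,d\sigma=\sum_{i\in J}\int\varphi_n\,d\mu^i$ is actually a \emph{finite} sum and the identity is trivial. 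Exchanging the two suprema (legitimate for nonnegative quantities, by Tonelli applied to counting measure on $J$ and the directed family $(\varphi_n)$) yields $\int\psi\,d\sigma=\sum_{i\in J}\int\psi\,d\mu^i$.

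Finally, combining the two sign classes: $\int\psi\,dR\mu=\int\psi\,dR\mu^+-\int\psi\,dR\mu^-=\sum_{i\in I^+}\int\psi\,d\mu^i-\sum_{i\in I^-}\int\psi\,d\mu^i=\sum_{i\in I}\alpha_i\int\psi\,d\mu^i$, which is the asserted formula~(\ref{lemma11}); and $\int\psi\,dR\mu$ is finite precisely when both $\sum_{i\in I^+}\int\psi\,d\mu^i$ and $\sum_{i\in I^-}\int\psi\,d\mu^i$ are finite, i.e.\ when $\sum_{i\in I}\int|\psi|\,d\mu^i<\infty$ (again using boundedness below of $\psi$ to control the negative part), which is exactly absolute convergence of the series on the right of~(\ref{lemma11}). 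The one point that needs a little care — and which I expect to be the main technical obstacle rather than the measure-theoretic exchange of limits — is the bookkeeping around the sign of $\psi$: in the compact case $\psi$ need only be bounded below rather than nonnegative, so I would either translate by an additive constant (harmless since $\mathrm X$ compact forces all $\mu^i$, and hence the finitely-many-nonzero sums, to be finite measures on a compact space) or split $\psi=\psi^+-\psi^-$ with $\psi^-$ bounded and continuous and absorb $\int\psi^-\,d\mu^i$ into finite sums directly; in the noncompact case $\psi\geqslant0$ by the very definition of $\mathrm\Phi$, so no translation is needed.
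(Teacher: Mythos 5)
Your proof is correct, but it reaches (\ref{lemma11}) by a genuinely different mechanism than the paper. The paper makes the same two reductions you do (shift $\psi$ by a constant when $\mathrm X$ is compact so that $\psi\geqslant0$, and treat $R\mu^{+}$ and $R\mu^{-}$ separately), but then it sandwiches $\int\psi\,dR\mu^{+}$ between partial sums: the bound $\int\psi\,dR\mu^{+}\geqslant\sum_{i\in I^{+},\,i\leqslant N}\int\psi\,d\mu^{i}$ is immediate from monotonicity, and the reverse bound in the limit $N\to\infty$ comes from the vague convergence of the partial sums $\sum_{i\in I^{+},\,i\leqslant N}\mu^{i}$ to $R\mu^{+}$ together with the vague lower semicontinuity of $\nu\mapsto\int\psi\,d\nu$ (Lemma~\ref{lemma:lower}). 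You bypass vague convergence of measures entirely and work at the level of test functions: you describe $\int\psi\,d\nu$ as the supremum of $\int\varphi\,d\nu$ over minorants $\varphi\in\mathrm C_0(\mathrm X)$, $0\leqslant\varphi\leqslant\psi$, use local finiteness to see that $\int\varphi\,dR\mu^{\pm}=\sum_i\int\varphi\,d\mu^{i}$ is a finite sum for each such $\varphi$, and then interchange the supremum with the sum of nonnegative terms. In effect you re-derive, inline, the mechanism that underlies Lemma~\ref{lemma:lower}, so your argument is more self-contained, while the paper's is shorter because it quotes that lemma and the vague convergence of partial sums. One point to tighten: on a general locally compact Hausdorff space you cannot in general write $\psi=\sup_n\varphi_n$ with an increasing \emph{sequence} $(\varphi_n)\subset\mathrm C_0(\mathrm X)$ (compare the paper's remark in Section~\ref{sec:2} on nets versus sequences), so the approximation must be phrased, as your own parenthetical already hints, through the upward-directed \emph{family} of all $\mathrm C_0$ minorants of $\psi$; since that family is a lattice, the monotone passage to the limit and the interchange of supremum and sum (your ``Tonelli'' step, which for a directed family is really a monotone-convergence argument using directedness) still go through. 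With that rephrasing, and with your observation that compactness of $\mathrm X$ forces $I$ to be finite so the constant shift is harmless, your proof is complete and equivalent in strength to the paper's.
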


\begin{proof} We can certainly assume $\psi$ to be nonnegative,
for if not, we replace $\psi$ by a function~$\psi'\geqslant0$
obtained by adding to~$\psi$ a suitable constant~$c>0$, which is
always possible since a lower semicontinuous function is bounded
from below on a compact space. Hence,
$$\int\psi\,d R\mu^+\geqslant\sum_{i\in I^+, \ i\leqslant N}\,\int\psi\,d\mu^i
\quad\mbox{for all \ }N\in\mathbb N.$$ On the other hand, the sum of
$\mu^i$ over all $i\in I^+$ that do not exceed~$N$
approaches~$R\mu^+$ vaguely as $N\to\infty$; consequently, by
Lemma~\ref{lemma:lower},
\begin{equation*}
\int\psi\,d R\mu^+\leqslant\lim_{N\to\infty}\,\sum_{i\in I^+, \
i\leqslant N}\,\int\psi\,d\mu^i.\end{equation*} Combining the last
two inequalities and then letting $N\to\infty$ yields
$$\int\psi\,d R\mu^+=\sum_{i\in I^+}\,\int\psi\,d\mu^i.$$ Since the
same holds true for $R\mu^-$ and~$I^-$ instead of~$R\mu^+$
and~$I^+$, respectively, the lemma follows.\end{proof}

\begin{corollary}\label{pot.ener} Given $\mu,\,\mu_1\in\mathfrak
M(\mathbf{A})$ and $x\in\mathrm X$, we have
\begin{align}
\kappa(x,R\mu)&=\sum_{i\in
I}\,\alpha_i\kappa(x,\mu^i),\label{poten}\\
\kappa(R\mu,R\mu_1)&=\sum_{i,j\in
I}\,\alpha_i\alpha_j\kappa(\mu^i,\mu_1^j),\label{mutual}
\end{align}
each of the identities being understood in the sense that its
right-hand side is well defined whenever so is the left-hand one and
then they coincide. Furthermore, the left-hand side
in~{\rm(\ref{poten})} or in~{\rm(\ref{mutual})} is finite if and
only if the corresponding series on the right converges
absolutely.\end{corollary}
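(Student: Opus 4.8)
The plan is to reduce everything to Lemmas~\ref{lemma:lower} and~\ref{integral}. Identity~(\ref{poten}), together with its finiteness criterion, should follow at once by applying Lemma~\ref{integral} to $\psi:=\kappa(x,\,\cdot\,)$: this function is lower semicontinuous on~$\mathrm X$, being a slice of the lower semicontinuous kernel, and it is nonnegative unless $\mathrm X$ is compact, so $\kappa(x,\,\cdot\,)\in\mathrm\Phi(\mathrm X)$; since by definition $\int\kappa(x,\,\cdot\,)\,dR\mu=\kappa(x,R\mu)$ and $\int\kappa(x,\,\cdot\,)\,d\mu^i=\kappa(x,\mu^i)$, Lemma~\ref{integral} yields precisely~(\ref{poten}) and the assertion about absolute convergence.

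For~(\ref{mutual}) I would first treat the nonnegative case separately: if $\sigma=\sum_k\sigma_k$ and $\tau=\sum_\ell\tau_\ell$ are (finite or countable) vaguely convergent sums of measures from~$\mathfrak M^+$, then $\kappa(\sigma,\tau)=\sum_{k,\ell}\kappa(\sigma_k,\tau_\ell)$, the right-hand side being a series of nonnegative terms which converges if and only if $\kappa(\sigma,\tau)<\infty$. To see this one may assume $\kappa\geqslant0$ (if $\mathrm X$ is compact, replace $\kappa$ by $\kappa+c$ with a suitable $c>0$; all measures in sight are then bounded, and the extra term $c\,\sigma(\mathrm X)\tau(\mathrm X)=\sum_{k,\ell}c\,\sigma_k(\mathrm X)\tau_\ell(\mathrm X)$ is subtracted back at the end). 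Writing $\sigma_{(n)}:=\sum_{k\leqslant n}\sigma_k$, one squeezes: $\kappa(x,\sigma)\geqslant\kappa(x,\sigma_{(n)})=\sum_{k\leqslant n}\kappa(x,\sigma_k)$ because $\kappa(x,\,\cdot\,)\geqslant0$ and $\sigma_{(n)}\leqslant\sigma$, while $\kappa(x,\sigma)\leqslant\liminf_n\kappa(x,\sigma_{(n)})$ by Lemma~\ref{lemma:lower}; hence $\kappa(x,\sigma)=\sum_k\kappa(x,\sigma_k)$ pointwise. Now the Fubini--Tonelli theorem for nonnegative lower semicontinuous functions and Radon measures (see, e.\,g.,~\cite{B2}), the symmetry of~$\kappa$, and the monotone convergence theorem applied to the increasing partial sums give
\[\kappa(\sigma,\tau)=\int\Bigl(\int\kappa(x,y)\,d\sigma(x)\Bigr)\,d\tau(y)=\int\sum_k\kappa(y,\sigma_k)\,d\tau(y)=\sum_k\kappa(\sigma_k,\tau);\]
a second application of the displayed identity to each $\kappa(\sigma_k,\tau)=\sum_\ell\kappa(\sigma_k,\tau_\ell)$, followed by a rearrangement of the resulting double series of nonnegative terms, yields the claim.

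It then remains to deduce~(\ref{mutual}) from the nonnegative case. By~(\ref{non}) one has $R\mu^+=\sum_{i\in I^+}\mu^i$, $R\mu^-=\sum_{i\in I^-}\mu^i$ and similarly for~$\mu_1$; applying the nonnegative case to the four pairs $(R\mu^\varepsilon,R\mu_1^{\varepsilon'})$, $\varepsilon,\varepsilon'\in\{+,-\}$, gives $\kappa(R\mu^\varepsilon,R\mu_1^{\varepsilon'})=\sum_{i\in I^\varepsilon,\,j\in I^{\varepsilon'}}\kappa(\mu^i,\mu_1^j)\in[0,\infty]$. Since $R\mu\otimes R\mu_1=\sum_{\varepsilon,\varepsilon'}\varepsilon\varepsilon'\,R\mu^\varepsilon\otimes R\mu_1^{\varepsilon'}$ is the Jordan decomposition of the signed product measure, $\kappa(R\mu,R\mu_1)$ is well defined exactly when $\sum_{\varepsilon,\varepsilon'}\varepsilon\varepsilon'\,\kappa(R\mu^\varepsilon,R\mu_1^{\varepsilon'})$ is not of the form $\infty-\infty$, and then the two coincide; substituting the four identities above and noting that $\varepsilon\varepsilon'=\alpha_i\alpha_j$ whenever $i\in I^\varepsilon$ and $j\in I^{\varepsilon'}$ produces~(\ref{mutual}), while $\kappa(R\mu,R\mu_1)$ is finite if and only if all four terms are finite, i.\,e. (using the boundedness of $R\mu^\pm$, $R\mu_1^\pm$ in the compact case) if and only if the series in~(\ref{mutual}) converges absolutely. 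The only delicate points are thus the passages from finite to infinite sums --- handled by Lemma~\ref{lemma:lower} for one inequality and by plain monotonicity for the other --- the single use of Fubini--Tonelli, and the harmless constant shift when $\mathrm X$ is compact; I do not anticipate any genuine obstacle.
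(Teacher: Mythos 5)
Your proof is correct and essentially coincides with the paper's: (\ref{poten}) is exactly the paper's application of Lemma~\ref{integral} to $\psi=\kappa(x,\,\cdot\,)\in\mathrm\Phi(\mathrm X)$, and for (\ref{mutual}) the paper likewise combines Fubini's theorem with Lemma~\ref{integral} and the lower semicontinuity of potentials of nonnegative measures. The only difference is organizational: where the paper simply invokes Lemma~\ref{integral} a second time (in the variable of $R\mu$), you re-derive the same interchange by hand via Lemma~\ref{lemma:lower}, monotonicity, and the constant-shift trick for compact $\mathrm X$, and you handle the signs through the Jordan decomposition of $R\mu\otimes R\mu_1$ --- which is precisely the bookkeeping hidden in the paper's one-line citation.
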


\begin{proof} Relation (\ref{poten}) is a direct consequence
of~(\ref{lemma11}), while (\ref{mutual}) follows from Fubini's
theorem (cf.~\cite[\S~8, Th.~1]{B3}) and Lemma~\ref{integral} on
account of the fact that $\kappa(x,\nu)$, where $\nu\in\mathfrak
M^+$ is given, is lower semicontinuous on~$\mathrm X$ (see,
e.\,g.,~\cite{F1}).\end{proof}

\begin{definition} Given $\mu,\,\mu_1\in\mathfrak M(\mathbf{A})$, we
call $\kappa(\,\cdot\,,\mu):=\kappa(\,\cdot\,,R\mu)$ the {\it
potential\/} of~$\mu$ and $\kappa(\mu,\mu_1):=\kappa(R\mu,R\mu_1)$
the {\it mutual energy\/} of~$\mu$ and~$\mu_1$. For $\mu\equiv\mu_1$
we get the {\it energy\/} $\kappa(\mu,\mu)$ of~$\mu$; i.\,e., if
$\kappa(R\mu,R\mu)$ is well defined, then
\begin{equation}
\kappa(\mu,\mu):=\kappa(R\mu,R\mu)=\sum_{i,j\in
I}\,\alpha_i\alpha_j\kappa(\mu^i,\mu^j).\label{ener}\end{equation}
\end{definition}

\begin{corollary}\label{finite}  For $\mu\in\mathfrak M(\mathbf{A})$ to
be of finite energy, it is necessary and sufficient that
$\mu^i\in\mathcal E$ for all $i\in I$ and
\[\sum_{i\in I}\,\|\mu^i\|^2<\infty.\]\end{corollary}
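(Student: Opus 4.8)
\emph{The plan} is to read the characterization off from Corollary~\ref{pot.ener} applied with $\mu_1:=\mu$. That result tells us that $\mu$ has finite energy precisely when the double series $\kappa(\mu,\mu)=\sum_{i,j\in I}\alpha_i\alpha_j\,\kappa(\mu^i,\mu^j)$ converges absolutely, that is, $\sum_{i,j\in I}|\kappa(\mu^i,\mu^j)|<\infty$; so everything reduces to proving that this is equivalent to the conjunction of the two conditions $\mu^i\in\mathcal E$ for all $i\in I$ and $\sum_{i\in I}\|\mu^i\|^2<\infty$.

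\emph{Necessity} would then be immediate: discarding all off-diagonal terms, $\sum_{i,j\in I}|\kappa(\mu^i,\mu^j)|<\infty$ forces $\sum_{i\in I}\|\mu^i\|^2=\sum_{i\in I}|\kappa(\mu^i,\mu^i)|<\infty$, and the finiteness of each summand $\|\mu^i\|^2=\kappa(\mu^i,\mu^i)$ means $\mu^i\in\mathcal E$.

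For \emph{sufficiency} I would first decouple the two signs. By~(\ref{non}) the measures $R\mu^+$ and $R\mu^-$ are the Jordan components of $R\mu$, and, $\kappa$ being positive definite, the Cauchy--Schwarz inequality shows $R\mu\in\mathcal E$ if and only if $R\mu^+\in\mathcal E^+$ and $R\mu^-\in\mathcal E^+$; hence it suffices to treat $R\mu^+=\sum_{i\in I^+}\mu^i$, the argument for $R\mu^-$ being the same. We may assume $\kappa\ge0$ (automatically so unless $\mathrm X$ is compact, in which case local finiteness makes $I$ finite and the assertion is clear), so that all mutual energies $\kappa(\mu^i,\mu^j)$, $i,j\in I^+$, are nonnegative and, by Corollary~\ref{pot.ener} applied within $I^+$, $\kappa(R\mu^+,R\mu^+)=\sum_{i,j\in I^+}\kappa(\mu^i,\mu^j)\in[0,\infty]$; the goal is to bound this sum by a multiple of $\sum_{i\in I^+}\|\mu^i\|^2$. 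The diagonal contributes exactly $\sum_{i\in I^+}\|\mu^i\|^2$, so the substantive point --- and the step I expect to be the main obstacle --- is the off-diagonal part $\sum_{i\ne j}\kappa(\mu^i,\mu^j)$: the bare Cauchy--Schwarz estimate $\kappa(\mu^i,\mu^j)\le\|\mu^i\|\,\|\mu^j\|$ only yields $\sum_{i,j\in I^+}\kappa(\mu^i,\mu^j)\le\bigl(\sum_{i\in I^+}\|\mu^i\|\bigr)^2$, which is too weak when $I^+$ is infinite, an $\ell^2$ bound on $(\|\mu^i\|)_i$ not giving an $\ell^1$ bound. The plan is to sharpen this to a Schur-type estimate $\kappa(\mu^i,\mu^j)\le c_{ij}\|\mu^i\|\,\|\mu^j\|$ with $\sup_i\sum_j c_{ij}<\infty$, whence $\sum_{i,j\in I^+}\kappa(\mu^i,\mu^j)\le\bigl(\sup_i\sum_j c_{ij}\bigr)\sum_{i\in I^+}\|\mu^i\|^2<\infty$, using the local finiteness of $\mathbf A$ to limit how strongly the plates $A_j$ can act on a fixed $A_i$; an alternative route is to show that the increasing partial sums $\sigma_N=\sum_{i\in I^+,\ i\le N}\mu^i$ form a strong Cauchy sequence in $\mathcal E^+$, whose vague limit is necessarily $R\mu^+$, and to conclude by the strong completeness of $\mathcal E^+$ for perfect kernels (Theorem~\ref{th:1}). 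In either route the crux is the same quantitative control of $\sum_{i\ne j}\kappa(\mu^i,\mu^j)$ in terms of $\sum_{i\in I^+}\|\mu^i\|^2$.
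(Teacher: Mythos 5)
Your reduction to the absolute convergence of the double series via Corollary~\ref{pot.ener}, and the necessity half (drop the off-diagonal terms), are correct and agree with the paper. The genuine gap is exactly where you place it: the sufficiency half. The Schur-type bound $\kappa(\mu^i,\mu^j)\leqslant c_{ij}\,\|\mu^i\|\,\|\mu^j\|$ with $\sup_i\sum_j c_{ij}<\infty$ is never proved, and it cannot be extracted from the hypotheses: local finiteness of $\mathbf{A}$ is purely topological (each compact set meets finitely many plates) and imposes no quantitative decay of $\kappa$ between distinct plates. In fact the implication you are trying to prove fails for infinite $I$. Take the Newtonian kernel in $\mathbb R^3$, $I=I^+=\{2,3,\dots\}$, $A_i$ the closed ball of radius $1/4$ centred at $(i,0,0)$, and $\mu^i:=m_i\lambda_i$, where $\lambda_i$ is the uniform unit measure on $A_i$ and $m_i:=i^{-1/2}(\log i)^{-1}$. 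Then $\mu^i\in\mathcal E$ and $\sum_i\|\mu^i\|^2=E_0\sum_i i^{-1}(\log i)^{-2}<\infty$, while Newton's theorem gives $\kappa(\mu^i,\mu^j)=m_im_j\,|i-j|^{-1}$ for $i\ne j$, and $\sum_{i\ne j}m_im_j|i-j|^{-1}=\infty$ (already the block $i<j\leqslant 2i$ contributes of order $(i\log i)^{-1}$ for each $i$); since all terms are nonnegative, $\kappa(R\mu,R\mu)=+\infty$. The same example defeats your alternative route: the partial sums $\sigma_N$ fail to be strongly Cauchy for precisely the same reason, so completeness of $\mathcal E^+$ cannot be invoked.

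For comparison, the paper's own proof is a single line: it deduces the corollary from~(\ref{ener}) and the inequality $2\kappa(\nu_1,\nu_2)\leqslant\|\nu_1\|^2+\|\nu_2\|^2$ --- that is, exactly the ``bare'' estimate you rejected as too weak, whose summed majorant $\tfrac12\sum_{i,j\in I}\bigl(\|\mu^i\|^2+\|\mu^j\|^2\bigr)$ is finite only when $I$ is finite. So your diagnosis of the difficulty is sharper than the argument given in the paper: necessity holds in general, and sufficiency holds for finite $I$ (there the paper's inequality, or your observation that compact $\mathrm X$ forces $I$ finite, settles it), but for infinite $I$ sufficiency needs an extra hypothesis controlling the equally signed off-diagonal interactions; note that the standing assumption~(\ref{bou}) bounds $\kappa$ only on $A^+\times A^-$ and does not help. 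As a proof of Corollary~\ref{finite} as stated your proposal is therefore incomplete, but the missing step is not one you could have supplied; the statement itself is only safe with $I$ finite or under additional separation/decay assumptions on the plates.
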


\begin{proof} This follows from (\ref{ener}) due to the inequality
$2\kappa(\nu_1,\nu_2)\leqslant\|\nu_1\|^2+\|\nu_2\|^2$, where
$\nu_1,\,\nu_2\in\mathcal E$.\end{proof}

\begin{remark} Observe that, for every $\mu\in\mathfrak M(\mathbf{A})$,
the series in~(\ref{ener}) actually defines the energy of the
corresponding infinitely dimensional vector measure~$(\mu^i)_{i\in
I}$ relative to the interaction matrix~$(\alpha_i\alpha_j)_{i,j\in
I}$\,; compare with~\cite{GR0,GR} and \cite[Chap.~5, \S~4]{NS}. Our
approach, however, is essentially based on the fact that, due to the
specific form of interaction matrix, that value can also be obtained
as the energy of the corresponding scalar Radon
measure~$R\mu$.\end{remark}

\begin{remark} Since we make no difference between $\mu\in\mathfrak
M(\mathbf{A})$ and $R\mu$ when dealing with their energies or
potentials, we shall sometimes call a measure associated with
$\mathbf{A}$ simply a {\it measure\/}~--- certainly, if this causes
no confusion.\end{remark}

\subsection{Strong topology on $\mathcal E(\mathbf{A})$}

Let $\mathcal E(\mathbf{A})$ consist of all $\mu\in\mathfrak
M(\mathbf{A})$ of finite energy $\kappa(\mu,\mu)$. Since $\mathfrak
M(\mathbf{A})$ forms a convex cone, it follows from
Corollary~\ref{finite} that so does $\mathcal E(\mathbf{A})$.

Let us treat $\mathcal E(\mathbf{A})$ as a {\it semimetric space\/}
with the semimetric
\begin{equation}
\|\mu_1-\mu_2\|:=\|\mu_1-\mu_2\|_{\mathcal
E(\mathbf{A}}:=\|R\mu_1-R\mu_2\|_\mathcal
E,\quad\mu_1,\,\mu_2\in\mathcal E(\mathbf{A});\label{seminorm}
\end{equation}
then $\mathcal E(\mathbf{A})$ and its $R$-image become {\it
isometric}. Similarly with the terminology in~$\mathcal E$, the
topology on $\mathcal E(\mathbf{A})$ defined by means of the
semimetric~(\ref{seminorm}) is called {\it strong\/}.

Two elements of $\mathcal E(\mathbf{A})$, $\mu_1$ and~$\mu_2$, are
called {\it equivalent in\/}~$\mathcal E(\mathbf{A})$ if
$\|\mu_1-\mu_2\|=0$. If, in addition, the kernel~$\kappa$ is assumed
to be strictly positive definite, then the equivalence in~$\mathcal
E(\mathbf{A})$ implies that in~$\mathfrak M(\mathbf{A})$ (namely,
then $\mu_1\cong\mu_2$), and it implies the identity (i.\,e., then
$\mu_1\equiv\mu_2$) if, moreover, all $A_i$, $i\in I$, are mutually
disjoint.

\section{Interior capacities of condensers. Elementary properties}\label{sec:4}

Given a set $\mathcal H$ in the semimetric space~$\mathcal
E(\mathbf{A})$, let us introduce the quantity
$$
\|\mathcal H\|^2:=\inf_{\nu\in\mathcal H}\,\|\nu\|^2,
$$
interpreted as $+\infty$ if $\mathcal H$ is empty. If $\|\mathcal
H\|^2<\infty$, one can consider the variational problem on the
existence of $\lambda_\mathcal H\in\mathcal H$ with minimal energy
$\|\lambda_\mathcal H\|^2=\|\mathcal H\|^2$; such a problem will be
referred to as the $\mathcal H$-{\it problem\/}. The $\mathcal
H$-problem is called {\it solvable\/} if a
minimizer~$\lambda_\mathcal H$ exists.

\subsection{Capacity of a condenser}Fix a vector-valued function~$\mathbf{g}=(g_i)_{i\in I}$,
where all $g_i:\mathbf X\to(0,\infty)$ are continuous, and a
numerical vector $\mathbf{a}=(a_i)_{i\in I}$ with $a_i>0$. Given a
condenser~$\mathbf{A}$, write
\begin{equation*}\mathfrak M^+(A_i,a_i,g_i):=\Bigl\{\nu\in\mathfrak
M^+(A_i): \ \int g_i\,d\nu=a_i\Bigr\},\quad i\in I,\end{equation*}
and let $\mathfrak M(\mathbf{A},\mathbf{a},\mathbf{g})$ consist of
all $\mu\in\mathfrak M(\mathbf{A})$ with $\mu^i\in\mathfrak
M^+(A_i,a_i,g_i)$ for all $i\in I$. Given a kernel~$\kappa$, also
write \[\mathcal E^+(A_i,a_i,g_i):=\mathfrak
M^+(A_i,a_i,g_i)\cap\mathcal E,\qquad\mathcal
E(\mathbf{A},\mathbf{a},\mathbf{g}):=\mathfrak
M(\mathbf{A},\mathbf{a},\mathbf{g})\cap\mathcal E(\mathbf{A}).\]

\begin{definition} We shall call the value
\begin{equation*}
{\rm cap}\,\mathbf A:={\rm
cap}\,(\mathbf{A},\mathbf{a},\mathbf{g}):=\frac{1}{\|\mathcal
E(\mathbf{A},\mathbf{a},\mathbf{g})\|^2} \label{def}
\end{equation*}
the ({\it interior}) {\it capacity\/} of an $(I^+,I^-)$-condenser
$\mathbf A$ (with respect to~$\kappa$, $\mathbf{a}$,
and~$\mathbf{g}$).\end{definition}

Here and in the sequel, we adopt the convention that $1/0=+\infty$.
Then, by the positive definiteness of the kernel, $0\leqslant{\rm
cap}\,\mathbf{A}\leqslant\infty$; necessary and (or) sufficient
conditions for $0<{\rm cap}\,\mathbf{A}<\infty$ to hold will be
provided in~Sec.~\ref{sec:nondeg} below (see
also~Corollary~\ref{cor:j}).

\subsection{Continuity property}
On $\mathfrak C=\mathfrak C(I^+,I^-)$, it is natural to introduce an
ordering relation~$\prec$ by declaring $\mathbf{A}'\prec\mathbf{A}$
to mean that $A_i'\subset A_i$ for all $i\in I$. Here,
$\mathbf{A}'=(A_i')_{i\in I}$. Then ${\rm
cap}\,(\,\cdot\,,\mathbf{a},\mathbf{g})$ is a nondecreasing function
of a condenser, namely
\begin{equation}
{\rm cap}\,(\mathbf{A}',\mathbf{a},\mathbf{g})\leqslant{\rm
cap}\,(\mathbf{A},\mathbf{a},\mathbf{g})\quad\mbox{whenever \
}\mathbf{A}'\prec\mathbf{A}. \label{increas'}
\end{equation}
Given $\mathbf{A}\in\mathfrak C$, let us consider the increasing
family $\{\mathbf{K}\}_{\mathbf A}$ of all compact condensers
$\mathbf{K}=(K_i)_{i\in I}\in\mathfrak C$ such that
$\mathbf{K}\prec\mathbf{A}$.

\begin{lemma}\label{lemma.cont} If $\mathbf K$ ranges over
$\{\mathbf{K}\}_{\mathbf A}$, then
\begin{equation}
{\rm
cap}\,(\mathbf{A},\mathbf{a},\mathbf{g})=\lim_{\mathbf{K}\uparrow\mathbf{A}}\,
{\rm cap}\,(\mathbf{K},\mathbf{a},\mathbf{g}).\label{cont}
\end{equation}\end{lemma}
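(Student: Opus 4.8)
The plan is to establish \eqref{cont} by proving the two inequalities separately. The inequality ``$\geqslant$'' in \eqref{cont} is immediate from the monotonicity \eqref{increas'}: since $\mathbf{K}\prec\mathbf{A}$ for every $\mathbf{K}\in\{\mathbf{K}\}_{\mathbf{A}}$, we have ${\rm cap}\,(\mathbf{K},\mathbf{a},\mathbf{g})\leqslant{\rm cap}\,(\mathbf{A},\mathbf{a},\mathbf{g})$, and since the family $\{\mathbf{K}\}_{\mathbf{A}}$ is directed upward by $\prec$, the net $\bigl({\rm cap}\,(\mathbf{K},\mathbf{a},\mathbf{g})\bigr)_{\mathbf{K}}$ is nondecreasing and bounded above by ${\rm cap}\,(\mathbf{A},\mathbf{a},\mathbf{g})$, so its limit exists and does not exceed the latter. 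Passing to reciprocals (with the convention $1/0=+\infty$), this says $\|\mathcal E(\mathbf{A},\mathbf{a},\mathbf{g})\|^2\leqslant\liminf_{\mathbf{K}\uparrow\mathbf{A}}\|\mathcal E(\mathbf{K},\mathbf{a},\mathbf{g})\|^2$.

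For the reverse inequality it is equivalent to show $\|\mathcal E(\mathbf{A},\mathbf{a},\mathbf{g})\|^2\geqslant\lim_{\mathbf{K}\uparrow\mathbf{A}}\|\mathcal E(\mathbf{K},\mathbf{a},\mathbf{g})\|^2$. If $\mathcal E(\mathbf{A},\mathbf{a},\mathbf{g})$ is empty there is nothing to prove, so fix $\mu\in\mathcal E(\mathbf{A},\mathbf{a},\mathbf{g})$ with $\|\mu\|^2<\infty$; it suffices to produce, for each $\varepsilon>0$, a compact condenser $\mathbf{K}\prec\mathbf{A}$ and a measure $\mu_{\mathbf{K}}\in\mathcal E(\mathbf{K},\mathbf{a},\mathbf{g})$ with $\|\mu_{\mathbf{K}}\|^2\leqslant\|\mu\|^2+\varepsilon$. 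The natural construction is coordinatewise: for each $i\in I$ choose a compact $K_i\subset A_i$ carrying a nonnegative measure close to $\mu^i$. Concretely, since $\mu^i\in\mathcal E^+(A_i,a_i,g_i)$, one approximates $\mu^i$ strongly (in $\mathcal E$) by its restrictions $\mu^i|_{K}$ to suitable compact $K\subset A_i$ — this uses that $\mu^i$ is inner regular and that $\kappa$ is positive definite, so $\|\mu^i-\mu^i|_{K}\|\to0$ as $K$ exhausts $A_i$ — and then rescales by the factor $a_i/\int g_i\,d(\mu^i|_{K})$ (which tends to $1$, using $g_i>0$ continuous and $\int g_i\,d\mu^i=a_i<\infty$) to restore the normalization. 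Set $K_i$ equal to the chosen compact set (enlarging it if necessary so that $\mathbf{K}=(K_i)_{i\in I}$ is still a condenser — local finiteness and the disjointness \eqref{non} are inherited automatically from $\mathbf{A}$ since $K_i\subset A_i$), and let $\mu_{\mathbf{K}}^i$ be the rescaled restriction.

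The main obstacle is controlling the energy of the whole linear combination $\mu_{\mathbf{K}}=\sum_{i\in I}\alpha_i\mu_{\mathbf{K}}^i$ \emph{simultaneously} over the infinitely many coordinates, since the energy \eqref{ener} is a doubly-infinite sum $\sum_{i,j}\alpha_i\alpha_j\kappa(\mu^i,\mu^j)$ and small coordinatewise errors could in principle accumulate. The clean way around this is to work with the isometry $\mu\mapsto R\mu$ into $\mathcal E$: by Corollary~\ref{finite}, $\sum_{i\in I}\|\mu^i\|^2<\infty$, so given $\varepsilon>0$ one first picks a finite $I_0\subset I$ with $\sum_{i\notin I_0}\|\mu^i\|^2$ as small as desired, leaves the coordinates outside $I_0$ essentially untouched (or handles them crudely), and performs the careful compact approximation only on the finitely many $i\in I_0$; then $\|R\mu_{\mathbf{K}}-R\mu\|\leqslant\sum_{i\in I}\|\mu_{\mathbf{K}}^i-\mu^i\|$ (triangle inequality in $\mathcal E$, valid since each difference lies in $\mathcal E$) can be made $\leqslant\varepsilon$, whence $\|\mu_{\mathbf{K}}\|\leqslant\|\mu\|+\varepsilon$ by the triangle inequality for the seminorm \eqref{seminorm}. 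Taking infima over $\mu$ and letting $\varepsilon\downarrow0$, then using that the net is nondecreasing, yields $\lim_{\mathbf{K}\uparrow\mathbf{A}}\|\mathcal E(\mathbf{K},\mathbf{a},\mathbf{g})\|^2\leqslant\|\mathcal E(\mathbf{A},\mathbf{a},\mathbf{g})\|^2$, and combined with the first inequality this gives \eqref{cont}. A minor point to check along the way is that the rescaling is harmless when $\int g_i\,d(\mu^i|_K)>0$, which holds for $K$ large enough provided $\mu^i\ne0$; and if some $\mu^i=0$ the constraint $\int g_i\,d\mu^i=a_i>0$ is violated, so in fact every coordinate of an admissible $\mu$ is nonzero and this case does not arise.
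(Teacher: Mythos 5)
Your overall strategy coincides with the paper's: the inequality ``$\geqslant$'' from the monotonicity (\ref{increas'}), and for the converse, fixing $\mu\in\mathcal E(\mathbf{A},\mathbf{a},\mathbf{g})$ and replacing each coordinate $\mu^i$ by a rescaled trace on a compact $K_i\subset A_i$ whose energy is compared with that of $\mu$. The gap is in the device you propose for handling the infinitely many coordinates. You pick a finite $I_0$ with $\sum_{i\notin I_0}\|\mu^i\|^2$ small, handle the tail coordinates ``crudely'', and then invoke $\|R\mu_{\mathbf K}-R\mu\|\leqslant\sum_{i\in I}\|\mu^i_{\mathbf K}-\mu^i\|$. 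First, the tail coordinates cannot be ``left essentially untouched'': every $K_i$ must be compact for $\mathbf K$ to belong to $\{\mathbf K\}_{\mathbf A}$, so each $\mu^i$, $i\notin I_0$, must also be replaced by a rescaled compactly supported measure. Second, and decisively, the crude bound for such a replacement is only of order $\|\mu^i\|$, so the tail contribution to your triangle-inequality estimate is of order $\sum_{i\notin I_0}\|\mu^i\|$ --- a sum of \emph{norms}, not of squares --- and smallness of $\sum_{i\notin I_0}\|\mu^i\|^2$ gives no control over it (take $\|\mu^i\|=1/i$: the square sum converges while the sum diverges, and it need not even be finite). Hence the claim that the right-hand side ``can be made $\leqslant\varepsilon$'' does not follow from your choice of $I_0$.

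The repair --- and this is what the paper does --- is to control \emph{every} coordinate with a summable error rather than only finitely many: one may assume $\kappa\geqslant0$ (otherwise $\mathrm X$ is compact, so $\mathbf A$ is finite and the claim follows directly from (\ref{w}), (\ref{ww}) and (\ref{ener})), and then, by Fuglede's Lemma~1.2.2, choose for each $i\in I$ a compact $K_i^0\subset A_i$ with $\bigl|\|\mu^i\|^2-\|\mu^i_{K_i^0}\|^2\bigr|<\varepsilon^2 i^{-4}$ (hence $\|\mu^i-\mu^i_{K_i^0}\|<\varepsilon i^{-2}$, using $\kappa(\mu^i_{K_i^0},\mu^i-\mu^i_{K_i^0})\geqslant0$) and with the rescaling factor $a_i/\int g_i\,d\mu^i_{K_i^0}<1+\varepsilon i^{-2}$; note that this bound on the rescaling factors must itself be uniform in $i$, a point your sketch passes over, since the rescaled coordinate error involves it. With these choices the estimate $\bigl|\|\mu\|^2-\|\hat\mu_{\mathbf K}\|^2\bigr|\leqslant M\varepsilon$ holds for all $\mathbf K\succ\mathbf K^0$ with $M$ independent of $\mathbf K$, and the argument closes as you intend. (Your appeal to ``inner regularity and positive definiteness'' for the strong convergence of traces also tacitly uses $\kappa\geqslant0$ or the compact-$\mathrm X$ reduction above; it is not a formal consequence of positive definiteness alone as stated.)
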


\begin{proof} We can certainly assume ${\rm cap}\,(\mathbf{A},\mathbf{a},\mathbf{g})$ to be nonzero,
since otherwise (\ref{cont})~follows at once from~(\ref{increas'}).
Then the set $\mathcal E(\mathbf{A},\mathbf{a},\mathbf{g})$ must be
nonempty; fix~$\mu$, one of its elements. Given
$\mathbf{K}\in\{\mathbf{K}\}_{\mathbf{A}}$ and $i\in I$, let
$\mu^i_{\mathbf{K}}$ denote the trace of~$\mu^i$ upon~$K_i$, i.\,e.,
$\mu^i_{\mathbf{K}}:=\mu_{K_i}^i$. Applying Lemma~1.2.2
from~\cite{F1}, we conclude that
\begin{align}
\int g_i\,d\mu^i&=\lim_{\mathbf{K}\uparrow\mathbf{A}}\,\int
g_i\,d\mu_{\mathbf{K}}^i,\qquad i\in I,\label{w}\\
\kappa(\mu^i,\mu^j)&=\lim_{\mathbf{K}\uparrow\mathbf{A}}\,\kappa(\mu_{\mathbf{K}}^i,\mu_{\mathbf{K}}^j),\qquad
i,j\in I.\label{ww}
\end{align}

Fix $\varepsilon>0$. It follows from~(\ref{w}) and~(\ref{ww}) that
for every $i\in I$ one can choose a compact set $K_i^0\subset A_i$
so that
\begin{equation}
\frac{a_i}{\int
g_i\,d\mu^i_{K_i^0}}<1+\varepsilon\,i^{-2},\label{unif2}
\end{equation}
\begin{equation}
\bigl|\|\mu^i\|^2-\|\mu^i_{K_i^0}\|^2\bigr|<\varepsilon^2i^{-4}.\label{unif1}
\end{equation}
Having denoted $\mathbf{K}^0:=(K_i^0)_{i\in I}$, for every
$\mathbf{K}\in\{\mathbf{K}\}_{\mathbf{A}}$ that
follows~$\mathbf{K}^0$ we therefore have $\int
g_i\,d\mu_{\mathbf{K}}^i\ne0$ and
\begin{equation}\label{hatmu}
\hat{\mu}_{\mathbf{K}}:=\sum_{i\in I}\,\frac{\alpha_ia_i}{\int
g_i\,d\mu_{\mathbf{K}}^i}\,\mu_{\mathbf{K}}^i\in\mathcal
E(\mathbf{K},\mathbf{a},\mathbf{g}),\end{equation} the finiteness of
the energy being obtained from~(\ref{unif1})
and~Corollary~\ref{finite}. Thus,
\begin{equation}
\|\hat{\mu}_{\mathbf{K}}\|^2\geqslant\|\mathcal
E(\mathbf{K},\mathbf{a},\mathbf{g})\|^2.\label{www}\end{equation}

We next proceed by showing that
\begin{equation}
\|\mu\|^2=\lim_{\mathbf{K}\uparrow\mathbf{A}}\,\|\hat{\mu}_{\mathbf{K}}\|^2.\label{4w}\end{equation}
To this end, it can be assumed that $\kappa\geqslant0$; for if not,
then $\mathbf A$ must be finite since $\mathrm X$ is compact, and
(\ref{4w}) follows from~(\ref{w}) and~(\ref{ww}) when substituted
into~(\ref{ener}). Therefore, for every~$\mathbf{K}$ that
follows~$\mathbf{K}_0$ and every $i\in I$ we get
\begin{equation}
\|\mu^i_{\mathbf{K}}\|\leqslant\|\mu^i\|\leqslant\|R\mu^++R\mu^-\|,
\label{unif3}
\end{equation}
\begin{equation}
\|\mu^i-\mu^i_{\mathbf{K}}\|<\varepsilon\,i^{-2},\label{uniff}
\end{equation}
the latter being clear from (\ref{unif1}) because of
$\kappa(\mu^i_{\mathbf{K}},\mu^i-\mu^i_{\mathbf{K}})\geqslant0$.
Furthermore, by~(\ref{ener}),
\begin{equation*}
\begin{split}
\bigl|&\|\mu\|^2-\|\hat{\mu}_{\mathbf{K}}\|^2\bigr|\leqslant\sum_{i,j\in
I}\,\Bigl|\kappa(\mu^i,\mu^j)-\frac{a_i}{\int
g_i\,d\mu^i_{\mathbf{K}}}\frac{a_j}{\int
g_j\,d\mu^j_{\mathbf{K}}}\,\kappa(\mu_{\mathbf{K}}^i,\mu_{\mathbf{K}}^j)\Bigr|\\
&{}\leqslant\sum_{i,j\in
I}\,\Bigl[\kappa(\mu^i-\mu^i_{\mathbf{K}},\mu^j)+\kappa(\mu^i_{\mathbf{K}},\mu^j-\mu^j_{\mathbf{K}})+
\Bigl(\frac{a_i}{\int g_i\,d\mu^i_{\mathbf{K}}}\frac{a_j}{\int
g_j\,d\mu^j_{\mathbf{K}}}-1\Bigr)\,\kappa(\mu^i_{\mathbf{K}},\mu^j_{\mathbf{K}})\Bigr].
\end{split}
\end{equation*}
When combined with (\ref{unif2}), (\ref{unif3}), and~(\ref{uniff}),
this yields
\[
\bigl|\|\mu\|^2-\|\hat{\mu}_{\mathbf{K}}\|^2\bigr|\leqslant
M\varepsilon\quad\mbox{for all \ }\mathbf{K}\succ\mathbf{K}_0,\]
where $M$ is finite and independent of~$\mathbf K$, and the required
relation~(\ref{4w}) follows.

Substituting (\ref{www}) into~(\ref{4w}), in view of the arbitrary
choice of $\mu\in\mathcal E(\mathbf{A},\mathbf{a},\mathbf{g})$ we
get
\[
\|\mathcal
E(\mathbf{A},\mathbf{a},\mathbf{g})\|^2\geqslant\lim_{\mathbf{K}\uparrow\mathbf{A}}\,\|\mathcal
E(\mathbf{K},\mathbf{a},\mathbf{g})\|^2.
\]
Since the converse inequality is obvious from~(\ref{increas'}), the
proof is complete.\end{proof}

Let $\mathcal E_0(\mathbf{A},\mathbf{a},\mathbf{g})$ denote the
class of all $\mu\in\mathcal E(\mathbf{A},\mathbf{a},\mathbf{g})$
such that, for every $i\in I$, the support~$S(\mu^i)$ of~$\mu^i$ is
compact and contained in~$A_i$.

\begin{corollary} \label{cor:compact} The capacity ${\rm cap}\,(\mathbf{A},\mathbf{a},\mathbf{g})$
remains unchanged if the class $\mathcal
E(\mathbf{A},\mathbf{a},\mathbf{g})$ in its definition is replaced
by $\mathcal E_0(\mathbf{A},\mathbf{a},\mathbf{g})$.  In other
words,
\[\|\mathcal E(\mathbf{A},\mathbf{a},\mathbf{g})\|^2=
\|\mathcal E_0(\mathbf{A},\mathbf{a},\mathbf{g})\|^2.\]\end{corollary}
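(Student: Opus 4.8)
The plan is to deduce Corollary~\ref{cor:compact} directly from Lemma~\ref{lemma.cont} together with the elementary monotonicity~(\ref{increas'}). Since $\mathcal E_0(\mathbf A,\mathbf a,\mathbf g)\subset\mathcal E(\mathbf A,\mathbf a,\mathbf g)$, the inequality $\|\mathcal E(\mathbf A,\mathbf a,\mathbf g)\|^2\leqslant\|\mathcal E_0(\mathbf A,\mathbf a,\mathbf g)\|^2$ is immediate. So the whole task is the reverse inequality $\|\mathcal E_0(\mathbf A,\mathbf a,\mathbf g)\|^2\leqslant\|\mathcal E(\mathbf A,\mathbf a,\mathbf g)\|^2$, and here we may assume the right-hand side is finite, hence $\mathcal E(\mathbf A,\mathbf a,\mathbf g)\ne\varnothing$.

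First I would fix an arbitrary $\mu\in\mathcal E(\mathbf A,\mathbf a,\mathbf g)$ and an $\varepsilon>0$, and then revisit the construction inside the proof of Lemma~\ref{lemma.cont}: choosing compact sets $K_i^0\subset A_i$ satisfying~(\ref{unif2}) and~(\ref{unif1}), forming $\mathbf K^0=(K_i^0)_{i\in I}$, and for $\mathbf K\succ\mathbf K^0$ forming the normalized truncation $\hat\mu_{\mathbf K}$ of~(\ref{hatmu}). The key point is that, by its very definition, $\hat\mu_{\mathbf K}$ has each coordinate $\hat\mu_{\mathbf K}^i$ supported on the compact set $K_i\subset A_i$; hence $\hat\mu_{\mathbf K}\in\mathcal E(\mathbf K,\mathbf a,\mathbf g)\subset\mathcal E_0(\mathbf A,\mathbf a,\mathbf g)$. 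Consequently $\|\mathcal E_0(\mathbf A,\mathbf a,\mathbf g)\|^2\leqslant\|\hat\mu_{\mathbf K}\|^2$ for every such $\mathbf K$. Passing to the limit along $\mathbf K\uparrow\mathbf A$ and invoking~(\ref{4w}) from that same proof gives $\|\mathcal E_0(\mathbf A,\mathbf a,\mathbf g)\|^2\leqslant\|\mu\|^2$.

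Finally, since $\mu\in\mathcal E(\mathbf A,\mathbf a,\mathbf g)$ was arbitrary, taking the infimum over all such $\mu$ yields $\|\mathcal E_0(\mathbf A,\mathbf a,\mathbf g)\|^2\leqslant\|\mathcal E(\mathbf A,\mathbf a,\mathbf g)\|^2$, which combined with the trivial reverse inequality proves the claimed equality, and hence that $\operatorname{cap}(\mathbf A,\mathbf a,\mathbf g)$ is unchanged.

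I do not expect a genuine obstacle here: the substance of the argument is already contained in the proof of Lemma~\ref{lemma.cont}, and the only thing to observe is that the approximating measures $\hat\mu_{\mathbf K}$ constructed there automatically lie in the smaller class $\mathcal E_0(\mathbf A,\mathbf a,\mathbf g)$ because their coordinates are traces on compact sets. The mildly delicate point — really just bookkeeping — is to make sure one is quoting~(\ref{4w}) rather than re-deriving it, and that the finiteness reduction (assuming $\operatorname{cap}(\mathbf A,\mathbf a,\mathbf g)\ne0$, equivalently $\|\mathcal E(\mathbf A,\mathbf a,\mathbf g)\|^2<\infty$, equivalently $\mathcal E(\mathbf A,\mathbf a,\mathbf g)\ne\varnothing$) is handled exactly as at the start of that proof; if $\mathcal E(\mathbf A,\mathbf a,\mathbf g)=\varnothing$ both sides are $+\infty$ and there is nothing to prove.
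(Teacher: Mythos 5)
Your proposal is correct and follows essentially the same route as the paper: the paper simply invokes the conclusion~(\ref{cont}) of Lemma~\ref{lemma.cont} together with the chain $\|\mathcal E(\mathbf{K},\mathbf{a},\mathbf{g})\|^2\geqslant\|\mathcal E_0(\mathbf{A},\mathbf{a},\mathbf{g})\|^2\geqslant\|\mathcal E(\mathbf{A},\mathbf{a},\mathbf{g})\|^2$, while you re-enter the proof of that lemma and quote the intermediate relation~(\ref{4w}) for the truncations $\hat\mu_{\mathbf K}$, which is the same underlying observation that measures supported by a compact $\mathbf{K}\prec\mathbf{A}$ lie in $\mathcal E_0(\mathbf{A},\mathbf{a},\mathbf{g})$. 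Both arguments handle the degenerate case $\mathcal E(\mathbf{A},\mathbf{a},\mathbf{g})=\varnothing$ identically, so there is nothing substantive to change.
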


\begin{proof} We can assume that $\|\mathcal E(\mathbf{A},\mathbf{a},\mathbf{g})\|^2<\infty$, since
otherwise the corollary follows from $\mathcal
E_0(\mathbf{A},\mathbf{a},\mathbf{g})\subset\mathcal
E(\mathbf{A},\mathbf{a},\mathbf{g})$. Then, by~(\ref{increas'})
and~(\ref{cont}), for every $\varepsilon>0$ there exists a compact
condenser $\mathbf{K}\prec\mathbf{A}$ such that $\|\mathcal
E(\mathbf{K},\mathbf{a},\mathbf{g})\|^2\leqslant\|\mathcal
E(\mathbf{A},\mathbf{a},\mathbf{g})\|^2+\varepsilon$. This proves
the corollary when combined with the inequalities \[\|\mathcal
E(\mathbf{K},\mathbf{a},\mathbf{g})\|^2\geqslant\|\mathcal
E_0(\mathbf{A},\mathbf{a},\mathbf{g})\|^2\geqslant\|\mathcal
E(\mathbf{A},\mathbf{a},\mathbf{g})\|^2.\]\end{proof}

\subsection{When does $0<{\rm cap}\,\mathbf{A}<\infty$ hold?}\label{sec:nondeg}
In all that follows it is required that
\begin{equation}
{\rm cap}\,(\mathbf{A},\mathbf{a},\mathbf{g})>0.\label{nonzero1}
\end{equation}

\begin{lemma}\label{lemma:nonzero} For {\rm(\ref{nonzero1})} to hold, it is
necessary and sufficient that any of the following three conditions
be satisfied:
\begin{itemize}
\item[\rm(i)] $\mathcal E(\mathbf{A},\mathbf{a},\mathbf{g})$ is nonempty;
\item[\rm(ii)] there exist $\nu_i\in\mathcal E^+(A_i,a_i,g_i)$ for all
$i\in I$ such that $\sum_{i\in I}\,\|\nu_i\|^2<\infty$;
\item[\rm(iii)] $\sum_{i\in I}\,\|\mathcal
E^+(A_i,a_i,g_i)\|^2<\infty$.
\end{itemize}
\end{lemma}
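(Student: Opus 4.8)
The plan is to prove the chain of implications (i)~$\Rightarrow$~(ii)~$\Rightarrow$~(iii)~$\Rightarrow$~(i), and then close the loop by showing that (\ref{nonzero1}) is equivalent to~(i). The equivalence of (\ref{nonzero1}) and~(i) is almost immediate from the definition of capacity: ${\rm cap}\,\mathbf{A}=1\bigl/\|\mathcal E(\mathbf{A},\mathbf{a},\mathbf{g})\|^2$ with the convention $1/0=+\infty$, and $\|\mathcal E(\mathbf{A},\mathbf{a},\mathbf{g})\|^2=+\infty$ precisely when $\mathcal E(\mathbf{A},\mathbf{a},\mathbf{g})$ is empty (recall that $\|\mathcal H\|^2$ is interpreted as $+\infty$ for empty~$\mathcal H$); since the energy of any $\mu\in\mathcal E(\mathbf{A})$ is finite and nonnegative by positive definiteness, nonemptiness forces $\|\mathcal E(\mathbf{A},\mathbf{a},\mathbf{g})\|^2<\infty$, hence (\ref{nonzero1}).

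For (i)~$\Rightarrow$~(ii): pick $\mu\in\mathcal E(\mathbf{A},\mathbf{a},\mathbf{g})$ and set $\nu_i:=\mu^i$. By definition of $\mathcal E(\mathbf{A},\mathbf{a},\mathbf{g})$ each $\mu^i\in\mathcal E^+(A_i,a_i,g_i)$, and by Corollary~\ref{finite} the finiteness of the energy of~$\mu$ gives $\sum_{i\in I}\|\mu^i\|^2<\infty$. For (ii)~$\Rightarrow$~(iii): for each $i$, $\nu_i\in\mathcal E^+(A_i,a_i,g_i)$ yields $\|\mathcal E^+(A_i,a_i,g_i)\|^2\leqslant\|\nu_i\|^2$, so $\sum_{i\in I}\|\mathcal E^+(A_i,a_i,g_i)\|^2\leqslant\sum_{i\in I}\|\nu_i\|^2<\infty$.

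The only implication requiring a genuine argument is (iii)~$\Rightarrow$~(i): from $\sum_{i\in I}\|\mathcal E^+(A_i,a_i,g_i)\|^2<\infty$ I must manufacture a single associated measure of finite energy in $\mathcal E(\mathbf{A},\mathbf{a},\mathbf{g})$. First, finiteness of the sum forces each summand to be finite, so for every $i$ one can choose $\nu_i\in\mathcal E^+(A_i,a_i,g_i)$ with $\|\nu_i\|^2\leqslant\|\mathcal E^+(A_i,a_i,g_i)\|^2+2^{-i}$; then $\sum_{i\in I}\|\nu_i\|^2<\infty$. Now set $\mu:=\sum_{i\in I}\alpha_i\nu_i$; this is a well-defined element of $\mathfrak M(\mathbf{A})$ (its $i$-coordinate is $\nu_i$), it satisfies $\int g_i\,d\mu^i=a_i$ for all~$i$ because each $\nu_i$ does, and by Corollary~\ref{finite} the condition $\nu_i\in\mathcal E$ for all~$i$ together with $\sum_{i\in I}\|\nu_i\|^2<\infty$ guarantees $\mu\in\mathcal E(\mathbf{A})$. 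Hence $\mu\in\mathcal E(\mathbf{A},\mathbf{a},\mathbf{g})$, which is therefore nonempty. The main point to handle with care here is that each $\mathcal E^+(A_i,a_i,g_i)$ is actually nonempty whenever its "norm" $\|\mathcal E^+(A_i,a_i,g_i)\|^2$ is finite — but this is exactly the convention that $\|\mathcal H\|^2=+\infty$ for empty~$\mathcal H$, so finiteness of the $i$-th term already delivers an admissible~$\nu_i$. No compactness or consistency hypotheses are needed; the argument rests solely on positive definiteness (for nonnegativity of energies) and Corollary~\ref{finite}.
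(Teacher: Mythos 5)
Your proof is correct and follows essentially the same route as the paper: the equivalence of (\ref{nonzero1}) with (i), the use of Corollary~\ref{finite} to pass between (i) and (ii), and the selection of near-minimizers $\nu_i$ with summably small excess (your $2^{-i}$ versus the paper's $i^{-2}$) to get from (iii) back to (ii)/(i). The only difference is cosmetic: you fold the step (ii)~$\Rightarrow$~(i) into your (iii)~$\Rightarrow$~(i) argument, whereas the paper states (i)~$\Leftrightarrow$~(ii) separately.
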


\begin{proof} The equivalency of (\ref{nonzero1}) and
(i) is obvious, while that of~(i) and~(ii) can be obtained directly
from Corollary~\ref{finite}. If (iii)~is true, then for every $i\in
I$ one can choose $\nu_i\in\mathcal E^+(A_i,a_i,g_i)$ so that
$\|\nu_i\|^2<\|\mathcal E^+(A_i,a_i,g_i)\|^2+i^{-2}$, and (ii)
follows. Since (ii)~obviously yields~(iii), the proof is
complete.\end{proof}

Let $C(\,\cdot\,)$ denote the {\it interior capacity\/} of a set
with respect to the kernel~$\kappa$~\cite{F1}.

\begin{corollary}\label{cor:nonzero} For {\rm(\ref{nonzero1})} to be
satisfied, it is necessary that
\begin{equation}
C(A_i)>0\quad\mbox{for all \ } i\in I. \label{cap2}
\end{equation}
If $\mathbf{A}$ is finite, then {\rm(\ref{nonzero1})} and
{\rm(\ref{cap2})} are actually equivalent\,\footnote{However,
(\ref{nonzero1}) and (\ref{cap2}) are no longer equivalent if
$\mathbf{A}$ is infinite~---
cf.~Corollary~\ref{lemma:nonzero.infinite}.}.\end{corollary}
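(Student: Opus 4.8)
The plan is to deduce both assertions from Lemma~\ref{lemma:nonzero} combined with the standard description of the interior capacity of a set (Fuglede~\cite{F1}): for a closed set~$E$ one has $C(E)>0$ if and only if $E$ carries a nonzero measure of finite energy, equivalently, some compact $K\subset E$ does. I shall use that the passage between these two formulations rests on the fact that the trace $\nu_K$ of a finite-energy measure upon a compact set~$K$ is again of finite energy --- this is Lemma~1.2.2 of~\cite{F1}, already invoked in the proof of Lemma~\ref{lemma.cont} --- together with the trivial remark that $\nu_K\ne0$ for a suitably chosen compact~$K$ whenever $\nu\ne0$ (and, for interior capacity, that $C(E)=\sup\{C(K):K\subset E\text{ compact}\}$).

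For the necessity of~(\ref{cap2}) I would argue as follows. Assuming~(\ref{nonzero1}), the equivalence of~(\ref{nonzero1}) with condition~(i) of Lemma~\ref{lemma:nonzero} yields a measure $\mu\in\mathcal E(\mathbf{A},\mathbf{a},\mathbf{g})$. Then, for every $i\in I$, the coordinate $\mu^i$ lies in $\mathcal E^+(A_i,a_i,g_i)$, hence is a finite-energy measure supported by~$A_i$; moreover $\mu^i\ne0$, since $\int g_i\,d\mu^i=a_i>0$ while $g_i>0$. Thus $A_i$ carries a nonzero measure of finite energy, so $C(A_i)>0$, and as $i$ is arbitrary, (\ref{cap2}) follows.

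For the converse, under the assumption that $\mathbf{A}$ is finite, I would fix $i\in I$ and use $C(A_i)>0$ to choose a nonzero $\sigma_i\in\mathcal E^+(A_i)$. Next I would pick a compact $K_i$ with $\sigma_i(K_i)>0$ and replace $\sigma_i$ by its trace $\tau_i:=(\sigma_i)_{K_i}$, which is a nonzero, compactly supported measure that still has finite energy by the trace lemma. Since $g_i$ is continuous and strictly positive, $\int g_i\,d\tau_i$ is then finite and positive, so $\nu_i:=(a_i/\!\int g_i\,d\tau_i)\,\tau_i$ belongs to $\mathcal E^+(A_i,a_i,g_i)$. As $I$ is finite, $\sum_{i\in I}\|\nu_i\|^2<\infty$ automatically, which is condition~(ii) of Lemma~\ref{lemma:nonzero}; hence~(\ref{nonzero1}). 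Combined with the necessity established above, this gives the equivalence of~(\ref{nonzero1}) and~(\ref{cap2}) for finite~$\mathbf{A}$.

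I expect the only delicate point to be this last construction: the hypothesis $C(A_i)>0$ only guarantees \emph{some} finite-energy measure on~$A_i$, which may fail to be compactly supported and may have infinite $g_i$-mass, so one must first cut it down to a compactly supported trace (using that traces of finite-energy measures have finite energy) before normalizing by~$g_i$. Everything else is routine bookkeeping with Lemma~\ref{lemma:nonzero} and the finiteness of~$I$.
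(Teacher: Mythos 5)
Your proof is correct and takes essentially the same route as the paper: both directions reduce to Lemma~\ref{lemma:nonzero} combined with Fuglede's characterization of $C(A_i)>0$ through nonzero finite-energy, compactly supported measures, with the finiteness of $I$ supplying the summability needed for the converse. Your explicit trace-and-normalize step merely spells out what the paper's appeal to \cite[Lemma~2.3.1]{F1} leaves implicit.
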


\begin{proof} For Lemma~\ref{lemma:nonzero}, (ii) to hold, it is necessary that,
for every $i\in I$, there exists a nonzero nonnegative measure of
finite energy, compactly supported in~$A_i$, which in turn is
equivalent to~(\ref{cap2}) according to~\cite[Lemma~2.3.1]{F1}.
Since the former implication can obviously be inverted if $\mathbf
A$ is finite, the proof is complete.\end{proof}

Let $g_{i,\inf}$ and $g_{i,\sup}$ be the infimum and the supremum
of~$g_i$ over~$A_i$. Also write
\[\mathbf{g}_{\inf}:=\inf_{i\in I}\,g_{i,\inf},\qquad\mathbf{g}_{\sup}:=\sup_{i\in
I}\,g_{i,\sup}.\]

\begin{corollary}\label{lemma:nonzero.infinite} Assume $0<\mathbf{g}_{\inf}\leqslant
\mathbf{g}_{\sup}<\infty$. Then {\rm(\ref{nonzero1})} holds if and
only if
\begin{equation*}\sum_{i\in I}\,\frac{a_i^2}{C(A_i)}<\infty.\end{equation*}\end{corollary}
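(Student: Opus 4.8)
The plan is to deduce the statement from Lemma~\ref{lemma:nonzero}, using the characterization~(iii), namely that (\ref{nonzero1}) holds if and only if $\sum_{i\in I}\|\mathcal E^+(A_i,a_i,g_i)\|^2<\infty$. The essential point will be to compare the energy $\|\mathcal E^+(A_i,a_i,g_i)\|^2$ with the set capacity $C(A_i)$, uniformly in $i$, under the hypothesis $0<\mathbf g_{\inf}\leqslant\mathbf g_{\sup}<\infty$. Recall from Fuglede~\cite{F1} (e.g.\ \cite[Lemma~2.3.1]{F1} and the definition of $C(\cdot)$) that for a closed set $F$, the interior capacity satisfies $C(F)^{-1}=\inf\,\|\nu\|^2$ over all $\nu\in\mathcal E^+(F)$ with $\nu(F)=1$; equivalently, $\|\mathcal E^+(F,1,1)\|^2=1/C(F)$ where $g\equiv1$.

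First I would reduce $\|\mathcal E^+(A_i,a_i,g_i)\|^2$ to the case $g_i\equiv1$ by a squeezing argument. If $\nu\in\mathcal E^+(A_i,a_i,g_i)$, so $\int g_i\,d\nu=a_i$, then $g_{i,\inf}\,\nu(A_i)\leqslant a_i\leqslant g_{i,\sup}\,\nu(A_i)$, hence $a_i/g_{i,\sup}\leqslant\nu(A_i)\leqslant a_i/g_{i,\inf}$. Conversely, given any nonzero $\sigma\in\mathcal E^+(A_i)$, the normalization $\nu:=(a_i/\!\int g_i\,d\sigma)\,\sigma$ lies in $\mathcal E^+(A_i,a_i,g_i)$ and $\|\nu\|^2=(a_i/\!\int g_i\,d\sigma)^2\|\sigma\|^2$. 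Minimizing, and using $g_{i,\inf}\,\sigma(A_i)\leqslant\int g_i\,d\sigma\leqslant g_{i,\sup}\,\sigma(A_i)$, one gets the two-sided bound
\[
\frac{a_i^2}{g_{i,\sup}^2}\cdot\frac{1}{C(A_i)}\;\leqslant\;\|\mathcal E^+(A_i,a_i,g_i)\|^2\;\leqslant\;\frac{a_i^2}{g_{i,\inf}^2}\cdot\frac{1}{C(A_i)},
\]
where the infimum over normalized $\sigma$ of $\|\sigma\|^2/\sigma(A_i)^2$ equals $1/C(A_i)$ by the definition of interior capacity recalled above. Since $0<\mathbf g_{\inf}\leqslant g_{i,\inf}\leqslant g_{i,\sup}\leqslant\mathbf g_{\sup}<\infty$ uniformly in $i$, this yields
\[
\frac{1}{\mathbf g_{\sup}^2}\sum_{i\in I}\frac{a_i^2}{C(A_i)}\;\leqslant\;\sum_{i\in I}\|\mathcal E^+(A_i,a_i,g_i)\|^2\;\leqslant\;\frac{1}{\mathbf g_{\inf}^2}\sum_{i\in I}\frac{a_i^2}{C(A_i)}.
\]
Hence the series $\sum_i\|\mathcal E^+(A_i,a_i,g_i)\|^2$ converges if and only if $\sum_i a_i^2/C(A_i)$ converges, and the conclusion follows from the equivalence of~(\ref{nonzero1}) with condition~(iii) of Lemma~\ref{lemma:nonzero}.

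The step I expect to be the main obstacle is establishing cleanly that $\inf_\sigma \|\sigma\|^2/\sigma(A_i)^2 = 1/C(A_i)$, i.e.\ that one may pass between the normalization $\sigma(A_i)=1$ (used in Fuglede's definition of $C$) and an arbitrary normalization, and in particular handling the degenerate cases: if $C(A_i)=0$ then $\mathcal E^+(A_i,a_i,g_i)$ contains no nonzero measure of finite energy with the right total mass (indeed $\|\mathcal E^+(A_i,a_i,g_i)\|^2=+\infty$, consistent with the interpretation $a_i^2/C(A_i)=+\infty$ and with Corollary~\ref{cor:nonzero}), while if some $A_i$ is noncompact one must make sure the infimum defining $C(A_i)$ is still computed over all of $\mathcal E^+(A_i)$ rather than only compactly supported measures — but this is exactly the notion of \emph{interior} capacity $C(\cdot)$ from~\cite{F1} invoked before Corollary~\ref{cor:nonzero}, so no extra work is needed. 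All remaining manipulations are the routine scaling identities indicated above.
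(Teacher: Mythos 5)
Your proposal is correct and follows essentially the same route as the paper: it reduces the statement to condition~(iii) of Lemma~\ref{lemma:nonzero} and then establishes the same two-sided comparison $\frac{a_i^2}{g_{i,\sup}^2\,C(A_i)}\leqslant\|\mathcal E^+(A_i,a_i,g_i)\|^2\leqslant\frac{a_i^2}{g_{i,\inf}^2\,C(A_i)}$ (the paper's inequality~(\ref{5w})) by the same scaling/homogeneity argument between $\mathcal E^+(A_i,a_i,g_i)$ and the unit-mass class $\mathcal E^+(A_i,1,1)$, with the degenerate case $C(A_i)=0$ handled, as in the paper, via Corollary~\ref{cor:nonzero} applied to $I=\{i\}$.
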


\begin{proof} Lemma~\ref{lemma:nonzero}, (iii) implies the corollary when combined
with the inequalities
\begin{equation}
\frac{a^2_i}{g_{i,\sup}^2\,C(A_i)}\leqslant\|\mathcal
E^+(A_i,a_i,g_i)\|^2\leqslant\frac{a^2_i}{g_{i,\inf}^2\,C(A_i)}\,,\quad
i\in I,\label{5w}
\end{equation}
to be proved below by reasons of homogeneity.

To establish (\ref{5w}), fix $i\in I$. One can certainly assume
$C(A_i)$ to be nonzero, for otherwise Corollary~\ref{cor:nonzero}
with $I=\{i\}$ shows that each of the three parts in~(\ref{5w})
equals~$+\infty$. Consequently, there exists $\theta_i\in\mathcal
E^+(A_i,1,1)$. Since
$$\hat{\theta}_i:=\frac{a_i\theta_i}{\int g_i\,d\theta_i}\in\mathcal
E^+(A_i,a_i,g_i),$$ we get $$a_i^2\,\|\theta_i\|^2\geqslant
g_{i,\inf}^2\,\|\hat{\theta}_i\|^2\geqslant g_{i,\inf}^2\,\|\mathcal
E^+(A_i,a_i,g_i)\|^2,$$ and the right-hand side in~(\ref{5w}) is
obtained by letting $\theta_i$ range over $\mathcal E^+(A_i,1,1)$.

To verify the left-hand side, fix $\omega_i\in\mathcal
E^+(A_i,a_i,g_i)$. Then
$$0<a_i\,g_{i,\sup}^{-1}\leqslant\omega_i(\mathrm X)\leqslant
a_i\,g_{i,\inf}^{-1}<\infty.$$ Hence, $\omega_i/\omega_i(\mathrm
X)\in\mathcal E^+(A_i,1,1)$ and
$$\|\omega_i\|^2\geqslant{a_i^2}\,{g_{i,\sup}^{-2}}\,\|\mathcal E^+(A_i,1,1)\|^2.$$
In view of the arbitrary choice of $\omega_i\in\mathcal
E^+(A_i,a_i,g_i)$, this completes the proof.\end{proof}

In the following assertion, providing necessary conditions for ${\rm
cap}\,\mathbf{A}$ to be finite, it is assumed that $g_{i,\inf}>0$
for all $i\in I$.

\begin{lemma}\label{lemma:finite} If ${\rm cap}\,(\mathbf{A},\mathbf{a},\mathbf{g})<\infty$,
then there exists $j\in I$ with $C(A_j)<\infty$.
\end{lemma}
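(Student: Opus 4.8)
The plan is to argue by contraposition: I will assume that $C(A_i)=\infty$ for every $i\in I$ and show that then ${\rm cap}\,(\mathbf A,\mathbf a,\mathbf g)=\infty$, i.e. $\|\mathcal E(\mathbf A,\mathbf a,\mathbf g)\|^2=0$. To produce measures in $\mathcal E(\mathbf A,\mathbf a,\mathbf g)$ of arbitrarily small energy, I would exploit the fact that $C(A_i)=\infty$ means one can find, for each $i$ and each $\varepsilon>0$, a measure $\nu_i\in\mathcal E^+(A_i,a_i,g_i)$ with $\|\nu_i\|^2$ as small as we please. This is essentially the content of the right-hand inequality in Corollary~\ref{lemma:nonzero.infinite}, but one has to be slightly careful since that corollary was proved under the extra hypothesis $0<\mathbf g_{\inf}\leqslant\mathbf g_{\sup}<\infty$; the relevant part, namely that $\|\mathcal E^+(A_i,a_i,g_i)\|^2\leqslant a_i^2\bigl/\bigl(g_{i,\inf}^2\,C(A_i)\bigr)$, needs only $g_{i,\inf}>0$ (which is assumed here) and follows from the same homogeneity argument: rescale a low-energy measure $\theta_i\in\mathcal E^+(A_i,1,1)$ to $\widehat\theta_i:=a_i\theta_i\bigl/\!\int g_i\,d\theta_i$ and use $\int g_i\,d\theta_i\geqslant g_{i,\inf}\,\theta_i(\mathrm X)=g_{i,\inf}$. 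Thus $C(A_i)=\infty$ forces $\|\mathcal E^+(A_i,a_i,g_i)\|^2=0$ for every $i$.

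Next I would assemble a single associated measure with small energy out of the coordinatewise small-energy measures. Fix $\varepsilon>0$; for each $i\in I$ choose $\nu_i\in\mathcal E^+(A_i,a_i,g_i)$ with $\|\nu_i\|^2<\varepsilon\,2^{-i}$ (here I use the indexing $i\in\mathbb N$, so $\sum_i 2^{-i}\leqslant 1$), and set $\nu:=\sum_{i\in I}\alpha_i\nu_i\in\mathfrak M(\mathbf A,\mathbf a,\mathbf g)$. By Corollary~\ref{finite}, $\nu\in\mathcal E(\mathbf A)$ since $\sum_i\|\nu_i\|^2<\varepsilon<\infty$; moreover $\nu\in\mathcal E(\mathbf A,\mathbf a,\mathbf g)$ by construction. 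Finally, from $\|\nu\|^2=\|R\nu\|^2$ and the triangle inequality for the seminorm $\|\cdot\|_{\mathcal E}$ applied to $R\nu=\sum_i\alpha_i\nu_i$ (or, equivalently, from Corollary~\ref{finite} again, bounding $\|\nu\|$ by $\bigl\|\sum_{i\in I^+}\nu_i\bigr\|+\bigl\|\sum_{i\in I^-}\nu_i\bigr\|$ and then each term by $\sum_i\|\nu_i\|$ via repeated use of $2\kappa(\nu_i,\nu_j)\leqslant\|\nu_i\|^2+\|\nu_j\|^2$), one gets $\|\nu\|\leqslant\sum_{i\in I}\|\nu_i\|\leqslant\sum_{i\in I}\sqrt{\varepsilon\,2^{-i}}$, which tends to $0$ with $\varepsilon$. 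Hence $\|\mathcal E(\mathbf A,\mathbf a,\mathbf g)\|^2=0$, i.e. ${\rm cap}\,(\mathbf A,\mathbf a,\mathbf g)=\infty$, contradicting the hypothesis; therefore some $j\in I$ must satisfy $C(A_j)<\infty$.

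The only genuinely delicate point is the triangle-type estimate for $\|\nu\|$ in an infinite sum: the seminorm bound $\bigl\|\sum_i\alpha_i\nu_i\bigr\|\leqslant\sum_i\|\nu_i\|$ has to be justified as a limit of finite partial sums, using that the finite partial sums $\sum_{i\leqslant N}\nu_i$ converge vaguely to $R\nu^{\pm}$ (for the positive/negative parts) together with lower semicontinuity of the energy, or else via the isometry between $\mathcal E(\mathbf A)$ and its $R$-image in the pre-Hilbert space $\mathcal E$, where the ordinary triangle inequality and completeness give the bound directly. Everything else — the rescaling producing $\mathcal E^+(A_i,a_i,g_i)$ from $\mathcal E^+(A_i,1,1)$, and the passage $C(A_i)=\infty\Rightarrow\|\mathcal E^+(A_i,a_i,g_i)\|^2=0$ — is a routine homogeneity computation already carried out in the proof of Corollary~\ref{lemma:nonzero.infinite}.
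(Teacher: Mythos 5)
Your argument is correct and is essentially the paper's own proof: assuming $C(A_i)=\infty$ for every $i$, one picks unit measures on each $A_i$ of arbitrarily small energy, rescales by $a_i\bigl/\!\int g_i\,d\mu^i$ using $\int g_i\,d\mu^i\geqslant g_{i,\inf}>0$, and sums with summable weights (the paper uses $i^{-2}$, you use $2^{-i}$) to produce elements of $\mathcal E(\mathbf{A},\mathbf{a},\mathbf{g})$ of arbitrarily small norm, the termwise Cauchy--Schwarz bound on $\sum_{i,j}\alpha_i\alpha_j\kappa(\nu_i,\nu_j)$ settling the infinite triangle inequality you flag. The one detail worth adding (and which the paper does include) is to take the unit measures $\theta_i\in\mathcal E^+(A_i,1,1)$ \emph{compactly supported} in $A_i$ --- this is exactly what the definition of interior capacity supplies, and it guarantees $\int g_i\,d\theta_i<\infty$ even when $g_i$ is unbounded on $A_i$, so that the rescaled measures genuinely belong to $\mathcal E^+(A_i,a_i,g_i)$.
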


\begin{proof}Under the assumptions of the lemma, suppose $C(A_i)=\infty$ for all $i\in I$.
Given $\varepsilon>0$,
for every~$i\in I$ one can choose $\mu^i\in\mathcal E^+(A_i,1,1)$
with compact support so that $\|\mu^i\|\leqslant\varepsilon
a_i^{-1}i^{-2}\,g_{i,\inf}$. Since then \[ \hat{\mu}:=\sum_{i\in
I}\,\frac{\alpha_i a_i\mu^i}{\int g_i\,d\mu^i}\in\mathcal
E(\mathbf{A},\mathbf{a},\mathbf{g})\] and
$\|\hat{\mu}\|\leqslant\varepsilon\sum_{i\in I}\,i^{-2}$, we arrive
at a contradiction by letting $\varepsilon\to0$.\end{proof}

\begin{remark}\label{ns}It will be shown by Corollary~\ref{cor:j} below that,
under certain additional restrictions, Lemma~\ref{lemma:finite} can
be inverted.\end{remark}

\section{Minimizing measures on condensers: existence, uniqueness,
$\mathbf{A}$-vague compactness, continuity}\label{sec:5}

Because of~(\ref{nonzero1}), we are naturally led to the $\mathcal
E(\mathbf{A},\mathbf{a},\mathbf{g})$-{\it problem\/}
(cf.~Sec.~\ref{sec:4}), i.\,e., the problem on the existence of
$\lambda=\lambda_{\mathbf{A}}\in\mathcal
E(\mathbf{A},\mathbf{a},\mathbf{g})$ with minimal energy
\[\|\lambda_{\mathbf{A}}\|^2=\|\mathcal E(\mathbf{A},\mathbf{a},\mathbf{g})\|^2.\] Let
$\mathfrak S(\mathbf{A},\mathbf{a},\mathbf{g})$ denote the class of
all minimizers~$\lambda_{\mathbf{A}}$.

\subsection{Uniqueness properties of $\lambda_{\mathbf{A}}$} We start by observing that any two
minimizers (if exist) are {\it equivalent in\/}~$\mathcal
E(\mathbf{A})$, i.\,e.,
\begin{equation*}\|\lambda_1-\lambda_2\|=0\quad\mbox{for all \
}\lambda_1,\,\lambda_2\in\mathfrak
S(\mathbf{A},\mathbf{a},\mathbf{g}).\label{uniqueness}\end{equation*}
Indeed, this follows from the convexity of $\mathcal
E(\mathbf{A},\mathbf{a},\mathbf{g})$ and the parallelogram identity
in~$\mathcal E$, applied to~$R\lambda_1$ and~$R\lambda_2$. Thus,
$\lambda_1\cong\lambda_2$ provided the kernel~$\kappa$ is strictly
positive definite, and $\lambda_1\equiv\lambda_2$ if, moreover, all
$A_i$, $i\in I$, are mutually disjoint.

What about the existence of minimizers? Assume for a moment that
$\mathbf A$~is {\it finite} and {\it compact} and that $\kappa$ is
{\it continuous on\/} $A^+\times A^-$. Then $\mathfrak
M(\mathbf{A},\mathbf{a},\mathbf{g})$ is $\mathbf{A}$-va\-gue\-ly
compact while $\|\mu\|^2$ is $\mathbf{A}$-va\-gue\-ly lower
semicontinuous on~$\mathcal E(\mathbf{A})$ and, therefore, the
solvability of the $\mathcal
E(\mathbf{A},\mathbf{a},\mathbf{g})$-problem follows.
See~\cite[Th.~2.30]{O}; cf.~also~\cite{GR0,GR,NS,ST}, related to the
logarithmic kernel in the plane.

However, these arguments break down if any of the above three
assumptions is dropped. In particular, $\mathfrak
M(\mathbf{A},\mathbf{a},\mathbf{g})$ is no longer
$\mathbf{A}$-vaguely compact if $\mathbf{A}$ is noncompact.

To solve the problem on the existence of
minimizers~$\lambda_{\mathbf{A}}$ in the general case where a
condenser $\mathbf{A}$ is infinite and (or) noncompact, we develop
an approach based on both the $\mathbf{A}$-vague and strong
topologies in the semimetric space~$\mathcal E(\mathbf{A})$,
introduced for finite condensers in~\cite{Z3,Z4,Z6}.

\subsection{Standing assumptions}\label{sec:standing}

Unless explicitly stated otherwise, in all that follows it is
required that the kernel~$\kappa$ is consistent and either
$I^-=\varnothing$, or the following conditions are both satisfied:
\begin{equation}\label{s}\sum_{i\in
I}\,a_ig_{i,\inf}^{-1}<\infty,\end{equation}
\begin{equation}\sup_{x\in A^+,\ y\in
A^-}\,\kappa(x,y)<\infty.\label{bou}
\end{equation}

\begin{remark}
These assumptions on a kernel are not too restrictive. In
particular, they all are satisfied by the Newtonian, Riesz, or Green
kernels in~$\mathbb R^n$, $n\geqslant2$, provided the Euclidean
distance between $A^+$ and $A^-$ is nonzero.\end{remark}

\subsection{Existence of minimizers $\lambda_{\mathbf{A}}$. $\mathbf A$-vague compactness}
A proposition $R(x)$ involving a variable point $x\in\mathbf X$ is
said to subsist {\it nearly everywhere\/}~(n.\,e.) in~$E$, where $E$
is a given subset of~$\mathbf X$, if the set of all $x\in E$ for
which $R(x)$ fails to hold is of interior capacity zero~\cite{F1}.

\begin{theorem}\label{exist}
For every $i\in I$, assume that either $g_{i,\sup}<\infty$, or there
exist $r_i\in(1,\infty)$ and $\omega_i\in\mathcal E$ such that
\begin{equation}
g^{r_i}(x)\leqslant\kappa(x,\omega_i)\quad\mbox{n.\,e. in \ } A_i.
\label{growth}
\end{equation}
If, moreover, $A_i$ either is compact or has finite
capacity\,\footnote{Note that a compact set $K\subset\mathbf X$
might be of infinite capacity; $C(K)$ is necessarily finite provided
the kernel is strictly positive definite~\cite{F1}. On the other
hand, even for the Newtonian kernel, sets of finite capacity might
be noncompact (see~\cite{L}).}
\begin{equation*} C(A_i)<\infty, \label{capacityfinite}
\end{equation*} then for any vector $\mathbf{a}$ the class
$\mathfrak S(\mathbf{A},\mathbf{a},\mathbf{g})$ is nonempty and
$\mathbf{A}$-vaguely compact.
\end{theorem}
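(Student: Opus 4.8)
The plan is to establish $\mathbf{A}$-vague compactness of $\mathfrak{S}(\mathbf{A},\mathbf{a},\mathbf{g})$ first and then exploit the completeness of a suitable topological subspace of $\mathcal{E}(\mathbf{A})$ to squeeze out a strong limit that actually solves the problem. First I would show that the set $\mathfrak{S}(\mathbf{A},\mathbf{a},\mathbf{g})$, and more usefully the larger set $\mathcal{E}_{\leqslant}(\mathbf{A},\mathbf{a},\mathbf{g})$ of all $\mu\in\mathcal{E}(\mathbf{A})$ with $\int g_i\,d\mu^i\leqslant a_i$ for all $i$ and bounded energy, is $\mathbf{A}$-vaguely bounded: since each $g_i$ is continuous and strictly positive, on any compact set $K$ meeting only finitely many $A_i$ one can bound $\mu^i(\varphi)$ for $\varphi\in\mathrm{C}_0(\mathrm X)$ by a multiple of $\int g_i\,d\mu^i\leqslant a_i$ via $g_{i,\inf}$ restricted to $S(\varphi)$ (this uses the compactness-of-support-meets-finitely-many-$A_i$ property). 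Hence by Lemma~\ref{lem:vaguecomp} this set is $\mathbf{A}$-vaguely relatively compact. The role of the growth hypothesis \eqref{growth} (or $g_{i,\sup}<\infty$) is exactly to guarantee $\mathbf{A}$-vague \emph{closedness} of the constraint $\int g_i\,d\mu^i=a_i$: when $g_{i,\sup}<\infty$ this is routine lower/upper semicontinuity of $\mu^i\mapsto\int g_i\,d\mu^i$; in the unbounded case one writes $g_i^{r_i}(x)\leqslant\kappa(x,\omega_i)$ n.e.\ on $A_i$ and uses that $\kappa(\cdot,\omega_i)$ is a potential of a finite-energy measure, so that uniform integrability of $g_i^{r_i}$ (hence of $g_i$ by $r_i>1$ and a Hölder-type argument) against an $\mathbf{A}$-vaguely convergent bounded net forces $\int g_i\,d\mu^i$ to pass to the limit — this prevents escape of mass to infinity in the normalization.

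Next I would take a minimizing net $(\mu_s)$ in $\mathcal{E}(\mathbf{A},\mathbf{a},\mathbf{g})$, i.e.\ $\|\mu_s\|^2\to\|\mathcal{E}(\mathbf{A},\mathbf{a},\mathbf{g})\|^2=1/{\rm cap}\,\mathbf{A}<\infty$ (finiteness of the capacity is where the hypothesis "$A_i$ compact or $C(A_i)<\infty$" enters, via Lemma~\ref{lemma:finite} and Corollary~\ref{cor:j}/Remark~\ref{ns}; combined with the nondegeneracy \eqref{nonzero1} that we always assume, the class is nonempty and the infimum is a finite positive quantity). By the relative compactness just established, $(\mu_s)$ has an $\mathbf{A}$-vague cluster point $\lambda$; passing to a subnet, $\mu_s\to\lambda$ $\mathbf{A}$-vaguely. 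By the closedness of the normalization constraints, $\lambda\in\mathfrak{M}(\mathbf{A},\mathbf{a},\mathbf{g})$. The energy functional $\mu\mapsto\|\mu\|^2$ is $\mathbf{A}$-vaguely lower semicontinuous on $\mathcal{E}(\mathbf{A})$ — this follows from \eqref{ener}, Lemma~\ref{lemma:lower}, and the boundedness hypothesis \eqref{bou} on $\kappa$ over $A^+\times A^-$ (when $I^-\neq\varnothing$), which controls the cross terms $\alpha_i\alpha_j\kappa(\mu^i,\mu^j)$ with opposite signs so that only the lower-semicontinuous "diagonal-like" part can drop in the limit. Therefore $\|\lambda\|^2\leqslant\liminf\|\mu_s\|^2=\|\mathcal{E}(\mathbf{A},\mathbf{a},\mathbf{g})\|^2$, and since $\lambda$ lies in the competing class the reverse inequality is trivial; hence $\lambda\in\mathfrak{S}(\mathbf{A},\mathbf{a},\mathbf{g})$, proving nonemptiness.

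For $\mathbf{A}$-vague compactness of $\mathfrak{S}(\mathbf{A},\mathbf{a},\mathbf{g})$ itself: it is contained in the $\mathbf{A}$-vaguely relatively compact set $\mathcal{E}_{\leqslant}(\mathbf{A},\mathbf{a},\mathbf{g})$, so it suffices to show $\mathfrak{S}$ is $\mathbf{A}$-vaguely closed. Let $(\lambda_t)$ be a net in $\mathfrak{S}$ with $\lambda_t\to\lambda$ $\mathbf{A}$-vaguely; by the constraint-closedness $\lambda$ satisfies the normalization, and by lower semicontinuity of the energy $\|\lambda\|^2\leqslant\liminf\|\lambda_t\|^2=\|\mathcal{E}(\mathbf{A},\mathbf{a},\mathbf{g})\|^2$, whence $\lambda\in\mathfrak{S}$. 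This closes the argument. I expect the main obstacle to be the unbounded-$g_i$ case of \eqref{growth}: one must show that $\int g_i\,d\mu^i$ is $\mathbf{A}$-vaguely continuous (not merely lower semicontinuous) along strongly-bounded minimizing nets, and the cleanest route is probably to first dominate $g_i$ by the potential $\kappa(\cdot,\omega_i)$ n.e., then use the consistency of $\kappa$ together with the uniform bound on energies (via Corollary~\ref{finite}, $\sum_i\|\mu_s^i\|^2$ is bounded) to get that the $\mu_s^i$ are, coordinatewise, strongly bounded, and finally invoke property (C$_2$) and a Cauchy–Schwarz estimate $\int g_i\,d\mu^i\leqslant\bigl(\int g_i^{r_i}\,d\mu^i\bigr)^{1/r_i}\mu^i(\mathrm X)^{1/r_i'}\leqslant\kappa(\mu^i,\omega_i)^{1/r_i}\,(a_i/g_{i,\inf})^{1/r_i'}$ to transfer weak convergence $\kappa(\mu_s^i,\omega_i)\to\kappa(\lambda^i,\omega_i)$ into the desired continuity, ruling out loss of normalization in the limit. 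The sign conditions \eqref{s}, \eqref{bou} and the compact/finite-capacity dichotomy should otherwise be purely bookkeeping once these two semicontinuity facts are in hand.
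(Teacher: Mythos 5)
Your plan is essentially the classical direct method (vague compactness of the constraint set plus vague lower semicontinuity of the energy), and its two load-bearing claims are exactly the ones that fail in the setting of this theorem. First, the asserted $\mathbf A$-vague lower semicontinuity of $\mu\mapsto\|\mu\|^2$ on $\mathcal E(\mathbf A)$ "because (\ref{bou}) controls the cross terms" is unjustified: writing $\|\mu\|^2=\|R\mu^+\|^2+\|R\mu^-\|^2-2\kappa(R\mu^+,R\mu^-)$, the lower semicontinuity of $\kappa$ gives $\liminf_s\kappa(R\mu_s^+,R\mu_s^-)\geqslant\kappa(R\mu^+,R\mu^-)$, which is the \emph{wrong} direction for a term entering with a minus sign; one would need upper semicontinuity (in fact continuity) of the interaction between the opposite charges, and mere boundedness of $\kappa$ on $A^+\times A^-$ does not provide it (nor does vague convergence preserve the masses on noncompact plates). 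This is precisely why the paper abandons the direct method (which it notes works only for finite compact $\mathbf A$ with $\kappa$ continuous on $A^+\times A^-$) and instead uses the consistency of the kernel and the strong completeness of $\mathcal E(\mathbf A,\leqslant\!\mathbf a,\mathbf g)$ (Theorem~\ref{th:strong}): a minimizing net is strongly Cauchy by convexity and the parallelogram law, hence converges strongly and $\mathbf A$-vaguely to an extremal measure (Lemma~\ref{lemma:WM}), whose energy equals the infimum by strong convergence — no vague semicontinuity of the energy is ever invoked; likewise the $\mathbf A$-vague compactness of $\mathfrak S$ is obtained from $\mathfrak S=\mathfrak E=\mathcal M$ and an iterated-limits argument, not from vague closedness of $\mathfrak S$.

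Second, the claim that for $g_{i,\sup}<\infty$ the equality constraint $\int g_i\,d\mu^i=a_i$ is vaguely closed by "routine lower/upper semicontinuity" is false on a noncompact $A_i$: already for $g_i\equiv1$ mass can escape to infinity, so only the inequality $\int g_i\,d\mu^i\leqslant a_i$ survives vague limits. Preventing this loss is where the hypothesis "$A_i$ compact or $C(A_i)<\infty$" actually works (not merely through finiteness of the capacity of $\mathbf A$, as your parenthetical suggests). In the paper's central Lemma~\ref{lemma:exist} the tail is controlled by the interior equilibrium measures $\theta_{K_i^c}$ of the sets $A_i\setminus K_i$ — these exist because $C(A_i)<\infty$ — via $\int g_i\chi_{K_i^c}\,d\mu_s^i\leqslant\|\omega_i\|^{1/r_i}\,\|\theta_{K_i^c}\|^{1/q_i}\,\|\mu_s^i\|$, together with $\|\theta_{K_i^c}\|\to0$, which follows from property (C$_1$); the bounded-$g_i$ case is reduced to the growth case by taking $\omega_i=M_1^{r_i}\theta_{A_i}$, so the finite-capacity hypothesis is needed there too. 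Your H\"older/Cauchy–Schwarz estimate bounds the whole integral $\int g_i\,d\mu^i$ rather than its part outside compacts, and weak convergence of $\kappa(\mu_s^i,\omega_i)$ does not by itself exclude $\int g_i\,d\lambda^i<a_i$; without a quantity such as $\|\theta_{K_i^c}\|$ tending to zero, uniformly in $s$, to kill the tail, your argument for preservation of the normalization does not close.
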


\begin{corollary}\label{cor:exist} If $\mathbf{A}=\mathbf{K}$ is compact,
then for any $\mathbf{a}$ and~$\mathbf{g}$ the class $\mathfrak
S(\mathbf{A},\mathbf{a},\mathbf{g})$ is nonempty and
$\mathbf{A}$-vaguely compact.
\end{corollary}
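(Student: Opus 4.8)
The plan is to obtain Corollary~\ref{cor:exist} as an immediate specialization of Theorem~\ref{exist}: one only has to check that, when $\mathbf{A}=\mathbf{K}$ is compact, the hypotheses of that theorem are fulfilled automatically, whatever the data $\mathbf{a}$ and $\mathbf{g}$ are. Recall that throughout Sec.~\ref{sec:5} the standing assumptions of Sec.~\ref{sec:standing} remain in force, so $\kappa$ is consistent and, in case $I^-\ne\varnothing$, the bounds~(\ref{s}) and~(\ref{bou}) hold; these are the only structural requirements under which Theorem~\ref{exist} is stated, so nothing extra has to be arranged on that side.

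First, since $\mathbf{K}=(K_i)_{i\in I}$ is compact, every plate $A_i=K_i$ is a compact subset of~$\mathrm X$. Each $g_i$ is continuous on~$\mathrm X$, hence bounded on the compact set $K_i$, so that $g_{i,\sup}<\infty$ for every $i\in I$; this is exactly the first of the two alternatives imposed on $g_i$ in Theorem~\ref{exist}, and therefore the growth condition~(\ref{growth}) need not be invoked at all. Likewise, since $A_i=K_i$ is compact, the additional requirement in Theorem~\ref{exist} that each $A_i$ be either compact or of finite interior capacity $C(A_i)$ is met as well.

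Consequently all hypotheses of Theorem~\ref{exist} hold for the condenser $\mathbf{K}$ and for an arbitrary vector $\mathbf{a}$, and the conclusion of that theorem --- namely, that $\mathfrak S(\mathbf{A},\mathbf{a},\mathbf{g})$ is nonempty and $\mathbf{A}$-vaguely compact --- gives the corollary at once. There is essentially no obstacle here: the statement is a direct corollary of Theorem~\ref{exist}, and the only point meriting a moment's care is the observation that compactness of each plate simultaneously discharges both the boundedness alternative for $g_i$ and the finiteness-of-capacity alternative appearing in Theorem~\ref{exist}.
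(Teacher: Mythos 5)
Your proposal is correct and coincides with the paper's own argument: the corollary is deduced directly from Theorem~\ref{exist}, noting that on a compact plate $K_i$ the continuous function $g_i$ is bounded (so $g_{i,\sup}<\infty$) and the compactness of each $K_i$ also satisfies the compact-or-finite-capacity alternative. Nothing further is needed.
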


\begin{proof}This is an immediate consequence of Theorem~\ref{exist},
since $g_i$ is bounded on~$K_i$.\end{proof}

\subsection{On continuity of minimizers}When approaching $\mathbf{A}$ by the increasing
family~$\{\mathbf{K}\}_{\mathbf{A}}$ of the compact condensers
$\mathbf{K}\prec\mathbf{A}$, we shall always suppose all
those~$\mathbf{K}$ to be of capacity nonzero. This involves no loss
of generality, which is clear from~(\ref{nonzero1}) and
Lemma~\ref{lemma.cont}. Choose an arbitrary
$\lambda_{\mathbf{K}}\in\mathfrak
S(\mathbf{A},\mathbf{a},\mathbf{g})$~--- its existence has been
ensured by Corollary~\ref{cor:exist}.

\begin{theorem}\label{cor:cont}Let all
the conditions of Theorem~{\rm\ref{exist}} be satisfied. Then every
$\mathbf{A}$-vague cluster point of
$(\lambda_{\mathbf{K}})_{\mathbf{K}\in\{\mathbf{K}\}_{\mathbf{A}}}$
(such a cluster point exists) belongs to $\mathfrak
S(\mathbf{A},\mathbf{a},\mathbf{g})$. Furthermore, if
$\lambda_{\mathbf{A}}\in\mathfrak
S(\mathbf{A},\mathbf{a},\mathbf{g})$ is arbitrarily given, then
\[\lim_{\mathbf{K}\uparrow\mathbf{A}}\,\|\lambda_{\mathbf{K}}-\lambda_{\mathbf{A}}\|^2=0.\]\end{theorem}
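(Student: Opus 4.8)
The plan is to exploit the completeness of the relevant topological subspace of $\mathcal E(\mathbf A)$ (promised in Sec.~\ref{sec:strong}) together with the continuity property~(\ref{cont}) from Lemma~\ref{lemma.cont}. First I would record that, by Corollary~\ref{cor:exist}, each $\lambda_{\mathbf K}$ exists; since $\|\lambda_{\mathbf K}\|^2=\|\mathcal E(\mathbf K,\mathbf a,\mathbf g)\|^2$ and the latter is nondecreasing in $\mathbf K$ and bounded above by $\|\mathcal E(\mathbf A,\mathbf a,\mathbf g)\|^2<\infty$ (finiteness here comes from~(\ref{nonzero1}) and the hypotheses of Theorem~\ref{exist} which force $\mathfrak S(\mathbf A,\mathbf a,\mathbf g)\neq\varnothing$), the family $(\lambda_{\mathbf K})$ is strongly bounded. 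Next I would check that $(\lambda_{\mathbf K})_{\mathbf K\in\{\mathbf K\}_{\mathbf A}}$ is $\mathbf A$-vaguely bounded: for fixed $i$ and fixed $\varphi\in\mathrm C_0(\mathrm X)$, the mass $\lambda_{\mathbf K}^i(\mathrm X)$ is controlled by $a_i/g_{i,\inf}$ when $g_{i,\inf}>0$, and more generally by the argument already used in the proof of Theorem~\ref{exist}; hence by Lemma~\ref{lem:vaguecomp} the net has an $\mathbf A$-vague cluster point $\lambda^*$, which proves the parenthetical existence claim.

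The heart of the argument is to show $\lambda^*\in\mathfrak S(\mathbf A,\mathbf a,\mathbf g)$. I would argue that $(\lambda_{\mathbf K})$ is in fact a \emph{strong Cauchy} net along the directed family $\{\mathbf K\}_{\mathbf A}$: for $\mathbf K\prec\mathbf K'$ both in $\{\mathbf K\}_{\mathbf A}$, the convexity of $\mathcal E(\mathbf K',\mathbf a,\mathbf g)$ and the parallelogram identity applied to $R\lambda_{\mathbf K}$ and $R\lambda_{\mathbf K'}$ give $\|\lambda_{\mathbf K}-\lambda_{\mathbf K'}\|^2\leqslant 2\|\lambda_{\mathbf K}\|^2+2\|\lambda_{\mathbf K'}\|^2-4\|\mathcal E(\mathbf K',\mathbf a,\mathbf g)\|^2$, and since all three energies converge to the common limit $\|\mathcal E(\mathbf A,\mathbf a,\mathbf g)\|^2$ by~(\ref{cont}), the right-hand side tends to $0$. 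By consistency of $\kappa$ (property~(C$_1$)) and Lemma~\ref{lem:vague}, the strong Cauchy net converges strongly to each of its vague cluster points; in particular $\lambda_{\mathbf K}\to\lambda^*$ strongly, so $\|\lambda^*\|^2=\lim\|\lambda_{\mathbf K}\|^2=\|\mathcal E(\mathbf A,\mathbf a,\mathbf g)\|^2$. It remains to verify $\lambda^*\in\mathcal E(\mathbf A,\mathbf a,\mathbf g)$, i.e. that $\int g_i\,d(\lambda^*)^i=a_i$ for every $i$: the inequality $\int g_i\,d(\lambda^*)^i\leqslant a_i$ follows from vague lower semicontinuity (Lemma~\ref{lemma:lower}) applied to each coordinate together with the fact that $\int g_i\,d\lambda_{\mathbf K}^i=a_i$, while the reverse inequality — the delicate point, since mass could in principle escape to infinity — I would obtain from the strong convergence and the completeness machinery of Sec.~\ref{sec:strong}: the strong limit of the $R\lambda_{\mathbf K}$ has the prescribed coordinate masses, essentially because the normalization constraints define a strongly closed affine condition on the completion. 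This is the step I expect to require the most care, and it is exactly where the standing assumptions~(\ref{s}), (\ref{bou}) and the completeness theorem are used.

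Finally, for the last assertion, let $\lambda_{\mathbf A}\in\mathfrak S(\mathbf A,\mathbf a,\mathbf g)$ be arbitrary. Since $\mathbf K\prec\mathbf A$ gives $\|\mathcal E(\mathbf K,\mathbf a,\mathbf g)\|^2\leqslant\|\mathcal E(\mathbf A,\mathbf a,\mathbf g)\|^2=\|\lambda_{\mathbf A}\|^2$, and since for $\mathbf K$ with nonzero capacity one can truncate $\lambda_{\mathbf A}$ to $\mathbf K$ and renormalize (exactly as in the construction of $\hat\mu_{\mathbf K}$ in the proof of Lemma~\ref{lemma.cont}) to produce a competitor $\hat\lambda_{\mathbf K}\in\mathcal E(\mathbf K,\mathbf a,\mathbf g)$ with $\|\hat\lambda_{\mathbf K}\|^2\to\|\lambda_{\mathbf A}\|^2$, the parallelogram identity applied in $\mathcal E(\mathbf A,\mathbf a,\mathbf g)$ to $R\lambda_{\mathbf K}$ and $R\lambda_{\mathbf A}$ (using that $\tfrac12(\lambda_{\mathbf K}+\lambda_{\mathbf A})$ lies in the convex set $\mathcal E(\mathbf A,\mathbf a,\mathbf g)$, whence its energy is $\geqslant\|\lambda_{\mathbf A}\|^2$) yields
\[
\|\lambda_{\mathbf K}-\lambda_{\mathbf A}\|^2\leqslant 2\|\lambda_{\mathbf K}\|^2-2\|\lambda_{\mathbf A}\|^2
=2\|\mathcal E(\mathbf K,\mathbf a,\mathbf g)\|^2-2\|\mathcal E(\mathbf A,\mathbf a,\mathbf g)\|^2\xrightarrow[\mathbf K\uparrow\mathbf A]{}0
\]
by~(\ref{cont}). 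This gives the stated limit and completes the proof.
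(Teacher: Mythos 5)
Your overall strategy is sound, and the last part of your argument (the parallelogram estimate $\|\lambda_{\mathbf K}-\lambda_{\mathbf A}\|^2\leqslant 2\|\lambda_{\mathbf K}\|^2-2\|\lambda_{\mathbf A}\|^2$ together with Lemma~\ref{lemma.cont}) is correct and is essentially the paper's own mechanism, there packaged as relation~(\ref{fund}) and Lemma~\ref{lemma:WM}\,(ii). Your Cauchy argument for $(\lambda_{\mathbf K})$ is also fine, though two points need repair: the energies $\|\mathcal E(\mathbf K,\mathbf a,\mathbf g)\|^2$ are non\emph{increasing} in $\mathbf K$ and bounded \emph{below} by $\|\mathcal E(\mathbf A,\mathbf a,\mathbf g)\|^2$ (you state the opposite), and the passage from ``strong Cauchy'' to ``converges strongly to every $\mathbf A$-vague cluster point'' cannot be justified by property~(C$_1$) alone, since $R\lambda_{\mathbf K}$ are \emph{signed} measures and (C$_1$) concerns nets in $\mathcal E^+$; the correct citation is Theorem~\ref{th:strong}, which applies because $(\lambda_{\mathbf K})\subset\mathcal E(\mathbf A,\leqslant\!\mathbf a,\mathbf g)$. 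These are fixable.

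The genuine gap is exactly the step you flag as delicate: showing that an $\mathbf A$-vague cluster point $\lambda^*$ satisfies $\int g_i\,d(\lambda^*)^i=a_i$ (not merely $\leqslant a_i$). Your proposed justification --- that the normalization is ``a strongly closed affine condition'' so the strong limit inherits the prescribed masses --- is not valid: the strong topology is the energy seminorm topology, and strong convergence does not control the integrals $\int g_i\,d\mu^i$; when $C(A_i)=\infty$ mass can escape to infinity with negligible energy, which is precisely why minimizers can fail to exist without the hypotheses of Theorem~\ref{exist}. The completeness theorem (Theorem~\ref{th:strong}) only yields $\lambda^*\in\mathcal E(\mathbf A,\leqslant\!\mathbf a,\mathbf g)$, i.e.\ the inequality you already obtained from Lemma~\ref{lemma:lower}. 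The missing ingredient is the Central Lemma~\ref{lemma:exist}, whose proof uses the conditions $g_{i,\sup}<\infty$ or~(\ref{growth}) and $C(A_i)<\infty$ (or compactness), interior equilibrium measures $\theta_{K_i^c}$, property~(C$_1$), and the H\"older--Cauchy--Schwarz estimate; it asserts precisely that every \emph{extremal} measure has the exact masses~$a_i$. The paper's proof avoids your difficulty altogether: it observes via Lemma~\ref{lemma.cont} that $(\lambda_{\mathbf K})$ is a minimizing net in the sense of Sec.~\ref{sec:extremal} (relation~(\ref{kmin})), so every $\mathbf A$-vague cluster point lies in $\mathcal M(\mathbf A,\mathbf a,\mathbf g)$, and then invokes the chain $\mathfrak S=\mathfrak E=\mathcal M$ (relation~(\ref{SWM})) established in the proof of Theorem~\ref{exist}, with Lemma~\ref{lemma:WM}\,(ii) giving the strong convergence to any prescribed $\lambda_{\mathbf A}$. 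To close your argument you should replace the ``strongly closed affine condition'' claim by an appeal to Lemma~\ref{lemma:exist} (noting $\lambda^*\in\mathcal M=\mathfrak E$), or simply quote~(\ref{SWM}).
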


Thus, under the assumptions of Theorem~\ref{cor:cont}, if moreover
$\kappa$ is strictly positive definite and all~$A_i$, $i\in I$, are
mutually disjoint, then the (unique)
minimizer~$\lambda_{\mathbf{K}}$ approaches the (unique)
minimizer~$\lambda_{\mathbf{A}}$ both $\mathbf A$-vaguely and
strongly as $\mathbf{K}\uparrow\mathbf{A}$.

The proofs of Theorems~\ref{exist} and~\ref{cor:cont}, to be given
in Sec.~\ref{sec:proof.th.str} (see also Sec.~\ref{sec:extremal} for
certain crucial auxiliary notions and results), are based on a
theorem on the strong completeness of proper subspaces of~$\mathcal
E(\mathbf{A})$, which is a subject of the next section.

\section{Strong completeness of measures associated with condensers}\label{sec:11}

As always, assume all the standing assumptions, stated
in~Sec.~\ref{sec:standing}, to hold. Having denoted
\begin{equation*}
\mathfrak
M(\mathbf{A},\leqslant\!\mathbf{a},\mathbf{g}):=\Bigl\{\mu\in\mathfrak
M(\mathbf{A}):\quad\int g_i\,d\mu^i\leqslant a_i\mbox{ \ for all \ }
i\in I\Bigr\},
\end{equation*}
we treat $\mathcal
E(\mathbf{A},\leqslant\!\mathbf{a},\mathbf{g}):=\mathfrak
M(\mathbf{A},\leqslant\!\mathbf{a},\mathbf{g})\cap\mathcal
E(\mathbf{A})$ as a topological subspace of the semimetric space
$\mathcal E(\mathbf{A})$; the induced topology is likewise called
the {\it strong\/} topology.

Our purpose is to show that $\mathcal
E(\mathbf{A},\leqslant\!\mathbf{a},\mathbf{g})$ is strongly
complete.

\subsection{Auxiliary assertions}

\begin{lemma}\label{lem:aux1} $\mathfrak M(\mathbf{A},\leqslant\!\mathbf{a},\mathbf{g})$ is
$\mathbf{A}$-vaguely bounded and, hence, $\mathbf{A}$-vaguely
compact.\end{lemma}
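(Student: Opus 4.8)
The plan is to show that $\mathfrak{M}(\mathbf{A},\leqslant\!\mathbf{a},\mathbf{g})$ is $\mathbf{A}$-vaguely bounded; the $\mathbf{A}$-vague relative compactness then follows from Lemma~\ref{lem:vaguecomp}, and since $\mathbf{A}$-vague boundedness of each $i$-projection together with vague closedness gives compactness (the set $\mathfrak{M}^+(A_i,\leqslant\!a_i,g_i)$ is vaguely closed), one gets $\mathbf{A}$-vague compactness outright. So the whole matter reduces to a uniform bound: for every $\varphi\in\mathrm{C}_0(\mathrm X)$ and every $i\in I$,
\[
\sup_{\mu\in\mathfrak M(\mathbf{A},\leqslant\!\mathbf{a},\mathbf{g})}\,|\mu^i(\varphi)|<\infty.
\]

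First I would fix $i\in I$ and $\varphi\in\mathrm{C}_0(\mathrm X)$, and note it suffices to bound $\mu^i(\psi)$ for an arbitrary $\psi\in\mathrm{C}_0(\mathrm X)$ with $0\leqslant\psi\leqslant 1$ and $\psi\geqslant|\varphi|/\|\varphi\|$ (so $\psi\equiv 1$ on a compact neighborhood of $S(\varphi)$), since $\mu^i$ is a nonnegative measure and $|\mu^i(\varphi)|\leqslant\|\varphi\|\,\mu^i(\psi)$. The key estimate comes from the normalization constraint $\int g_i\,d\mu^i\leqslant a_i$ together with the fact that $g_i$ is continuous and strictly positive: on the compact set $S(\psi)$ we have $g_i\geqslant c>0$ where $c:=\min_{S(\psi)}g_i$, hence
\[
\mu^i(\psi)\leqslant\frac1c\int_{S(\psi)}g_i\,d\mu^i\leqslant\frac1c\int g_i\,d\mu^i\leqslant\frac{a_i}{c}.
\]
This bound is independent of $\mu$, which is exactly what is needed; and it uses only the constraint $\int g_i\,d\mu^i\leqslant a_i$, not finiteness of energy, so it holds on all of $\mathfrak M(\mathbf{A},\leqslant\!\mathbf{a},\mathbf{g})$.

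Having established $\mathbf{A}$-vague boundedness, I would invoke Lemma~\ref{lem:vaguecomp} to conclude that $\mathfrak M(\mathbf{A},\leqslant\!\mathbf{a},\mathbf{g})$ is $\mathbf{A}$-vaguely relatively compact. To upgrade this to genuine compactness, observe that the $i$-projection of the set is contained in $\{\nu\in\mathfrak M^+(A_i):\int g_i\,d\nu\leqslant a_i\}$, which is vaguely closed: if $\nu_s\to\nu$ vaguely with $\nu_s\in\mathfrak M^+(A_i)$, then $\nu\in\mathfrak M^+(A_i)$ (as $A_i$ is closed and $\mathfrak M^+(A_i)$ is vaguely closed in $\mathfrak M^+$), and $\int g_i\,d\nu\leqslant\liminf_s\int g_i\,d\nu_s\leqslant a_i$ by the vague lower semicontinuity of $\nu\mapsto\int g_i\,d\nu$ (Lemma~\ref{lemma:lower}, applicable since $g_i\in\mathrm\Phi(\mathrm X)$). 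Thus each $i$-projection is vaguely bounded and vaguely closed, hence vaguely compact by \cite[Chap.~III, \S~2, Prop.~9]{B2}, and the product over $i\in I$ of these compact sets is $\mathbf{A}$-vaguely compact by Tychonoff's theorem; since $\mathfrak M(\mathbf{A},\leqslant\!\mathbf{a},\mathbf{g})$ is precisely this product (identifying $\mu$ with $(\mu^i)_{i\in I}$), it is $\mathbf{A}$-vaguely compact.

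I do not anticipate a serious obstacle here; the statement is essentially a bookkeeping exercise combining the normalization inequalities with the standard vague compactness criterion. The one point that warrants care is the identification of the set with an honest product of compact sets rather than merely a subset of a compact product — one must check that $\mathfrak M(\mathbf{A},\leqslant\!\mathbf{a},\mathbf{g})$ corresponds under the homeomorphism $\mathfrak M(\mathbf{A})\cong\prod_{i\in I}\mathfrak M^+(A_i)$ to exactly $\prod_{i\in I}\{\nu\in\mathfrak M^+(A_i):\int g_i\,d\nu\leqslant a_i\}$, which is immediate from the definition of the constraint as a coordinatewise condition. Everything else is routine.
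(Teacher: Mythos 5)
Your proof is correct and follows essentially the same route as the paper: the uniform bound on each coordinate comes from the positivity and continuity of $g_i$ on compact sets together with the constraint $\int g_i\,d\mu^i\leqslant a_i$, and compactness then follows from vague compactness of bounded closed sets plus Tychonoff, with closedness supplied by the vague lower semicontinuity of $\nu\mapsto\int g_i\,d\nu$ (Lemma~\ref{lemma:lower}). The only difference is presentational: the paper cites Lemma~\ref{lem:vaguecomp} for relative compactness and then notes closedness, whereas you spell out the identification with a product of vaguely compact coordinate sets, which amounts to the same argument.
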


\begin{proof} Fix $i\in I$, and let a compact set $K\subset A_i$
be given. Since $g_i$ is positive and continuous, the relation
$$
a_i\geqslant\int g_i\,d\mu^i\geqslant\mu^i(K)\,\min_{x\in K}\
g_i(x),\quad\mbox{where \ }\mu\in\mathfrak
M(\mathbf{A},\leqslant\!\mathbf{a},\mathbf{g}),
$$
yields \[\sup_{\mu\in\mathfrak
M(\mathbf{A},\leqslant\mathbf{a},\mathbf{g})}\,\mu^i(K)<\infty.\]
This implies that $\mathfrak
M(\mathbf{A},\leqslant\!\mathbf{a},\mathbf{g})$ is
$\mathbf{A}$-vaguely bounded, and hence it is $\mathbf{A}$-va\-guely
relatively compact by Lemma~\ref{lem:vaguecomp}. Since it is
$\mathbf{A}$-vaguely closed in consequence
of~Lemma~\ref{lemma:lower}, the desired assertion
follows.\end{proof}

\begin{lemma}\label{lem:aux2} If a net
$(\mu_s)_{s\in S}\subset\mathcal
E(\mathbf{A},\leqslant\!\mathbf{a},\mathbf{g})$ is strongly bounded,
then its $\mathbf A$-vague adherence is nonempty and contained in
$\mathcal
E(\mathbf{A},\leqslant\!\mathbf{a},\mathbf{g})$.\end{lemma}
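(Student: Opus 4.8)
The plan is to prove Lemma~\ref{lem:aux2} by combining the $\mathbf A$-vague compactness already secured in Lemma~\ref{lem:aux1} with the lower semicontinuity of the two quantities that cut out $\mathcal E(\mathbf A,\leqslant\!\mathbf a,\mathbf g)$, namely the linear functionals $\mu\mapsto\int g_i\,d\mu^i$ and the energy $\mu\mapsto\|\mu\|^2$. First I would note that $(\mu_s)_{s\in S}\subset\mathfrak M(\mathbf A,\leqslant\!\mathbf a,\mathbf g)$, so by Lemma~\ref{lem:aux1} the net lies in an $\mathbf A$-vaguely compact set; hence its $\mathbf A$-vague adherence is nonempty and, again by Lemma~\ref{lem:aux1} (closedness), contained in $\mathfrak M(\mathbf A,\leqslant\!\mathbf a,\mathbf g)$. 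Fix an $\mathbf A$-vague cluster point $\mu_0$ and pass to a subnet $(\mu_t)_{t\in T}$ converging $\mathbf A$-vaguely to $\mu_0$; by definition this means $\mu_t^i\to\mu_0^i$ vaguely for each $i\in I$. It remains only to check $\mu_0\in\mathcal E(\mathbf A)$, i.e.\ that $\mu_0$ has finite energy.

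For the energy bound, the idea is to show $\|\mu_0\|^2\leqslant\liminf_t\|\mu_t\|^2<\infty$, using strong boundedness of the net on the right and the $\mathbf A$-vague lower semicontinuity of $\mu\mapsto\|\mu\|^2$ on the left. Concretely I would argue coordinatewise with a truncation: for any finite $J\subset I$, the partial sum $\sum_{i\in J}\|\mu_t^i\|^2$ of nonnegative energies is dominated by $\|\mu_t\|^2$ when $\kappa\geqslant0$ (and the remaining case $\kappa\not\geqslant0$ forces $\mathrm X$ compact and $\mathbf A$ finite, which is handled directly). Applying Lemma~\ref{lemma:lower} to the positive definite kernel on $\mathrm X\times\mathrm X$ gives vague lower semicontinuity of $\nu\mapsto\|\nu\|^2=\kappa(\nu,\nu)$ on $\mathfrak M^+$ (using $\mu_t^i\otimes\mu_t^i\to\mu_0^i\otimes\mu_0^i$ vaguely on the product space), so $\|\mu_0^i\|^2\leqslant\liminf_t\|\mu_t^i\|^2$ for each $i$; summing over $i\in J$ and then letting $J\uparrow I$ yields $\sum_{i\in I}\|\mu_0^i\|^2\leqslant\sup_t\|\mu_t\|^2<\infty$. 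By Corollary~\ref{finite}, $\mu_0\in\mathcal E(\mathbf A)$, and together with $\mu_0\in\mathfrak M(\mathbf A,\leqslant\!\mathbf a,\mathbf g)$ this gives $\mu_0\in\mathcal E(\mathbf A,\leqslant\!\mathbf a,\mathbf g)$.

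A subtlety I expect to need care with, in the presence of negative plates, is that vague lower semicontinuity of the energy is not automatic for an arbitrary positive definite kernel when $\mathrm X$ is noncompact and $\kappa$ is merely nonnegative on $\mathrm X\times\mathrm X$ but may fail to be so as a kernel in the pre-Hilbert sense needed for Lemma~\ref{lemma:lower}; here, however, the standing assumptions handle this. Under~(\ref{bou}), the cross terms $\kappa(\mu_t^i,\mu_t^j)$ with $\alpha_i\ne\alpha_j$ satisfy $|\kappa(\mu_t^i,\mu_t^j)|\leqslant(\sup_{A^+\times A^-}\kappa)\,\mu_t^i(\mathrm X)\mu_t^j(\mathrm X)$, and~(\ref{s}) together with $0<a_ig_{i,\inf}^{-1}$ bounds $\sum_i\mu_t^i(\mathrm X)\leqslant\sum_i a_ig_{i,\inf}^{-1}<\infty$ uniformly in $t$; so the total mass of $R\mu_t^+$ and $R\mu_t^-$ is uniformly bounded, the negative part of the energy series is uniformly controlled, and the lower-semicontinuity argument for $\|R\mu_0^+\|^2$ and $\|R\mu_0^-\|^2$ can be carried out separately against the genuinely nonnegative quantities while the cross term is handled by dominated convergence along the subnet. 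The main obstacle is therefore bookkeeping: making the interchange of $\liminf_t$ with the infinite sum over $i$ rigorous, which is exactly what the uniform mass bound from~(\ref{s})--(\ref{bou}) (or the compactness of $\mathbf A$ when $\kappa\not\geqslant0$) is there to supply.
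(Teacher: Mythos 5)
Your proposal assembles exactly the ingredients of the paper's proof: Lemma~\ref{lem:aux1} gives a nonempty $\mathbf A$-vague adherence contained in $\mathfrak M(\mathbf{A},\leqslant\!\mathbf{a},\mathbf{g})$; the mass bound $\mu_s^i(\mathrm X)\leqslant a_ig_{i,\inf}^{-1}$ together with~(\ref{s}) and~(\ref{bou}) controls the oppositely signed cross energy uniformly in~$s$; and Lemma~\ref{lemma:lower}, applied on $\mathrm X\times\mathrm X$ with $\psi=\kappa$, supplies the vague lower semicontinuity needed at the cluster point. The difference is in how you aggregate. The paper deduces from strong boundedness and (\ref{s})--(\ref{bou}) that $\sup_{s\in S}\|R\mu_s^{\pm}\|^2<\infty$ (since $\|R\mu_s^+\|^2+\|R\mu_s^-\|^2=\|R\mu_s\|^2+2\kappa(R\mu_s^+,R\mu_s^-)$ and the cross term is bounded above by the uniform mass bound), then applies the lower semicontinuity directly to $R\mu^+$ and $R\mu^-$, which converge vaguely along the subnet by Lemma~\ref{lem:vague}; this yields $R\mu^{\pm}\in\mathcal E^+$ and hence $\kappa(\mu,\mu)<\infty$ at once, with no case distinction on the sign of~$\kappa$ and no coordinatewise bookkeeping.

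Your primary (coordinatewise) closing is the weaker part. First, the domination $\sum_{i\in J}\|\mu_t^i\|^2\leqslant\|\mu_t\|^2$ for $\kappa\geqslant0$ is false once $I^-\ne\varnothing$, because the oppositely signed terms enter~(\ref{ener}) with a minus sign; your last paragraph repairs this, but the repaired estimate is precisely the paper's bound on $\|R\mu_t^+\|^2+\|R\mu_t^-\|^2$. Second, and more substantively, finiteness of $\sum_{i\in I}\|\mu_0^i\|^2$ does not by itself control the same-signed off-diagonal energies $\kappa(\mu_0^i,\mu_0^j)$, $i\ne j$ (Cauchy--Schwarz gives only $\|\mu_0^i\|\,\|\mu_0^j\|$, whose sum over $i\ne j$ need not converge), so concluding $\mu_0\in\mathcal E(\mathbf A)$ rests entirely on the sufficiency half of Corollary~\ref{finite}; the inequality invoked there closes the argument only for finitely many plates, and the paper's proof of the present lemma deliberately does not rely on it. The aggregated variant you sketch at the end --- lower semicontinuity applied separately to $\|R\mu_0^+\|^2$ and $\|R\mu_0^-\|^2$, with the cross term merely bounded via~(\ref{bou}) (no dominated convergence is needed: once $R\mu_0^{\pm}\in\mathcal E^+$, the mutual energy is finite by Cauchy--Schwarz) --- is exactly the paper's argument and is the version you should make primary.
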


\begin{proof} According to Lemma~\ref{lem:aux1}, the $\mathbf{A}$-vague adherence of
$(\mu_s)_{s\in S}$ is nonempty and contained in~$\mathfrak
M(\mathbf{A},\leqslant\!\mathbf{a},\mathbf{g})$. To establish the
lemma, it is enough to prove that every its element~$\mu$ is of
finite energy.

To this end, observe that $(R\mu_s)_{s\in S}$ is strongly bounded
by~(\ref{ener}). We proceed by showing that so are the nets
$(R\mu_s^+)_{s\in S}$ and $(R\mu_s^-)_{s\in S}$, i.\,e.,
\begin{equation}
\sup_{s\in S}\,\|R\mu_s^\pm\|^2<\infty.  \label{7.1}
\end{equation}
Of course, this needs to be proved only when $I^-\ne\varnothing$;
then, according to the standing assumptions, (\ref{s})
and~(\ref{bou}) both hold. Since $\int g_i\,d\mu_s^i\leqslant a_i$,
we get
\begin{equation}
\sup_{s\in S}\,\mu_s^i(\mathbf X)\leqslant
a_ig_{i,\inf}^{-1}\quad\mbox{for all \ }i\in I. \label{7.3}
\end{equation}
Consequently, by (\ref{s}), \[\sup_{s\in S}\,R\mu_s^\pm(\mathbf
X)\leqslant \sum_{i\in I}\,a_ig_{i,\inf}^{-1}<\infty.\] Because
of~(\ref{bou}), this implies that $\kappa(R\mu^+_s,R\mu^-_s)$
remains bounded from above on~$S$; hence, so do $\|R\mu^+_s\|^2$ and
$\|R\mu^-_s\|^2$.

Now, if $(\mu_d)_{d\in D}$ is a subnet of $(\mu_s)_{s\in S}$ that
converges $\mathbf{A}$-vaguely to~$\mu$, then, by
Lemma~\ref{lem:vague}, $(R\mu^+_d)_{d\in D}$ and $(R\mu^-_d)_{d\in
D}$ converge vaguely to~$R\mu^+$ and~$R\mu^-$, respectively.
Therefore application of Lemma~\ref{lemma:lower} with $\mathrm
Y=\mathrm X\times\mathrm X$ and $\psi=\kappa$ enables us to conclude
from~(\ref{7.1}) that $R\mu^+$ and $R\mu^-$ are of finite energy.
This yields $\kappa(\mu,\mu)<\infty$ as required.\end{proof}

\begin{corollary}\label{cor:aux1} If $(\mu_s)_{s\in
S}\subset\mathcal E(\mathbf{A},\leqslant\!\mathbf{a},\mathbf{g})$ is
strongly bounded, then
\begin{equation} \sup_{s\in S}\,\|\mu_s^i\|^2<\infty,\quad i\in I.\label{7.1i}
\end{equation}
\end{corollary}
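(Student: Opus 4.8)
The plan is to deduce the coordinatewise strong bound (\ref{7.1i}) from the already-established facts in Lemma~\ref{lem:aux2}, namely the strong boundedness of the two ``sign-aggregated'' nets $(R\mu_s^+)_{s\in S}$ and $(R\mu_s^-)_{s\in S}$ recorded in (\ref{7.1}). The key observation is that for each fixed $i\in I$ the coordinate $\mu_s^i$ is a part (restriction) of either $R\mu_s^+$ (if $i\in I^+$) or $R\mu_s^-$ (if $i\in I^-$): indeed $R\mu_s^+=\sum_{j\in I^+}\mu_s^j$ and $R\mu_s^-=\sum_{j\in I^-}\mu_s^j$, so $\mu_s^i\leqslant R\mu_s^{+}$ or $\mu_s^i\leqslant R\mu_s^{-}$ as nonnegative measures.

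First I would reduce to the case $\kappa\geqslant0$: if not, then $\mathrm X$ is compact, hence $\mathbf A$ is finite, and then (\ref{7.1i}) is immediate from (\ref{ener}) together with the strong boundedness of $(R\mu_s)_{s\in S}$ — in the finite case each $\kappa(\mu_s^i,\mu_s^i)$ is one of finitely many nonnegative terms whose weighted sum $\|R\mu_s\|^2$ is bounded, and the cross terms are controlled by Cauchy--Schwarz. So assume $\kappa\geqslant0$. Then for $i\in I^+$ the inequality $\mu_s^i\leqslant R\mu_s^+$ between nonnegative measures gives, since the kernel is nonnegative,
\[
\|\mu_s^i\|^2=\kappa(\mu_s^i,\mu_s^i)\leqslant\kappa(R\mu_s^+,R\mu_s^+)=\|R\mu_s^+\|^2,
\]
and symmetrically $\|\mu_s^i\|^2\leqslant\|R\mu_s^-\|^2$ for $i\in I^-$. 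Taking the supremum over $s\in S$ and invoking (\ref{7.1}) yields (\ref{7.1i}) with a bound uniform in $i$, which is even slightly stronger than asserted.

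The only point requiring a word of care is the monotonicity step $0\leqslant\nu_1\leqslant\nu_2\ \Rightarrow\ \kappa(\nu_1,\nu_1)\leqslant\kappa(\nu_2,\nu_2)$ when $\kappa\geqslant0$; this is standard (write $\nu_2=\nu_1+(\nu_2-\nu_1)$ with $\nu_2-\nu_1\in\mathfrak M^+$ and expand, all three resulting terms being nonnegative by positive definiteness and nonnegativity of $\kappa$), and it is exactly the kind of fact already used implicitly in the proof of Lemma~\ref{lemma.cont} (cf.\ the inequality $\|\mu^i_{\mathbf K}\|\leqslant\|\mu^i\|$ in (\ref{unif3})). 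I do not expect a genuine obstacle here: essentially all the analytic work has been front-loaded into Lemma~\ref{lem:aux2}, and this corollary is just the passage from the aggregated potentials back to individual coordinates via the trivial majorization $\mu_s^i\leqslant R\mu_s^{\pm}$. If one prefers to avoid the reduction to $\kappa\geqslant0$, an alternative is to note that for any $i$, $\mu_s^i$ and $R\mu_s-\mu_s^i\alpha_i$ (suitably interpreted) both lie in $\mathcal E$ with controlled norms, but the two-case argument above is cleaner.
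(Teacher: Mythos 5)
Your main case ($\kappa\geqslant 0$) is sound and is essentially the paper's own argument: for $i\in I^+$ one has $\|\mu_s^i\|^2\leqslant\sum_{j,k\in I^+}\kappa(\mu_s^j,\mu_s^k)=\|R\mu_s^+\|^2$ because all terms are nonnegative (your monotonicity step $0\leqslant\nu_1\leqslant\nu_2\Rightarrow\|\nu_1\|^2\leqslant\|\nu_2\|^2$ is a correct way to phrase this), and then (\ref{7.1}) gives the uniform bound; similarly for $i\in I^-$.

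The gap is in your reduction step, i.e.\ the case where $\kappa$ takes negative values, so that $\mathrm X$ is compact and $\mathbf A$ is finite. There you assert that (\ref{7.1i}) is ``immediate'' from (\ref{ener}) and the boundedness of $\|R\mu_s\|^2$ because ``the cross terms are controlled by Cauchy--Schwarz.'' This is circular: Cauchy--Schwarz bounds $|\kappa(\mu_s^j,\mu_s^k)|$ by $\|\mu_s^j\|\,\|\mu_s^k\|$, which are exactly the quantities you are trying to bound, and a bounded finite sum does not bound its individual nonnegative diagonal summands unless the remaining terms are bounded \emph{below} by a known constant. The paper closes this case by a different (and non-circular) device: on a compact $\mathrm X$ the lower semicontinuous kernel satisfies $\kappa\geqslant-c$ for some $c>0$; each $A_i$ is then compact, so $g_{i,\inf}>0$, and $\int g_i\,d\mu_s^i\leqslant a_i$ yields the mass bound (\ref{7.3}); hence every same-sign cross term obeys $\kappa(\mu_s^j,\mu_s^k)\geqslant-a_ja_k\,g_{j,\inf}^{-1}g_{k,\inf}^{-1}\,c$ (this is (\ref{bdbl})), and since $I$ is finite this gives $\|\mu_s^i\|^2\leqslant\|R\mu_s^{\pm}\|^2+C$ with $C$ independent of $s$, which together with (\ref{7.1}) proves (\ref{7.1i}). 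Replace your Cauchy--Schwarz remark by this argument (or any equivalent lower bound on the off-diagonal same-sign terms) and the proof is complete.
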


\begin{proof} It is seen from~(\ref{7.1}) that the claimed relation
(\ref{7.1i}) will be proved once we show that
\begin{equation}\sum_{i,j\in
I^\pm}\,\kappa(\mu_s^i,\mu_s^j)\geqslant
C>-\infty,\label{bdbl}\end{equation} where $C$ is independent
of~$s$. Since this is obvious when $\kappa\geqslant0$, one can
assume $\mathrm X$ to be compact. Then $\kappa$, being lower
semicontinuous, is bounded from below on~$\mathrm X$ (say by~$-c$,
where $c>0$), while $\mathbf A$ is finite. Furthermore, then
$g_{i,\inf}>0$ for all $i\in I$ and, hence, (\ref{7.3}) holds. This
implies that
$\kappa(\mu_s^i,\mu_s^j)\geqslant-a_ia_j\,g_{i,\inf}^{-1}\,g_{j,\inf}^{-1}\,c$
for all $i,\,j\in I$, and (\ref{bdbl}) follows.\end{proof}

\subsection{Strong completeness of $\mathcal E(\mathbf{A},\leqslant\!\mathbf{a},\mathbf{g})$}\label{sec:strong}

\begin{theorem}\label{th:strong} The
semimetric space $\mathcal
E(\mathbf{A},\leqslant\!\mathbf{a},\mathbf{g})$ is complete. In more
detail, if $(\mu_s)_{s\in S}$ is a strong Cauchy net in $\mathcal
E(\mathbf{A},\leqslant\!\mathbf{a},\mathbf{g})$ and $\mu$~is one of
its $\mathbf A$-vague cluster point (such a $\mu$ exists), then
$\mu\in\mathcal E(\mathbf{A},\leqslant\!\mathbf{a},\mathbf{g})$ and
\begin{equation}
\lim_{s\in S}\,\|\mu_s-\mu\|^2=0.\label{str}
\end{equation}
Assume, in addition, that the kernel is strictly positive definite
and all $A_i$, $i\in I$, are mutually disjoint. If moreover
$(\mu_s)_{s\in S}\subset\mathcal
E(\mathbf{A},\leqslant\!\mathbf{a},\mathbf{g})$ converges strongly
to $\mu_0\in\mathcal E(\mathbf A)$, then actually $\mu_0\in\mathcal
E(\mathbf{A},\leqslant\!\mathbf{a},\mathbf{g})$ and $\mu_s\to\mu_0$
$\mathbf A$-vaguely.\end{theorem}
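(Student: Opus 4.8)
The plan is to prove the two assertions of Theorem~\ref{th:strong} in sequence, the first resting on Fuglede's consistency property~(C$_1$) transported to the condenser setting via the isometry $\mu\mapsto R\mu$, the second on strict positive definiteness together with the disjointness of the plates.

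\textbf{Completeness of $\mathcal E(\mathbf{A},\leqslant\!\mathbf{a},\mathbf{g})$.} Let $(\mu_s)_{s\in S}$ be a strong Cauchy net in $\mathcal E(\mathbf{A},\leqslant\!\mathbf{a},\mathbf{g})$. Being strong Cauchy, it is strongly bounded, so by Lemma~\ref{lem:aux2} its $\mathbf A$-vague adherence is nonempty and contained in $\mathcal E(\mathbf{A},\leqslant\!\mathbf{a},\mathbf{g})$; fix such a cluster point~$\mu$, and pass to a subnet $(\mu_d)_{d\in D}$ converging $\mathbf A$-vaguely to~$\mu$. By Lemma~\ref{lem:vague} the positive and negative parts $R\mu_d^\pm$ converge vaguely to $R\mu^\pm$, hence $R\mu_d=R\mu_d^+-R\mu_d^-$ converges vaguely to $R\mu$. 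Now $(R\mu_s)_{s\in S}$ is, by the very definition~(\ref{seminorm}) of the semimetric, a strong Cauchy net in $\mathcal E$, and $R\mu$ is a vague cluster point of it; since the kernel is consistent, property~(C$_1$) tells us that $(R\mu_s)_{s\in S}$ converges strongly to~$R\mu$, which is precisely~(\ref{str}). (One should note that~(C$_1$) as stated in the excerpt is formulated for nets in $\mathcal E^+$; here I would instead apply it separately to the nets $(R\mu_s^+)$ and $(R\mu_s^-)$, which are strongly bounded by~(\ref{7.1}) from the proof of Lemma~\ref{lem:aux2}, extract a common subnet along which both converge vaguely to $R\mu^+$ and $R\mu^-$, deduce strong convergence $R\mu_d^\pm\to R\mu^\pm$ by~(C$_1$), and then combine — strong convergence of both parts along the subnet forces strong convergence of the whole Cauchy net $(R\mu_s)$ to $R\mu^+-R\mu^-=R\mu$ by the triangle inequality.) This is the step I expect to require the most care: the consistency property is about nonnegative measures, so the decomposition of $R\mu$ into Jordan parts and the control of those parts afforded by~(\ref{s}) and~(\ref{bou}) are exactly what makes it applicable.

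\textbf{The strictly positive definite case.} Suppose now $\kappa$ is strictly positive definite and the $A_i$ are mutually disjoint, and let $(\mu_s)_{s\in S}\subset\mathcal E(\mathbf{A},\leqslant\!\mathbf{a},\mathbf{g})$ converge strongly to some $\mu_0\in\mathcal E(\mathbf{A})$. A strongly convergent net is strong Cauchy, so by the first part it has an $\mathbf A$-vague cluster point $\mu\in\mathcal E(\mathbf{A},\leqslant\!\mathbf{a},\mathbf{g})$ with $\|\mu_s-\mu\|\to0$; combined with $\|\mu_s-\mu_0\|\to0$ this gives $\|\mu-\mu_0\|=0$, i.e. $\mu$ and $\mu_0$ are equivalent in $\mathcal E(\mathbf{A})$. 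By the remark at the end of Sec.~\ref{sec:ensum} (the paragraph following~(\ref{seminorm})), under strict positive definiteness equivalence in $\mathcal E(\mathbf{A})$ forces $\mu\cong\mu_0$, and since the plates are mutually disjoint this in turn forces $\mu\equiv\mu_0$, that is $\mu_0^i=\mu^i$ for every $i\in I$. Hence $\mu_0=\mu\in\mathcal E(\mathbf{A},\leqslant\!\mathbf{a},\mathbf{g})$, proving the first claim of this part.

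It remains to show $\mu_s\to\mu_0$ $\mathbf A$-vaguely, and here I would argue by a standard adherence/uniqueness device. Fix $i\in I$ and $\varphi\in\mathrm C_0(\mathrm X)$; we must show $\mu_s^i(\varphi)\to\mu_0^i(\varphi)$. The net is strongly bounded, so by Corollary~\ref{cor:aux1} each coordinate net $(\mu_s^i)_{s\in S}$ is $\|\cdot\|$-bounded, hence (together with the mass bound~(\ref{7.3})) $\mathbf A$-vaguely bounded, so by Lemma~\ref{lem:vaguecomp} $(\mu_s)_{s\in S}$ has $\mathbf A$-vague cluster points, and every such cluster point $\mu'$ lies in $\mathcal E(\mathbf{A},\leqslant\!\mathbf{a},\mathbf{g})$ and satisfies $\|\mu_s-\mu'\|\to0$ by the first part, whence $\|\mu'-\mu_0\|=0$ and, exactly as above, $\mu'\equiv\mu_0$. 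Thus $\mu_0$ is the unique $\mathbf A$-vague cluster point of the $\mathbf A$-vaguely relatively compact net $(\mu_s)_{s\in S}$; a net in a Hausdorff space that is relatively compact and has a unique cluster point converges to that point (see, e.\,g.,~\cite[Chap.~2]{K}). Therefore $\mu_s\to\mu_0$ $\mathbf A$-vaguely, completing the proof. The only mild subtlety in this last paragraph is justifying that $(\mu_s)$, rather than merely a subnet, is $\mathbf A$-vaguely relatively compact — but that is immediate from Corollary~\ref{cor:aux1}, the uniform mass bound~(\ref{7.3}) (which holds since each $\mu_s\in\mathfrak M(\mathbf{A},\leqslant\!\mathbf{a},\mathbf{g})$ and $g_{i,\inf}>0$), and Lemma~\ref{lem:vaguecomp}.
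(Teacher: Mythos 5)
The genuine gap is in your first part, at the crucial passage from $\mathbf A$-vague convergence to strong convergence. Your main line applies (C$_1$) to the signed net $(R\mu_s)_{s\in S}$, but (C$_1$) is a statement about nets in $\mathcal E^+$ only; extending such a consistency property to signed measures is precisely what Theorem~\ref{th:strong} is about and cannot be presupposed (recall Cartan's example: $\mathcal E$ itself is strongly incomplete even for the Newtonian kernel). Your fallback does not repair this: you apply (C$_1$) to the Jordan parts $(R\mu_s^\pm)$, but the hypothesis of (C$_1$) is that the net be a strong \emph{Cauchy} net in $\mathcal E^+$, whereas all that (\ref{7.1}) provides is strong \emph{boundedness}; the strong Cauchy property of $R\mu_s=R\mu_s^+-R\mu_s^-$ does not pass to the two parts separately (the cross terms can absorb large fluctuations of the parts), so the claimed strong convergence $R\mu_d^\pm\to R\mu^\pm$ is unjustified. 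Strong boundedness together with vague convergence yields only \emph{weak} convergence --- that is exactly property (C$_2$), and this is what the paper uses: by (C$_2$), $R\mu_s^\pm\to R\mu^\pm$ weakly, hence $R\mu_s\to R\mu$ weakly, and then the strong Cauchy property of the full net is combined with weak convergence via
\[
\|\mu_s-\mu\|^2=\lim_{l\in S}\,\kappa(R\mu_s-R\mu,R\mu_s-R\mu_l)\leqslant\|\mu_s-\mu\|\,\liminf_{l\in S}\,\|\mu_s-\mu_l\|,
\]
which gives (\ref{str}). In short, you try to prove strong convergence of each Jordan part, which is both unavailable from the cited hypotheses and unnecessary; weak convergence of the parts plus the Cauchy--Schwarz trick on the whole net is the correct (and intended) route.

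Your second half (strictly positive definite kernel, mutually disjoint plates) is essentially the paper's argument and is correct: $\|\mu_0-\mu\|=0$ for any $\mathbf A$-vague cluster point $\mu$, hence $\mu_0\cong\mu$ and then $\mu_0\equiv\mu$, and the uniqueness of the cluster point together with $\mathbf A$-vague compactness forces $\mathbf A$-vague convergence, the $\mathbf A$-vague topology being Hausdorff. One small correction there: your route to relative compactness via (\ref{7.3}) needs $g_{i,\inf}>0$, which is not guaranteed when $I^-=\varnothing$; it is cleaner (and what the paper implicitly does) to invoke Lemma~\ref{lem:aux1}, which gives the $\mathbf A$-vague compactness of $\mathfrak M(\mathbf{A},\leqslant\!\mathbf{a},\mathbf{g})$ directly from the constraints $\int g_i\,d\mu^i\leqslant a_i$.
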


\begin{proof} Fix a
strong Cauchy net  $(\mu_s)_{s\in S}\subset\mathcal
E(\mathbf{A},\leqslant\!\mathbf{a},\mathbf{g})$. Since such a net
converges strongly to every its strong cluster point, $(\mu_s)_{s\in
S}$ can certainly be assumed to be strongly bounded. Then, by
Lemma~\ref{lem:aux2}, there exists an $\mathbf A$-vague cluster
point~$\mu$ of~$(\mu_s)_{s\in S}$, and
\begin{equation}
\mu\in\mathcal E(\mathbf{A},\leqslant\!\mathbf{a},\mathbf{g}).
\label{leqslant1}
\end{equation}
We next proceed by verifying (\ref{str}). Of course, there is no
loss of generality in assuming $(\mu_s)_{s\in S}$ to converge
$\mathbf A$-vaguely to~$\mu$. Then, by Lemma~\ref{lem:vague},
$(R\mu^+_s)_{s\in S}$ and $(R\mu^-_s)_{s\in S}$ converge vaguely
to~$R\mu^+$ and~$R\mu^-$, respectively. Since, by~(\ref{7.1}), these
nets are strongly bounded in~$\mathcal E^+$, the property~(C$_2$)
(see~Sec.~\ref{sec:2}) shows that they approach~$R\mu^+$
and~$R\mu^-$, respectively, in the weak topology as well, and so
$R\mu_s\to R\mu$ weakly. This gives, by~(\ref{seminorm}),
$$
\|\mu_s-\mu\|^2=\|R\mu_s-R\mu\|^2=\lim_{l\in
S}\,\kappa(R\mu_s-R\mu,R\mu_s-R\mu_l),
$$
and hence, by the Cauchy-Schwarz inequality,
$$\|\mu_s-\mu\|^2\leqslant
\|\mu_s-\mu\|\,\liminf_{l\in S}\,\|\mu_s-\mu_l\|,
$$
which proves (\ref{str}) as required, because $\|\mu_s-\mu_l\|$
becomes arbitrarily small when $s,\,l\in S$ are both sufficiently
large.

Suppose now that $\kappa$ is strictly positive definite, while all
$A_i$, $i\in I$, are mutually disjoint, and let the net
$(\mu_s)_{s\in S}$ converge strongly to some $\mu_0\in\mathcal
E(\mathbf A)$. Given an $\mathbf A$-vague limit point~$\mu$
of~$(\mu_s)_{s\in S}$, we conclude from~(\ref{str}) that
$\|\mu_0-\mu\|=0$, hence $\mu_0\cong\mu$ since $\kappa$ is strictly
positive definite, and finally $\mu_0\equiv\mu$ because $A_i$, $i\in
I$, are mutually disjoint. In view of~(\ref{leqslant1}), this means
that $\mu_0\in\mathcal
E(\mathbf{A},\leqslant\!\mathbf{a},\mathbf{g})$, which is a part of
the desired conclusion. Moreover, $\mu_0$ has thus been shown to be
identical to any $\mathbf A$-vague cluster point of~$(\mu_s)_{s\in
S}$. Since the $\mathbf A$-vague topology is Hausdorff, this implies
that $\mu_0$ is actually the $\mathbf A$-vague limit
of~$(\mu_s)_{s\in S}$ (cf.~\cite[Chap.~I, \S~9, n$^\circ$\,1,
cor.]{B1}), which completes the proof.\end{proof}

\begin{remark} In view of the fact that
the semimetric space $\mathcal
E(\mathbf{A},\leqslant\!\mathbf{a},\mathbf{g})$ is isometric to its
$R$-image, Theorem~\ref{th:strong} has thus singled out a {\it
strongly complete\/} topological subspace of the pre-Hilbert
space~$\mathcal E$, whose elements are {\it signed\/} measures. This
is of independent interest since, according to a well-known
counterexample by~H.~Cartan~\cite{Car}, all the space~$\mathcal E$
is strongly incomplete even for the Newtonian kernel $|x-y|^{2-n}$
in~$\mathbb R^n$, $n\geqslant3$.\end{remark}

\begin{remark} Assume $\kappa$ is strictly positive
definite (hence, perfect). If moreover $I^-=\varnothing$, then
Theorem~\ref{th:strong} remains valid for $\mathcal E(\mathbf A)$ in
place of $\mathcal E(\mathbf{A},\leqslant\!\mathbf{a},\mathbf{g})$
(cf.~Theorem~\ref{th:1}). A question still unanswered is whether
this is the case if $I^+$ and $I^-$ are both nonempty. We can
however show that this is really so for the Riesz kernels
$|x-y|^{\alpha-n}$, $0<\alpha<n$, in~$\mathbb R^n$, $n\geqslant2$
(cf.~\cite[Th.~1]{Z2}). The proof utilizes Deny's theorem~\cite{D1}
stating that, for the Riesz kernels, $\mathcal E$~can be completed
by making use of distributions of finite energy.\end{remark}

\section{Extremal measures}\label{sec:extremal} To apply Theorem~\ref{th:strong} to the
$\mathcal E(\mathbf{A},\mathbf{a},\mathbf{g})$-problem, we next
proceed by introducing a concept of extremal measure defined as a
strong and, simultaneously, the $\mathbf A$-vague limit of a
minimizing net. Since the energy of such a measure equals the
infimum value $\|\mathcal E(\mathbf{A},\mathbf{a},\mathbf{g})\|^2$,
it serves as a minimizer if and only if it belongs to the class
$\mathcal E(\mathbf{A},\mathbf{a},\mathbf{g})$. See below for the
strict definition and related auxiliary results.

\subsection{Minimizing nets; their $\mathbf A$-vague and strong cluster points}

\begin{definition}We call a net $(\mu_s)_{s\in S}$ {\it minimizing\/} if
$(\mu_s)_{s\in S}\subset\mathcal
E_0(\mathbf{A},\mathbf{a},\mathbf{g})$ and
\begin{equation}\lim_{s\in S}\,\|\mu_s\|^2=\|\mathcal E(\mathbf{A},\mathbf{a},\mathbf{g})\|^2.\label{min}\end{equation}\end{definition}

Let $\mathbb M(\mathbf{A},\mathbf{a},\mathbf{g})$ consist of all
minimizing nets, and let $\mathcal
M(\mathbf{A},\mathbf{a},\mathbf{g})$ stand for the union of all
their $\mathbf A$-vague cluster sets. Note that $\mathbb
M(\mathbf{A},\mathbf{a},\mathbf{g})$ is nonempty, which is clear
from~(\ref{nonzero1}) in view of Corollary~\ref{cor:compact}. Hence,
according to Lemma~\ref{lem:aux2}, $\mathcal
M(\mathbf{A},\mathbf{a},\mathbf{g})$ is nonempty as well, and
\begin{equation}\label{ME} \mathcal M(\mathbf{A},\mathbf{a},\mathbf{g})\subset\mathcal
E(\mathbf{A},\leqslant\!\mathbf{a},\mathbf{g}).\end{equation}

\begin{definition}\label{def:extr} A
measure $\gamma\in\mathcal E(\mathbf A)$ is called {\it extremal\/}
in the $\mathcal E(\mathbf{A},\mathbf{a},\mathbf{g})$-problem if
there exists $(\mu_s)_{s\in S}\in\mathbb
M(\mathbf{A},\mathbf{a},\mathbf{g})$ converging to~$\gamma$ both
strongly and $\mathbf A$-vaguely; such a net $(\mu_s)_{s\in S}$ is
said to {\it generate\/}~$\gamma$. The collection of all extremal
measures will be denoted by $\mathfrak
E(\mathbf{A},\mathbf{a},\mathbf{g})$.
\end{definition}

\subsection{Extremal measures: existence, uniqueness, and compactness}It
follows from Definition~\ref{def:extr} and relations~(\ref{min})
and~(\ref{ME}) that an extremal measure~$\gamma$ (if exists) belongs
to the class $\mathcal
E(\mathbf{A},\leqslant\!\mathbf{a},\mathbf{g})$ and satisfies the
identity
\begin{equation}\label{extremal}
\|\gamma\|^2=\|\mathcal E(\mathcal A,a,g)\|^2.\end{equation}

\begin{lemma}\label{lemma:WM} The following assertions hold true:
\begin{itemize}
\item[\rm(i)] From every minimizing net one can select a subnet generating an extremal measure;
hence, $\mathfrak E(\mathbf{A},\mathbf{a},\mathbf{g})$ is nonempty.
Furthermore,
\begin{equation}\mathfrak E(\mathbf{A},\mathbf{a},\mathbf{g})=\mathcal M(\mathbf{A},\mathbf{a},\mathbf{g}).\label{WM}\end{equation}
\item[\rm(ii)] Every minimizing
net converges strongly to every extremal measure; consequently,
$\mathfrak E(\mathbf{A},\mathbf{a},\mathbf{g})$ is contained in an
equivalence class in~$\mathcal E(\mathbf A)$.
\item[\rm(iii)] The class $\mathfrak E(\mathbf{A},\mathbf{a},\mathbf{g})$ is $\mathbf A$-vaguely compact.
\end{itemize}
\end{lemma}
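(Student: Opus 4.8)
The plan is to prove the three assertions of Lemma~\ref{lemma:WM} in the order (i), (ii), (iii), leaning throughout on the strong completeness of $\mathcal E(\mathbf{A},\leqslant\!\mathbf{a},\mathbf{g})$ (Theorem~\ref{th:strong}) together with Lemmas~\ref{lem:aux1}--\ref{lem:aux2} and Corollary~\ref{cor:aux1}. First I would show that every minimizing net $(\mu_s)_{s\in S}\in\mathbb M(\mathbf{A},\mathbf{a},\mathbf{g})$ is a \emph{strong Cauchy} net. This is the classical convexity argument: by Corollary~\ref{cor:compact} the half-sum $(\mu_s+\mu_l)/2$ lies in $\mathcal E(\mathbf{A},\leqslant\!\mathbf{a},\mathbf{g})$ and, being a convex combination of admissible measures, its energy is bounded below by $\|\mathcal E(\mathbf{A},\mathbf{a},\mathbf{g})\|^2$; applying the parallelogram identity in $\mathcal E$ to $R\mu_s$ and $R\mu_l$ then gives
\[\|\mu_s-\mu_l\|^2=2\|\mu_s\|^2+2\|\mu_l\|^2-4\Bigl\|\tfrac{\mu_s+\mu_l}{2}\Bigr\|^2\leqslant 2\|\mu_s\|^2+2\|\mu_l\|^2-4\|\mathcal E(\mathbf{A},\mathbf{a},\mathbf{g})\|^2,\]
and by~(\ref{min}) the right-hand side tends to $0$ as $s,l$ grow. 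Since a strong Cauchy net is in particular strongly bounded, Lemma~\ref{lem:aux2} supplies an $\mathbf A$-vague cluster point; Theorem~\ref{th:strong} applied to the strong Cauchy net $(\mu_s)_{s\in S}\subset\mathcal E(\mathbf{A},\leqslant\!\mathbf{a},\mathbf{g})$ shows that any such $\mathbf A$-vague cluster point $\gamma$ lies in $\mathcal E(\mathbf{A},\leqslant\!\mathbf{a},\mathbf{g})$ and that $(\mu_s)_{s\in S}$ converges strongly to $\gamma$. Passing to a subnet converging $\mathbf A$-vaguely to $\gamma$ (which still converges strongly to $\gamma$, since strong convergence is inherited by subnets) exhibits a generating net, so $\gamma\in\mathfrak E(\mathbf{A},\mathbf{a},\mathbf{g})$; this proves the first clause of~(i). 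For the identity~(\ref{WM}), the inclusion $\mathfrak E\subset\mathcal M$ is immediate from Definition~\ref{def:extr}, while $\mathcal M\subset\mathfrak E$ follows because an arbitrary $\mathbf A$-vague cluster point of an arbitrary minimizing net was just shown to be extremal.

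For~(ii), let $(\mu_s)_{s\in S}$ be any minimizing net and $\gamma$ any extremal measure, generated by some minimizing net $(\nu_t)_{t\in T}$. Interleaving the two nets over the product directed set $S\times T$ (or simply concatenating them into a single directed set) yields again a minimizing net, which by the Cauchy argument above is strong Cauchy; hence $\|\mu_s-\nu_t\|\to 0$, and since $\|\nu_t-\gamma\|\to 0$ by the choice of $(\nu_t)$, the triangle inequality for the semimetric gives $\|\mu_s-\gamma\|\to 0$. In particular any two extremal measures $\gamma_1,\gamma_2$ satisfy $\|\gamma_1-\gamma_2\|=0$ (take $(\mu_s)$ to be a generating net for $\gamma_1$), so $\mathfrak E(\mathbf{A},\mathbf{a},\mathbf{g})$ lies in a single equivalence class of $\mathcal E(\mathbf A)$.

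For~(iii), I would argue that $\mathfrak E(\mathbf{A},\mathbf{a},\mathbf{g})=\mathcal M(\mathbf{A},\mathbf{a},\mathbf{g})$ is $\mathbf A$-vaguely closed and $\mathbf A$-vaguely bounded, hence $\mathbf A$-vaguely compact by Lemma~\ref{lem:vaguecomp} (it is then closed, so relatively compact plus closed gives compact). Boundedness is clear since by~(\ref{ME}) the class is contained in $\mathcal E(\mathbf{A},\leqslant\!\mathbf{a},\mathbf{g})\subset\mathfrak M(\mathbf{A},\leqslant\!\mathbf{a},\mathbf{g})$, which is $\mathbf A$-vaguely bounded by Lemma~\ref{lem:aux1}. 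For closedness, suppose $(\gamma_d)_{d\in D}\subset\mathfrak E(\mathbf{A},\mathbf{a},\mathbf{g})$ converges $\mathbf A$-vaguely to some $\gamma$; by~(ii) all the $\gamma_d$ are strongly equivalent to one another and each is a strong limit of a fixed minimizing net $(\mu_s)_{s\in S}$, so $\|\gamma_d-\gamma_{d'}\|=0$ for all $d,d'$, and $(\gamma_d)_{d\in D}$ is trivially a strong Cauchy net in $\mathcal E(\mathbf{A},\leqslant\!\mathbf{a},\mathbf{g})$; applying Theorem~\ref{th:strong} to it with $\mathbf A$-vague cluster point $\gamma$ gives $\gamma\in\mathcal E(\mathbf{A},\leqslant\!\mathbf{a},\mathbf{g})$ and $\|\gamma_d-\gamma\|\to 0$, whence $\|\gamma\|^2=\lim_d\|\gamma_d\|^2=\|\mathcal E(\mathbf{A},\mathbf{a},\mathbf{g})\|^2$; moreover $(\mu_s)_{s\in S}$ converges strongly to $\gamma$ as well (it converges strongly to each $\gamma_d$, all strongly equal to $\gamma$), so $\gamma$ is generated by $(\mu_s)_{s\in S}$ and thus extremal, i.e. $\gamma\in\mathfrak E(\mathbf{A},\mathbf{a},\mathbf{g})$.

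I expect the only genuinely delicate point to be the bookkeeping around nets: constructing the ``merged'' minimizing net in~(ii) so that the Cauchy estimate really applies, and, in~(iii), confirming that a strong limit point inherited from $\mathbf A$-vague convergence is actually \emph{generated} by a minimizing net rather than merely having the right energy. Both hinge on the fact, furnished by Theorem~\ref{th:strong}, that inside $\mathcal E(\mathbf{A},\leqslant\!\mathbf{a},\mathbf{g})$ the $\mathbf A$-vague cluster points of a strong Cauchy net coincide with its strong limit; once that is invoked correctly, everything else is the standard Hilbert-space convexity machinery transported through the isometry $\mu\mapsto R\mu$.
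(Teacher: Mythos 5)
Your parts (i) and (ii) are essentially the paper's own argument: convexity of $\mathcal E(\mathbf{A},\mathbf{a},\mathbf{g})$ together with the parallelogram identity applied to $R\mu_s$ and $R\nu_t$ gives the Cauchy property, and Theorem~\ref{th:strong} upgrades an $\mathbf A$-vague cluster point to a strong limit, so that a vaguely convergent subnet generates it. Two small points: the admissibility of $\tfrac12(\mu_s+\mu_l)$ follows from the convexity of $\mathcal E(\mathbf{A},\mathbf{a},\mathbf{g})$ itself, not from Corollary~\ref{cor:compact}; and for nets ``strong Cauchy'' only gives \emph{eventual} strong boundedness, so one should pass to a tail (or subnet) before invoking Lemma~\ref{lem:aux2}. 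Also, the paper avoids your ``interleaving'' device in (ii) by proving the two-net relation $\lim_{(s,t)\in S\times T}\|\mu_s-\nu_t\|^2=0$ directly, which is cleaner than merging directed sets, though your route can be made rigorous.

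In (iii), however, there is a genuine gap at the last step. You conclude that the $\mathbf A$-vague limit $\gamma$ of $(\gamma_d)_{d\in D}\subset\mathfrak E(\mathbf{A},\mathbf{a},\mathbf{g})$ is ``generated by $(\mu_s)_{s\in S}$'' because that fixed minimizing net converges \emph{strongly} to $\gamma$. But Definition~\ref{def:extr} requires a minimizing net converging to $\gamma$ both strongly \emph{and} $\mathbf A$-vaguely, and under the standing assumptions strong convergence does not imply $\mathbf A$-vague convergence --- that implication is precisely what the final clause of Theorem~\ref{th:strong} needs strict positive definiteness and mutually disjoint plates for. Nothing in your argument produces a minimizing net having $\gamma$ as an $\mathbf A$-vague cluster point: the net generating $\gamma_d$ converges vaguely to $\gamma_d$, and since the $\mathbf A$-vague topology is Hausdorff it cannot also converge vaguely to $\gamma$ unless $\gamma\equiv\gamma_d$; nor is $(\gamma_d)_{d\in D}$ itself a minimizing net, since its members need not belong to $\mathcal E_0(\mathbf{A},\mathbf{a},\mathbf{g})$ (at this stage one does not even know $\int g_i\,d\gamma_d^i=a_i$). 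All you have actually established is $\gamma\in\mathcal E(\mathbf{A},\leqslant\!\mathbf{a},\mathbf{g})$, $\|\gamma\|^2=\|\mathcal E(\mathbf{A},\mathbf{a},\mathbf{g})\|^2$ and $\|\gamma-\gamma_d\|=0$, which does not place $\gamma$ in $\mathfrak E(\mathbf{A},\mathbf{a},\mathbf{g})$. The paper closes exactly this point with the theorem on iterated limits: from a subnet $(\gamma_t)_{t\in T}$ converging $\mathbf A$-vaguely to $\gamma_0$ and, for each $t$, a minimizing net converging $\mathbf A$-vaguely to $\gamma_t$, it assembles a single net indexed by the directed product $T\times\prod\{S_t:t\in T\}$ which is again minimizing and converges $\mathbf A$-vaguely to $\gamma_0$, whence $\gamma_0\in\mathcal M(\mathbf{A},\mathbf{a},\mathbf{g})=\mathfrak E(\mathbf{A},\mathbf{a},\mathbf{g})$. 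Your proof needs this diagonal construction (or an equivalent one) to be complete.
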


\begin{proof}Fix $(\mu_s)_{s\in S}$ and $(\nu_t)_{t\in T}$ in $\mathbb
M(\mathbf{A},\mathbf{a},\mathbf{g})$. It is seen with standard
arguments that
\begin{equation}
\lim_{(s,t)\in S\times T}\,\|\mu_s-\nu_t\|^2=0, \label{fund}
\end{equation}
where $S\times T$ denotes the directed product of the directed
sets~$S$ and~$T$ (see, e.\,g.,~\cite[Chap.~2,~\S~3]{K}). Indeed, by
the convexity of the class $\mathcal
E(\mathbf{A},\mathbf{a},\mathbf{g})$,
$$
2\,\|\mathcal E(\mathbf{A},\mathbf{a},\mathbf{g})\|
\leqslant{\|\mu_s+\nu_t\|}\leqslant\|\mu_s\|+\|\nu_t\|,
$$
and hence, by (\ref{min}),
$$
\lim_{(s,t)\in S\times T}\,\|\mu_s+\nu_t\|^2=4\,\|\mathcal
E(\mathbf{A},\mathbf{a},\mathbf{g})\|^2.
$$
Then the parallelogram identity in~$\mathcal E$, applied to~$R\mu_s$
and~$R\nu_t$, gives~(\ref{fund}).

Relation~(\ref{fund}) implies that the net $(\mu_s)_{s\in S}$ is
strongly fundamental. Therefore, if $\mu_0$ is one of its $\mathbf
A$-vague cluster points (such a~$\mu_0$ exists), then
$\mu_s\to\mu_0$ strongly by Theorem~\ref{th:strong}. This means that
$\mu_0$ is an extremal measure and, hence, $\mathcal
M(\mathbf{A},\mathbf{a},\mathbf{g})\subset\mathfrak
E(\mathbf{A},\mathbf{a},\mathbf{g})$. Since the inverse inclusion is
obvious, (\ref{WM}) follows.

To prove (ii), fix $(\mu_s)_{s\in S}\in\mathbb
M(\mathbf{A},\mathbf{a},\mathbf{g})$ and $\gamma\in\mathfrak
E(\mathbf{A},\mathbf{a},\mathbf{g})$. By Definition~\ref{def:extr},
one can choose a net in~$\mathbb
M(\mathbf{A},\mathbf{a},\mathbf{g})$, say $(\nu_t)_{t\in T}$,
converging to~$\gamma$ strongly. Repeated application
of~(\ref{fund}) shows that also $(\mu_s)_{s\in S}$ converges
to~$\gamma$ strongly, as claimed.

To verify (iii), it is enough to show that $\mathcal
M(\mathbf{A},\mathbf{a},\mathbf{g})$ is $\mathbf A$-vaguely compact.
Fix $(\gamma_s)_{s\in S}\subset\mathcal
M(\mathbf{A},\mathbf{a},\mathbf{g})$. It follows from~(\ref{ME})
and~Lemma~\ref{lem:aux1} that there exists an $\mathbf{A}$-vague
cluster point~$\gamma_0$ of $(\gamma_s)_{s\in S}$; let
$(\gamma_t)_{t\in T}$ be a subnet of~$(\gamma_s)_{s\in S}$ that
converges $\mathbf{A}$-vaguely to~$\gamma_0$. Then, for every $t\in
T$, there exists $(\mu_{s_t})_{s_t\in S_t}\in\mathbb
M(\mathbf{A},\mathbf{a},\mathbf{g})$ converging $\mathbf A$-vaguely
to~$\gamma_t$. Consider the Cartesian product $\prod\,\{S_t: t\in
T\}$~--- that is, the collection of all functions~$\beta$ on~$T$
with $\beta(t)\in S_t$, and let~$D$ denote the directed product
$T\times\prod\,\{S_t: t\in T\}$ (see,
e.\,g.,~\cite[Chap.~2,~\S~3]{K}). Given $(t,\beta)\in D$, write
$\mu_{(t,\beta)}:=\mu_{\beta(t)}$.  Then the theorem on iterated
limits from \cite[Chap.~2, \S~4]{K} yields that the net
$(\mu_{(t,\beta)})_{(t,\beta)\in D}$ belongs to~$\mathbb
M(\mathbf{A},\mathbf{a},\mathbf{g})$ and converges $\mathbf
A$-vaguely to~$\gamma_0$. Thus, $\gamma_0\in\mathcal
M(\mathbf{A},\mathbf{a},\mathbf{g})$ as was to be proved.\end{proof}

\begin{corollary}\label{cor:WS}
Every minimizing measure $\lambda=\lambda_{\mathbf{A}}$ (if exists)
is extremal, i.\,e.,
\begin{equation}\label{WS}\mathfrak S(\mathbf{A},\mathbf{a},\mathbf{g})\subset
\mathfrak E(\mathbf{A},\mathbf{a},\mathbf{g}).\end{equation}\end{corollary}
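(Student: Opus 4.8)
The plan is to derive Corollary~\ref{cor:WS} directly from Definition~\ref{def:extr} together with the defining properties of a minimizing measure. The key observation is that any single minimizing measure~$\lambda_{\mathbf A}$, regarded as a constant net, is itself a minimizing net in the sense of the definition preceding Lemma~\ref{lemma:WM}, and it trivially converges to itself both strongly and $\mathbf A$-vaguely; hence $\lambda_{\mathbf A}$ is extremal. The only point requiring verification is that a minimizing measure belongs to $\mathcal E_0(\mathbf A,\mathbf a,\mathbf g)$, not merely to $\mathcal E(\mathbf A,\mathbf a,\mathbf g)$ --- this is needed because the notion of a minimizing net in the excerpt is defined with $\mathcal E_0(\mathbf A,\mathbf a,\mathbf g)$, i.e.\ with compactly supported coordinates.

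First I would spell out the trivial direction: if $\lambda_{\mathbf A}\in\mathfrak S(\mathbf A,\mathbf a,\mathbf g)$ exists, then by definition $\lambda_{\mathbf A}\in\mathcal E(\mathbf A,\mathbf a,\mathbf g)$ and $\|\lambda_{\mathbf A}\|^2=\|\mathcal E(\mathbf A,\mathbf a,\mathbf g)\|^2$. To present $\lambda_{\mathbf A}$ as the (strong and $\mathbf A$-vague) limit of a minimizing net with coordinates of compact support, I would invoke the continuity/exhaustion machinery already available: using Lemma~1.2.2 from~\cite{F1} exactly as in the proof of Lemma~\ref{lemma.cont}, one truncates each $\lambda_{\mathbf A}^i$ to a compact $K_i\subset A_i$ and renormalizes, obtaining for each compact condenser $\mathbf K\in\{\mathbf K\}_{\mathbf A}$ following a suitable $\mathbf K^0$ a measure $\hat\lambda_{\mathbf K}\in\mathcal E_0(\mathbf A,\mathbf a,\mathbf g)$ with $\|\hat\lambda_{\mathbf K}\|^2\to\|\lambda_{\mathbf A}\|^2=\|\mathcal E(\mathbf A,\mathbf a,\mathbf g)\|^2$ (this is precisely relation~(\ref{4w}) in the proof of Lemma~\ref{lemma.cont}, applied to $\mu=\lambda_{\mathbf A}$). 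Thus $(\hat\lambda_{\mathbf K})$ is a minimizing net. By~(\ref{w}) it converges $\mathbf A$-vaguely to $\lambda_{\mathbf A}$ coordinatewise, and by the estimates~(\ref{uniff}) it converges strongly to $\lambda_{\mathbf A}$ as well. Hence $\lambda_{\mathbf A}$ is generated by a minimizing net, so $\lambda_{\mathbf A}\in\mathfrak E(\mathbf A,\mathbf a,\mathbf g)$.

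Alternatively --- and this may be the cleaner route to record --- one can bypass the explicit truncation by combining Lemma~\ref{lemma:WM} with the fact that the infimum defining $\|\mathcal E(\mathbf A,\mathbf a,\mathbf g)\|^2$ equals $\|\mathcal E_0(\mathbf A,\mathbf a,\mathbf g)\|^2$ (Corollary~\ref{cor:compact}): take any minimizing net $(\mu_s)_{s\in S}$, which by Lemma~\ref{lemma:WM}(i) has a subnet generating some extremal $\gamma$, and then by Lemma~\ref{lemma:WM}(ii) \emph{every} minimizing net converges strongly to $\gamma$. Since $\|\lambda_{\mathbf A}\|^2=\|\mathcal E(\mathbf A,\mathbf a,\mathbf g)\|^2=\|\mathcal E_0(\mathbf A,\mathbf a,\mathbf g)\|^2$, the constant net equal to $\lambda_{\mathbf A}$ --- or rather its compactly supported approximants as above --- is minimizing, forcing $\lambda_{\mathbf A}$ to be strongly equivalent to $\gamma$; as $\gamma$ is extremal and $\mathfrak E(\mathbf A,\mathbf a,\mathbf g)$ is an equivalence class (again Lemma~\ref{lemma:WM}(ii)), we conclude $\lambda_{\mathbf A}\in\mathfrak E(\mathbf A,\mathbf a,\mathbf g)$, which is~(\ref{WS}).

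I expect the main --- indeed the only --- obstacle to be the bookkeeping needed to certify that $\lambda_{\mathbf A}$ is approachable by a net \emph{inside $\mathcal E_0$}, i.e.\ with each coordinate compactly supported; this is genuinely where Corollary~\ref{cor:compact} (equivalently Lemma~\ref{lemma.cont} and its truncation argument) does the work. Everything else is formal: strong equivalence of all extremal measures and the coincidence of the $\mathcal E_0$- and $\mathcal E$-infima are already established. So the proof is short, and the subtlety is purely that the definition of "minimizing net" was phrased with $\mathcal E_0$ rather than with all of $\mathcal E(\mathbf A,\mathbf a,\mathbf g)$.
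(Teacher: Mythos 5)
Your main route is essentially the paper's own proof: truncate and renormalize $\lambda$ via~(\ref{hatmu}), use~(\ref{4w}) (with $\mu$ replaced by~$\lambda$) to see that $(\hat\lambda_{\mathbf K})_{\mathbf K\in\{\mathbf K\}_{\mathbf A}}$ is a minimizing net, and observe that it converges $\mathbf A$-vaguely to~$\lambda$. The one difference is the finish: the paper does \emph{not} verify strong convergence of $\hat\lambda_{\mathbf K}$ to~$\lambda$ at all --- it concludes $\lambda\in\mathcal M(\mathbf A,\mathbf a,\mathbf g)$ from the $\mathbf A$-vague convergence alone and then invokes~(\ref{WM}), i.e.\ $\mathcal M=\mathfrak E$ from Lemma~\ref{lemma:WM},(i). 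That is cleaner than your appeal to~(\ref{uniff}): as stated, strong convergence of the \emph{renormalized} truncations also needs~(\ref{unif2}) and~(\ref{unif3}), a summation over~$i$, and the reduction to $\kappa\geqslant0$ made in the proof of Lemma~\ref{lemma.cont}, so your one-line justification is loose (though repairable). Also, in your alternative route the last step misreads Lemma~\ref{lemma:WM},(ii): it asserts that $\mathfrak E(\mathbf A,\mathbf a,\mathbf g)$ is \emph{contained in} an equivalence class of~$\mathcal E(\mathbf A)$, not that it \emph{is} one, so ``$\lambda$ is strongly equivalent to an extremal $\gamma$'' does not by itself give $\lambda\in\mathfrak E(\mathbf A,\mathbf a,\mathbf g)$; extremality must come from Definition~\ref{def:extr} or from~(\ref{WM}), exactly as in your main route and in the paper.
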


\begin{proof} For every sufficiently large
$\mathbf{K}\in\{\mathbf{K}\}_{\mathbf{A}}$, we define
$\hat{\lambda}_{\mathbf{K}}$ by~(\ref{hatmu}) with~$\lambda$ instead
of~$\mu$. Then
$(\hat{\lambda}_{\mathbf{K}})_{\mathbf{K}\in\{\mathbf{K}\}_{\mathbf{A}}}$
belongs to $\mathbb M(\mathbf{A},\mathbf{a},\mathbf{g})$, which is
clear from~(\ref{4w}) with~$\mu$ replaced by~$\lambda$. On the other
hand, this net converges $\mathbf A$-vaguely to~$\lambda$; hence,
$\lambda\in\mathcal M(\mathbf{A},\mathbf{a},\mathbf{g})$. Combined
with~(\ref{WM}), this completes the proof.
\end{proof}

\subsection{Central lemma} The
following assertion will be central in the proof of
Theorem~\ref{exist}.

\begin{lemma}\label{lemma:exist}
Fix $i\in I$ and assume that either $g_{i,\sup}<\infty$, or
{\rm(\ref{growth})} holds for some $r_i\in(1,\infty)$ and
$\omega_i\in\mathcal E$. If, moreover, $A_i$ either is compact or
has finite interior capacity, then
\begin{equation}
\int g_i\,d\gamma^i=a_i\quad\mbox{for all \ }\gamma\in\mathfrak
E(\mathbf{A},\mathbf{a},\mathbf{g}). \label{24}
\end{equation}\end{lemma}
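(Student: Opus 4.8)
The plan is to split the identity~(\ref{24}) into the obvious inequality $\int g_i\,d\gamma^i\leqslant a_i$ and the substantive one $\int g_i\,d\gamma^i\geqslant a_i$, reducing the latter to a statement of uniform tightness. Fix $\gamma\in\mathfrak E(\mathbf A,\mathbf a,\mathbf g)$ and, by Definition~\ref{def:extr}, a net $(\mu_s)_{s\in S}\in\mathbb M(\mathbf A,\mathbf a,\mathbf g)$ generating it; thus $\mu_s\to\gamma$ both strongly and $\mathbf A$-vaguely, and, since $(\mu_s)\subset\mathcal E_0(\mathbf A,\mathbf a,\mathbf g)$, one has $\int g_i\,d\mu_s^i=a_i$ for every $s\in S$. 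As $g_i$ is continuous and positive, $g_i\in\mathrm\Phi(\mathrm X)$, so Lemma~\ref{lemma:lower} provides the vague lower semicontinuity of $\nu\mapsto\int g_i\,d\nu$, whence $\int g_i\,d\gamma^i\leqslant\liminf_s\int g_i\,d\mu_s^i=a_i$; in particular $g_i\gamma^i$ is a finite measure. The nonnegative measures $g_i\mu_s^i$ converge vaguely to $g_i\gamma^i$ (test against $\varphi\in\mathrm C_0(\mathrm X)$, noting $\varphi g_i\in\mathrm C_0(\mathrm X)$) and all have total mass $a_i$; hence, by a standard cut-off argument, $\int g_i\,d\mu_s^i\to\int g_i\,d\gamma^i$~--- that is, $\int g_i\,d\gamma^i=a_i$~--- as soon as the family $(g_i\mu_s^i)_{s\in S}$ is uniformly tight, i.e.
\[\lim_{K\uparrow A_i}\,\sup_{s\in S}\,\int_{A_i\setminus K}g_i\,d\mu_s^i=0,\]
$K$ running over the compact subsets of $A_i$. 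Everything thus comes down to this tightness.

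Being strongly convergent, $(\mu_s)$ is strongly bounded, so by Corollary~\ref{cor:aux1} we have $M:=\sup_{s\in S}\|\mu_s^i\|<\infty$. If $A_i$ is compact, the tightness is trivial (take $K=A_i$). Assume then $C(A_i)<\infty$. The key point is that a family of nonnegative measures on $A_i$ with uniformly bounded energy cannot let mass escape to infinity:
\[\lim_{K\uparrow A_i}\,\sup\bigl\{\mu(A_i\setminus K):\ \mu\in\mathcal E^+(A_i),\ \|\mu\|\leqslant M\bigr\}=0,\]
a standard consequence of $C(A_i)<\infty$; it rests on the capacity estimate $\mu(E)^2\leqslant C(E)\,\|\mu\|^2$ for $\mu\in\mathcal E^+(E)$ (obtained by testing $\mu(E)$ against the equilibrium potential of $E$ and using that $\mu\in\mathcal E$ does not charge sets of interior capacity zero), together with $C(A_i\setminus K)\to0$ as $K\uparrow A_i$; cf.~\cite{F1}. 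Hence $\sup_s\mu_s^i(A_i\setminus K)\to0$ as $K\uparrow A_i$. If $g_{i,\sup}<\infty$, then $\int_{A_i\setminus K}g_i\,d\mu_s^i\leqslant g_{i,\sup}\,\mu_s^i(A_i\setminus K)$ and the tightness follows at once. If instead~(\ref{growth}) holds, then, (\ref{growth}) being valid $\mu_s^i$-almost everywhere,
\[\int g_i^{r_i}\,d\mu_s^i\leqslant\kappa(\omega_i,\mu_s^i)\leqslant\|\omega_i\|\,\|\mu_s^i\|\leqslant\|\omega_i\|\,M=:M'<\infty,\]
and Hölder's inequality with exponents $r_i$ and $r_i/(r_i-1)$ gives
\[\int_{A_i\setminus K}g_i\,d\mu_s^i\leqslant(M')^{1/r_i}\,\mu_s^i(A_i\setminus K)^{1-1/r_i},\]
which again tends to $0$ uniformly in $s$ as $K\uparrow A_i$; this is the desired tightness.

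The hard part is precisely the uniform tightness in the case $C(A_i)<\infty$: the no-escape-of-mass property of bounded-energy measures living on a set of finite interior capacity. This is where the hypothesis $C(A_i)<\infty$ enters, through the capacity estimate above and $C(A_i\setminus K)\to0$; and, when $g_i$ is unbounded, where the growth condition~(\ref{growth}) enters~--- its sole role being to trade the (uniform) smallness of the escaping mass for the (uniform) smallness of its $g_i$-integral via Hölder's inequality, the bound $\sup_s\int g_i^{r_i}\,d\mu_s^i<\infty$ coming for free from the strong boundedness of the net. Note that minimality of $(\mu_s)$ plays no role here: all that is used is that $(\mu_s)$ lies in $\mathcal E_0(\mathbf A,\mathbf a,\mathbf g)$, is strongly bounded, and converges $\mathbf A$-vaguely to $\gamma$.
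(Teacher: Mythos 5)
Your overall architecture is sound and in fact runs parallel to the paper's own argument: the reduction of~(\ref{24}) to uniform tightness of $(g_i\mu_s^i)$, the capacity estimate $\mu(E)^2\leqslant C(E)\,\|\mu\|^2$ via the interior equilibrium measure of $E=A_i\setminus K$, and the H\"older step for unbounded $g_i$ are exactly the ingredients the paper combines into the single bound $\int g_i\,\chi_{K^c_i}\,d\mu_s^i\leqslant\|\omega_i\|^{1/r_i}\,\|\theta_{K^c_i}\|^{1/q_i}\,\|\mu_s^i\|$. The genuine gap is at the one point you label ``a standard consequence of $C(A_i)<\infty$'', namely that $C(A_i\setminus K)\to0$ as $K$ increases through the compact subsets of~$A_i$. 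This is not a consequence of $C(A_i)<\infty$ alone, nor a ready-made statement one can simply cite from~\cite{F1}: interior capacity has no general continuity property along decreasing families of noncompact sets, and for a merely positive definite kernel the interior equilibrium measures $\theta_{A_i\setminus K}$ your estimate relies on need not even exist. What makes both facts available here is the standing consistency hypothesis on~$\kappa$: by Lemma~4.1.1 and Theorem~4.1 of~\cite{F1} the net $(\theta_{K^c_i})$ is strongly Cauchy (its norms satisfy $\|\theta_{K^c_i}\|^2=C(K^c_i)$ and are nonincreasing), it converges vaguely to zero because its supports eventually avoid every compact set, and property~(C$_1$) then forces strong convergence to zero --- precisely the paper's relation~(\ref{27}), which is your claim $C(A_i\setminus K)\to0$. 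This is the crux of the whole lemma; your write-up neither proves it nor records that consistency is what it rests on, so the ``hard part'' you correctly isolate is in fact left unproved.

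Two smaller points. Corollary~\ref{cor:aux1} needs strong boundedness, which for a \emph{net} does not follow from strong convergence alone --- pass to a tail or subnet, as the paper does. And your restriction and comparison steps ($\|\mu|_E\|\leqslant\|\mu\|$, $\int_E\leqslant\int_{\mathrm X}$) tacitly use $\kappa\geqslant0$; this is legitimate because in the nontrivial case $A_i$, hence $\mathrm X$, is noncompact, but it should be said. Apart from the missing tightness proof, your rearrangement --- mass tightness first via the capacity estimate, then H\"older with $\sup_s\int g_i^{r_i}\,d\mu_s^i\leqslant\|\omega_i\|\,\sup_s\|\mu_s^i\|$ --- is a correct variant of the paper's proof, and your direct treatment of the case $g_{i,\sup}<\infty$ is even slightly simpler than the paper's reduction via $\omega_i:=M_1^{r_i}\theta_{A_i}$.
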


\begin{proof}Given $\gamma\in\mathfrak E(\mathbf{A},\mathbf{a},\mathbf{g})$,
choose a net $(\mu_s)_{s\in S}\in\mathbb
M(\mathbf{A},\mathbf{a},\mathbf{g})$ that converges to~$\gamma$ both
strongly and $\mathbf A$-vaguely. Taking a subnet if necessary, one
can certainly assume $(\mu_s)_{s\in S}$ to be strongly bounded.

Of course, (\ref{24}) needs to be proved only if the set~$A_i$ is
noncompact; then its capacity has to be finite. Hence,
by~\cite[Th.~4.1]{F1}, for every $E\subset A_i$ there exists a
measure $\theta_E\in\mathcal E^+(\,\overline{E}\,)$, called an
interior equilibrium measure associated with~$E$, which admits the
properties
\begin{equation}
\theta_E(\mathbf X)=\|\theta_E\|^2=C(E), \label{5}
\end{equation}
\begin{equation}
\kappa(x,\theta_E)\geqslant1\quad\mbox{n.\,e. in \ } E. \label{6}
\end{equation}

Also observe that there is no loss of generality in assuming $g_i$
to satisfy~(\ref{growth}) for some $r_i\in(1,\infty)$ and
$\omega_i\in\mathcal E$. Indeed, otherwise $g_i$ has to be bounded
from above (say by $M_1$), which combined with~(\ref{6}) again
gives~(\ref{growth}) for $\omega_i:=M_1^{r_i}\,\theta_{A_i}$,
$r_i\in(1,\infty)$ being arbitrary.

To establish (\ref{24}), we shall treat $A_i$ as a locally compact
space with the topology induced from~$\mathrm X$. Let $\chi_E$
denote the characteristic function of a given set $E\subset A_i$ and
let $E^c:=A_i\setminus E$. Further, let $\{K_i\}$ be the increasing
family of all compact subsets~$K_i$ of~$A_i$. Since $g_i\chi_{K_i}$
is upper semicontinuous on~$A_i$ while $(\mu_s^i)_{s\in S}$
converges to~$\gamma^i$ vaguely, from Lemma~\ref{lemma:lower} we get
\[
\int g_i\,\chi_{K_i}\,d\gamma^i\geqslant\limsup_{s\in S}\int
g_i\,\chi_{K_i}\,d\mu_s^i\quad\mbox{for every \ } K_i\in\{K_i\}.\]
On the other hand, application of Lemma~1.2.2 from~\cite{F1} yields
\[
\int g_i\,d\gamma^i=\lim_{K_i\in\{K_i\}}\,\int
g_i\,\chi_{K_i}\,d\gamma^i.\] Combining the last two relations, we
obtain
\[
a_i\geqslant\int g_i\,d\gamma^i\geqslant \limsup_{(s,\,K_i)\in
S\times\{K_i\}}\,\int g_i\,\chi_{K_i}\,d\mu_s^i=
a_i-\liminf_{(s,\,K_i)\in S\times\{K_i\}}\,\int
g_i\,\chi_{K^c_i}\,d\mu_s^i,\] $S\times\{K_i\}$ being the directed
product of the directed sets~$S$ and~$\{K_i\}$. Hence, if we prove
that
\begin{equation}
\liminf_{(s,\,K_i)\in S\times\{K_i\}}\,\int
g_i\,\chi_{K^c_i}\,d\mu_s^i=0, \label{25}
\end{equation}
the desired relation~(\ref{24}) follows.

Consider an interior equilibrium measure $\theta_{K^c_i}$, where
$K_i\in\{K_i\}$ is given. Then application of~Lemma~4.1.1 and
Theorem~4.1 from~\cite{F1} shows that
\[
\|\theta_{K^c_i}-\theta_{\tilde{K}^c_i}\|^2\leqslant
\|\theta_{K^c_i}\|^2-\|\theta_{\tilde{K}^c_i}\|^2\quad\mbox{provided
\ }K_i\subset\tilde{K}_i.\] Furthermore, it is clear from~(\ref{5})
that the net $\|\theta_{K^c_i}\|$, $K_i\in\{K_i\}$, is bounded and
nonincreasing, and hence fundamental in $\mathbb R$. The preceding
inequality thus shows that the net
$(\theta_{K^c_i})_{K_i\in\{K_i\}}$ is strongly fundamental in
$\mathcal E$. Since, clearly, it converges vaguely to zero, the
property~(C$_1$) (see.~Sec.~\ref{sec:2}) implies immediately that
zero is also one of its strong limits and, hence,
\begin{equation}
\lim_{K_i\in\{K_i\}}\,\|\theta_{K^c_i}\|=0. \label{27}
\end{equation}

Write $q_i:=r_i(r_i-1)^{-1}$, where $r_i\in(1,\infty)$ is a number
involved in condition~(\ref{growth}). Combining (\ref{growth}) with
(\ref{6}) shows that the inequality
\[
g_i(x)\,\chi_{K^c_i}(x)\leqslant\kappa(x,\omega_i)^{1/r_i}\,
\kappa(x,\theta_{K^c_i})^{1/q_i}\]  subsists n.\,e.~in~$A_i$, and
hence $\mu_s^i$-almost everywhere in~$A_i$ by virtue of~Lemma~2.3.1
from~\cite{F1} and the fact that $\mu_s^i$ is a measure of finite
energy, compactly supported in~$A_i$. Having integrated this
relation with respect to $\mu_s^i$, we then apply the H\"older and,
subsequently, the Cauchy-Schwarz inequalities to the integrals on
the right. This gives
\begin{eqnarray*}
\int g_i\,\chi_{K^c_i}\,d\mu_s^i&\!\!\!\leqslant\!\!\!&
\Bigl[\int\kappa(x,\omega_i)\,d\mu_s^i(x)\Bigr]^{1/r_i}\,
\Bigl[\int\kappa(x,\theta_{K^c_i})\,d\mu_s^i(x)\Bigr]^{1/q_i}\\
{}
&\!\!\!\leqslant\!\!\!&\|\omega_i\|^{1/r_i}\,\|\theta_{K^c_i}\|^{1/q_i}\,\|\mu_s^i\|.
\end{eqnarray*}
Taking limits here along $S\times\{K\}$ and using (\ref{7.1i}) and
(\ref{27}), we obtain~(\ref{25}) as desired.
\end{proof}

\begin{corollary}\label{cor:j} Assume that for every $i\in I$ either
$g_{i,\sup}<\infty$, or {\rm(\ref{growth})} holds for some
$r_i\in(1,\infty)$ and $\omega_i\in\mathcal E$. If moreover $\kappa$
is strictly positive positive (hence, perfect), while
$C(A_j)<\infty$ for some $j\in I$, then ${\rm
cap}\,(\mathbf{A},\mathbf{a},\mathbf{g})$ is finite as
well\,\footnote{Cf.~Lemma~\ref{lemma:finite}.}.
\end{corollary}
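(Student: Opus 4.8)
The statement to prove is Corollary~\ref{cor:j}: under the growth hypothesis on each $g_i$, strict positive definiteness of $\kappa$, and the existence of $j\in I$ with $C(A_j)<\infty$, we conclude ${\rm cap}\,(\mathbf{A},\mathbf{a},\mathbf{g})<\infty$, i.e., $\|\mathcal E(\mathbf{A},\mathbf{a},\mathbf{g})\|^2>0$. The natural route is to run a proof by contradiction parallel to that of Lemma~\ref{lemma:finite}: assume ${\rm cap}\,(\mathbf{A},\mathbf{a},\mathbf{g})=\infty$, i.e.\ $\|\mathcal E(\mathbf{A},\mathbf{a},\mathbf{g})\|^2=0$, and derive a contradiction with $C(A_j)<\infty$. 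The point is that a vanishing infimum of energies forces, via the machinery of extremal measures already built in Section~\ref{sec:extremal}, the existence of an extremal measure $\gamma\in\mathfrak E(\mathbf{A},\mathbf{a},\mathbf{g})$ with $\|\gamma\|^2=\|\mathcal E(\mathbf{A},\mathbf{a},\mathbf{g})\|^2=0$; then strict positive definiteness yields $\gamma\cong 0$, hence $\gamma^i=0$ for all $i$ (in the equivalence-in-$\mathfrak M(\mathbf A)$ sense one has $R\gamma=0$, so each $R\gamma^+,R\gamma^-$ vanishes, and since the $\mu^i$ are the nonnegative summands of $R\gamma^\pm$, each $\gamma^i=0$). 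But Lemma~\ref{lemma:exist} applies coordinatewise: for every $i\in I$ satisfying the growth condition together with ``$A_i$ compact or $C(A_i)<\infty$'', one has $\int g_i\,d\gamma^i=a_i>0$, contradicting $\gamma^i=0$. In particular this holds for $i=j$, where $C(A_j)<\infty$ is exactly the hypothesis we are given.

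\medskip
The steps, in order, are: (1) suppose for contradiction that $\|\mathcal E(\mathbf{A},\mathbf{a},\mathbf{g})\|^2=0$. Note first that $\mathcal E(\mathbf{A},\mathbf{a},\mathbf{g})$ is nonempty---otherwise the infimum is $+\infty$, not $0$---so by Corollary~\ref{cor:compact} the class of minimizing nets $\mathbb M(\mathbf{A},\mathbf{a},\mathbf{g})$ is nonempty. (2) Invoke Lemma~\ref{lemma:WM}(i): extract from any minimizing net a subnet generating an extremal measure $\gamma\in\mathfrak E(\mathbf{A},\mathbf{a},\mathbf{g})$, which is nonempty. By~(\ref{extremal}), $\|\gamma\|^2=\|\mathcal E(\mathbf{A},\mathbf{a},\mathbf{g})\|^2=0$. (3) Since $\kappa$ is strictly positive definite, $\|\gamma\|=0$ forces $\gamma\cong 0$ in $\mathfrak M(\mathbf A)$, i.e.\ $R\gamma=0$; hence $R\gamma^+=R\gamma^-=0$, and since $R\gamma^+=\sum_{i\in I^+}\gamma^i$ and $R\gamma^-=\sum_{i\in I^-}\gamma^i$ are sums of nonnegative measures, we get $\gamma^i=0$ for all $i\in I$. (4) Apply Lemma~\ref{lemma:exist} with $i=j$: the hypotheses on $g_j$ are assumed, and $C(A_j)<\infty$, so~(\ref{24}) gives $\int g_j\,d\gamma^j=a_j>0$, contradicting $\gamma^j=0$. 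Therefore $\|\mathcal E(\mathbf{A},\mathbf{a},\mathbf{g})\|^2>0$, i.e.\ ${\rm cap}\,(\mathbf{A},\mathbf{a},\mathbf{g})<\infty$.

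\medskip
The only delicate point---and the one I would state carefully---is Step~(3): passing from $\|\gamma\|=0$ to the vanishing of the individual coordinates $\gamma^i$. This uses strict positive definiteness (to get $R\gamma=0$ from zero energy, via $\|\cdot\|_{\mathcal E}$ being a genuine norm on $\mathcal E$) together with the structural fact, recorded in Section~\ref{sec:3}, that $R\gamma^+$ and $R\gamma^-$ are the Jordan components of $R\gamma$ and decompose as $\sum_{i\in I^\pm}\gamma^i$ with all summands nonnegative; a sum of nonnegative Radon measures vanishes only if every summand does. Note that here we do \emph{not} need the $A_i$ to be mutually disjoint---we only need each $\gamma^i=0$ as a measure, which follows purely from $R\gamma^\pm=0$. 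Everything else is a routine bookkeeping invocation of results already proved: Corollary~\ref{cor:compact}, Lemma~\ref{lemma:WM}, and Lemma~\ref{lemma:exist}. I do not anticipate any genuine obstacle; the corollary is essentially the converse direction of Lemma~\ref{lemma:finite} promised in Remark~\ref{ns}, and all the heavy lifting (the central Lemma~\ref{lemma:exist}, which is where the growth condition~(\ref{growth}) and the equilibrium-measure argument enter) has already been done.
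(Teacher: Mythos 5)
Your proof is correct and follows essentially the same route as the paper: both rest on the existence of an extremal measure via Lemma~\ref{lemma:WM}, the identity~(\ref{extremal}), Lemma~\ref{lemma:exist} applied at the index $j$ with $C(A_j)<\infty$, and the strict positive definiteness of $\kappa$. The only difference is presentational: you argue by contradiction (a zero infimum would force the extremal measure to vanish coordinatewise, contradicting $\int g_j\,d\gamma^j=a_j>0$), while the paper argues directly that the extremal measure is nonzero and hence of nonzero energy.
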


\begin{proof}In consequence of Lemma~\ref{lemma:exist}, every extremal measure~$\gamma$
(whose existence has been ensured by Lemma~\ref{lemma:WM}) is
nonzero; hence, due to the strict positive definiteness of the
kernel, $\|\gamma\|^2\ne0$. When combined with~(\ref{extremal}),
this yields $\|\mathcal
E(\mathbf{A},\mathbf{a},\mathbf{g})\|^2\ne0$, as was to be proved.
\end{proof}

\section{Proof of Theorems~\ref{exist} and~\ref{cor:cont}}\label{sec:proof.th.str}

Basing on the results of Sec.~\ref{sec:extremal}, we are now in a
position to complete the proofs of Theorems~\ref{exist}
and~\ref{cor:cont}.

Assume that $\kappa$, $\mathbf A$, $\mathbf a$, and $\mathbf g$
satisfy all the restrictions of Theorem~\ref{exist}. Fix an
arbitrary $\gamma\in\mathfrak
E(\mathbf{A},\mathbf{a},\mathbf{g})$~--- it exists due
to~Lemma~\ref{lemma:WM},~(i). Then, by Lemma~\ref{lemma:exist},
(\ref{24})~holds for every $i\in I$ and, consequently,
$\gamma\in\mathcal E(\mathbf{A},\mathbf{a},\mathbf{g})$. In view
of~(\ref{extremal}), this shows that $\gamma$ is actually a
minimizer in the $\mathcal
E(\mathbf{A},\mathbf{a},\mathbf{g})$-problem. Hence, the $\mathcal
E(\mathbf{A},\mathbf{a},\mathbf{g})$-problem is solvable and,
moreover, $\mathfrak
E(\mathbf{A},\mathbf{a},\mathbf{g})\subset\mathfrak
S(\mathbf{A},\mathbf{a},\mathbf{g})$. When combined with~(\ref{WS}),
this gives $\mathfrak E(\mathbf{A},\mathbf{a},\mathbf{g})=\mathfrak
S(\mathbf{A},\mathbf{a},\mathbf{g})$. Therefore
Lemma~\ref{lemma:WM},~(iii) yields the $\mathbf A$-vague compactness
of $\mathfrak S(\mathbf{A},\mathbf{a},\mathbf{g})$, and the proof of
Theorem~\ref{exist} is complete. Because of~(\ref{WM}), the last
identity also implies
\begin{equation}\label{SWM}\mathfrak S(\mathbf{A},\mathbf{a},\mathbf{g})=
\mathfrak E(\mathbf{A},\mathbf{a},\mathbf{g})= \mathcal
M(\mathbf{A},\mathbf{a},\mathbf{g}).
\end{equation}

To prove Theorem~\ref{cor:cont}, for every
$\mathbf{K}\in\{\mathbf{K}\}_{\mathbf{A}}$ fix an arbitrary
$\lambda_{\mathbf{K}}\in\mathfrak
S(\mathbf{K},\mathbf{a},\mathbf{g})$~--- its existence has been
ensured by Corollary~\ref{cor:exist}. According to
Lemma~\ref{lemma.cont},
\begin{equation}\label{kmin}
(\lambda_{\mathbf{K}})_{\mathbf{K}\in\{\mathbf{K}\}_{\mathbf{A}}}\in\mathbb
M(\mathbf{A},\mathbf{a},\mathbf{g}).\end{equation} Therefore,
by~(\ref{SWM}), every $\mathbf A$-vague cluster point
of~$(\lambda_{\mathbf{K}})_{\mathbf{K}\in\{\mathbf{K}\}_{\mathbf{A}}}$
is an element of~$\mathfrak S(\mathbf{A},\mathbf{a},\mathbf{g})$,
which is a part of the desired conclusion.

What is left is to show that
$\lambda_{\mathbf{K}}\to\lambda_{\mathbf{A}}$ strongly, where
$\lambda_{\mathbf{A}}\in\mathfrak
S(\mathbf{A},\mathbf{a},\mathbf{g})$ is arbitrarily given, but this
is obtained directly from~(\ref{SWM}), (\ref{kmin}), and
Lemma~\ref{lemma:WM},~(ii).

\bigskip
\flushleft

{\small Institute of Mathematics\\
National Academy of Sciences of Ukraine\\
3 Tereshchenkivska Str.\\
01601, Kyiv-4, Ukraine\\
e-mail: natalia.zorii@gmail.com}

\end{document}